\newcommand{\ad}[1]{{\operatorname{ad}#1}} 
\newcommand{\TryPackage}[3]{\IfFileExists{#1.sty}{\usepackage{#1}#2}{#3}
}
\newcommand{\ep}{\epsilon}
\newcommand{\bbi}{{{\bf i}}}
\newcommand{\bbj}{{{\bf j}}}
\newcommand{\bbk}{{{\bf k}}}
\newcommand{\ZZ}{{\mathbb Z}}
\newcommand{\RR}{{\mathbb R}}
\newcommand{\CC}{{\mathbb C}}
\newcommand{\RN}[1]{%
  \textup{\uppercase\expandafter{\romannumeral#1}}%
}
\newcommand{\Hom}{\operatorname{Hom}}
\newcommand{\nat}{\natural}
\newcommand{\Int}{\operatorname{Int}}
\newcommand{\Stab}{\operatorname{Stab}}
\newcommand{\Real}{\operatorname{Re}}
\newcommand{\punctures}{{\{a_i,b_i\}_{i=1}^n}}
\theoremstyle{definition}
\newtheorem{df}{Definition}[section]
\theoremstyle{plain}
\newtheorem{thm}[df]{Theorem}
\newtheorem{cor}[df]{Corollary}
\newtheorem{lem}[df]{Lemma}
\newtheorem{prop}[df]{Proposition}
\date{}
\thanks{ }
\author{Christopher M. Herald}
\address{Department of Mathematics and Statistics,   University of Nevada, Reno, Reno, NV 89557} 
\email{herald@unr.edu}
 \author{Paul Kirk}
\address{Department of Mathematics, Indiana University, Bloomington, IN 47405} 
\email{pkirk@indiana.edu}
\subjclass[2010]{Primary 57M27, 57R58, 53D40 ; Secondary 81T13} 
\keywords{holonomy perturbation,   character variety, Hamiltonian twist flow, Floer homology}
\begin{document}

\title[Holonomy perturbations in a cylinder and regularity]{Holonomy perturbations in a cylinder, and regularity for traceless SU(2) character varieties of tangles}

\thanks{The second author thanks the Simons Foundation for its support through the collaboration grant 278714 and the Max-Planck-Institut f\"ur Mathematik for its support during the Fall 2015 semester.}

\begin{abstract}The traceless $SU(2)$ character variety $R(S^2,\{a_i,b_i\}_{i=1}^n)$ of a $2n$-punctured 2-sphere
 is the symplectic reduction of a Hamiltonian $n$-torus action on the $SU(2)$ character variety of a closed surface of genus $n$. It is stratified with a finite singular stratum and a top smooth symplectic stratum of dimension $4n-6$.

For generic holonomy perturbations $\pi$, the traceless $SU(2)$ character variety $R_\pi(Y,L)$ of an $n$-stranded tangle $L$ in a homology 3-ball $Y$ is stratified with a finite singular  stratum and top stratum a smooth manifold. The restriction to  $R(S^2, 2n)$ is a Lagrangian immersion which preserves the   cone neighborhood   structure near the singular stratum.

For generic holonomy perturbations $\pi$, the variant $R_\pi^\natural(Y,L)$, obtained by taking the connected sum of $L$ with a Hopf link and considering $SO(3)$ representations with $w_2$ supported near the extra component, is a smooth compact manifold without boundary of dimension $2n-3$, which Lagrangian immerses into the smooth stratum of $R(S^2,\{a_i,b_i\}_{i=1}^n)$.

The proofs of these assertions consist of stratified transversality  arguments to eliminate non-generic strata in the character variety and to insure that the restriction map to the boundary character variety is also generic.   

The main tool   introduced  to establish abundance of holonomy perturbations is the use of holonomy perturbations along curves $C$ in a cylinder $F\times I$, where $F$ is a closed surface. When $C$ is obtained by pushing an embedded curve on $F$ into the cylinder, we prove that the corresponding holonomy perturbation induces  one of Goldman's generalized Hamiltonian twist flows    on the $SU(2)$ character variety $\mathcal{M}(F)$ associated to the curve $C$. 

\end{abstract}

 \maketitle
 
 \section{Introduction}
   The symplectic properties of character varieties of surfaces has been studied extensively, beginning with the work of  Atiyah-Bott \cite{AB} and Goldman \cite{goldman1}. Moreover, when a 3-dimensional manifold $Y$ has boundary surface $F$,   the character variety of $X$ gives rise to a Lagrangian submanifold in the character variety of $F$, although there are some issues with singularities complicating this picture.  In this paper we   establish the analogous symplectic properties  in the more challenging setting  of the  $SU(2)$   traceless character varieties of   a codimension two pair $(M,L)$, where $M$ is a homology 3-ball and $L$ is an  $n$-tangle, and its boundary codimension two pair $(\partial M, \partial L)=(S^2,\punctures)$. 
   
 We remind the reader that the traceless character variety $R(M,L)$ of a codimension 2 pair $(M,L)$ is the real-algebraic variety of conjugacy classes of $SU(2)$ representations of $\pi_1(M\setminus L)$ which send every meridian of $L$ to a traceless matrix (see Section \ref{background}). When $M$ is a 3-manifold,  a  variant $R^\nat(M,L)$ is defined  roughly by replacing   $L$ by its connected sum with a Hopf link (see Section \ref{piercedear}). The construction of $R^\nat$ was introduced in \cite{KM1} as a means to   ensure  that critical set of the Chern-Simons function is disjoint from the set of points with non-trivial stabilizer under the action of the group of gauge transformations. Finally, $R_\pi(M,L)$  and  $R_\pi^\nat(M,L)$ 
 denote the holonomy perturbed versions of these traceless character varieties (see Section \ref{background}).

 \medskip

The main results of this paper are Theorems B and C, stated below.   To explain the statements of these theorems, we first describe   another important  result of this article, one which only involves  character varieties of surfaces, and which should be of independent interest.      It is well known (\cite{goldman1}) that the character variety of a closed surface  is a stratified symplectic space.   Removing the $n$ handles from a genus $n$ closed surface $F$ produces a $2n$-punctured sphere.  We prove that  the traceless character variety of the $2n$-punctured 2-sphere 
is a symplectic reduction of the character variety of $F$.   
 We state the theorem somewhat imprecisely here and refer to Theorem \ref{surj} for a more careful statement.

\medskip

 \noindent{\bf Theorem A. }{\em 
 Let $F$ be a closed, oriented surface of genus $n$, and $S_0\subset F$ be the $2n$-punctured sphere obtained by removing tubular neighborhoods of $n$ disjoint essential curves in $F$.   
 Let $\mathcal{M}(F)$ denote the variety of conjugacy classes of $SU(2)$ representations of $\pi_1(F)$ and $R(S^2,\{a_i,b_i\}_{i=1}^n)$ the variety of conjugacy classes of $SU(2)$ representations of $S_0$ which send the boundary circles to traceless matrices. 
 
 Then there is a Hamiltonian $n$-torus action on   (an open subset of)  $\mathcal{M}(F)$ with moment  map $\mu:\mathcal{M}(F)\to \RR^n$, for which the  symplectic quotient  is $R(S^2,\{a_i,b_i\}_{i=1}^n)$. 
 }

 \medskip
 
We now can state our first main result, Theorem \ref{thm1},  in slightly simplified form as follows.

\medskip

 \noindent{\bf Theorem B. }{\em Assume $Y$ is a $\ZZ$-homology ball containing an $n$-strand tangle $L$, with $n\geq 2$. Then $R_\pi(Y,L)$ is compact for any perturbation $\pi$.    There
 exist arbitrarily small perturbations so that 
  $R_\pi(Y,L)$ is the union of two strata
$$  R_\pi(Y,L)= 
  R_\pi(Y,L)^{\ZZ/2,\ZZ/2}\sqcup R_\pi(Y,L)^{U(1),U(1)}, $$ 
with the following properties:  
 $  R_\pi(Y,L)^{\ZZ/2,\ZZ/2}$  a smooth manifold of dimension $2n-3$, and $R_\pi(Y,L)^{U(1),U(1)}$ a finite set. Each point in $R_\pi(Y,L)^{U(1),U(1)}$ has a neighborhood in $R_\pi(Y,L)$ homeomorphic to a cone on $\CC P^{n-2}$.
 
The restriction map  $  R_\pi(Y,L) \to R(S^2,\punctures)$  takes  the 0-manifold $R_\pi(Y,L)^{U(1),U(1)}$  into the 0-manifold $R(S^2,\punctures)^{U(1)}$,  and Lagrangian immerses the $(2n-3)$-manifold $R_\pi(Y,L)^{\ZZ/2,\ZZ/2}$ in the symplectic $(4n-6)$-manifold $R(S^2,\punctures)^{\ZZ/2}$.}

\medskip
Our second main result is  Theorem \ref{earthm}, the  analogue of Theorem B for $R^\nat(Y,L)$.   Theorem \ref{earthm} states the following.

\medskip

  \noindent{\bf Theorem C. }{\em Assume $Y$ is a $\ZZ$-homology ball containing an $n$-strand tangle $L$, with $n\geq 2$. Then $R^\nat_\pi(Y,L)$ is compact for any perturbation $\pi$.    There
 exist arbitrarily small perturbations so that 
  $R^\nat_\pi(Y,L)$ is   a smooth manifold of dimension $2n-3$, and the restriction map  $  R_\pi^\nat(Y,L) \to R(S^2,\punctures)$   is a Lagrangian immersion into the smooth stratum.}

 \medskip
 
Theorems B and C are  satisfying   results, in that they shows that although $   R(S^2,\punctures)$  and $  R_\pi(Y,L)  $ are not smooth symplectic (resp. Lagrangian immersed) manifolds,  they are the next best thing; namely, their singular strata are  finite  sets,   and the restriction map is stratum preserving.      In the case of the variant $R^\nat_\pi(Y,L)$, the situation is even nicer.  Theorem C says that $R^\nat_\pi(Y,L)$ is generically a smooth manifold which immerses into the top (smooth) stratum of $R(S^2\punctures)$.

\medskip

 When $n=2$, the results are simpler to state but still of considerable interest.  In \cite{HHK1,HHK2} the authors developed a Lagrangian-Floer theory for certain immersed curves in   the 2-dimensional variety   $R(S^2,\{a_1,b_1,a_2,b_2\})$, a space also known as the {\em pillowcase}. It is  a 2-sphere with four orbifold points obtained as the quotient of the 2-torus by the hyperelliptic involution.  
 
As a corollary of Theorems B and C when $n=2$, one has the following.

\medskip

 \noindent{\bf Corollary D. }{\em Given a 2-tangle in a homology 3-ball, there exist arbitrarily small holonomy perturbations $\pi$ so that $R_\pi(Y,L)$ is a compact 1-manifold with 2 boundary components and the restriction map $R_\pi(Y,L)\to R(S^2,\{a_1,b_1,a_2,b_2\})$ is an immersion taking the boundary points to the orbifold points of the pillowcase, and immersing the interior into the complement of the four orbifold points. 

Similarly, there exist arbitrarily small holonomy perturbations $\pi$ so that $R_\pi^\nat (Y,L)$ is a compact 1-manifold without boundary, and the restriction map $R_\pi(Y,L)\to R(S^2,\{a_1,b_1,a_2,b_2\})$ is an immersion which misses the four orbifold points. 
 }
 
  \medskip

 As explained in \cite{HHK2}, Corollary D is nearly sufficient in the case of $n=2$  to define a Lagrangian-Floer theory associated to 2-tangle decompositions of links.    The only remaining requirement is that the immersions be unobstructed, in the sense of \cite{FOOO}, but we defer the unobstructedness property to a future article.  
 
  In \cite{HHK2}, many examples were found where   $R_\pi(Y_1,L_1)$  and $R_\pi^\nat(Y_2, L_2)$  are transversely immersed  unobstructed  1-manifolds in the pillowcase, for which the Lagrangian-Floer theory applies to construct what we call the {\em pillowcase homology} of the tangle decomposition. In all  those  calculations, the pillowcase homology  agrees with known or conjectured calculations of reduced singular instanton homology.

 \medskip
 
 We expect that a similar Lagrangian-Floer theory can be constructed for all $n\ge 2$. 
 More precisely, our goal is to produce a tangle-theoretic counterpart to the (reduced) instanton knot Floer homology defined by Kronheimer-Mrowka \cite{KM1, KM2} for knots and links in 3-manifolds in terms of traceless character varieties.  We propose to take the Lagrangian-Floer homology of the Lagrangians $R_\pi(Y_1,L_1)$ and $R^\nat_\pi(Y_2, L_2)$ in the symplectic variety $R(S^2,\punctures)$.     Theorems B and C provide a guarantee   that traceless character varieties do indeed give rise to an Lagrangian intersection picture, after generic small perturbations.

\medskip

We briefly  outline of the proofs of  Theorems B and C.  
Consider a pair $(Y,L)$, where $L$ is an $n$-strand tangle in a 3-manifold $Y$ with 2-sphere boundary.  Denote by $X$ the complement of a tubular neighborhood of $L$ in $Y$ and by $F$ the boundary of $X$, a closed genus $n$ surface.  Set $S_0=\partial Y \setminus nbd(L)$.   Then the differential of the restriction map on $SU(2)$ character varieties from  $\mathcal{M}(X)$  to  $\mathcal{M}(F)$ has  Lagrangian image at each point.

 It is a general property of symplectic reduction that if $M$ is a  symplectic manifold with Hamiltonian $G$ action and moment map $\mu:M\to g^*$, and $i:L\to M$ is a Lagrangian immersion   which is transverse to the level set $\mu^{-1}(0)$, then $L\cap \mu^{-1}(0)$ Lagrangian immerses to the symplectic quotient. 
Therefore, if   the restriction $j:\mathcal M(X) \to \mathcal M(F)$ is a Lagrangian immersion transverse to $\mu^{-1}(0)$, for $\mu$  the    moment map of Theorem A, then $R(Y,L)=(\mu\circ j)^{-1}(0)$ Lagrangian immerses into the symplectic quotient $R(S^2,\punctures)$.

In general,  $j:\mathcal M(X) \to \mathcal M(F)$ need not be a Lagrangian immersion, even on its top stratum.  For example,  it is well known that the presence of incompressible surfaces in $X$ increases the dimension of $\mathcal{M}(X)$.  Moreover, even when  $j:\mathcal M(X) \to \mathcal M(F)$ is an immersion, it  need not be transverse to $\mu^{-1}(0)$.  

Fixing up $\mathcal{M}(X)$ and its restriction to $\mathcal{M}(F)$ in a manner consistent with the symplectic structure and compatible with the  perturbations of the Chern-Simons functional used to construct instanton Floer theory is accomplished by means of holonomy perturbations $\pi$. We first appeal to the results of \cite{herald1} to fix $\mathcal{M}(X)$; this prepares us for  the delicate part of  the argument, namely establishing the existence of arbitrarily small holonomy perturbations  $\pi$  making the restriction map $\mathcal{M}_\pi(X)\to \mathcal{M}(F)$ transverse to $\mu^{-1}(0)$ in a stratum preserving sense.

   Let $A_1,\dots, A_n$ be simple closed curves in the $2n$-punctured 2-sphere $S_0$ which form meridians to the $n$ components of $L$, that is, they are boundary curves to half the punctures. These form a half symplectic basis for $H_1(F)$, where $F$ is the closed surface obtained by adding $n$ handles to the boundary circles of $S_0$.   
The curves $A_1,\dots, A_n$ determine a function 
 $\mu:\mathcal{M}(F)\to \RR^n$ on the $SU(2)$ character variety of $F$ (Definition \ref{mumap}), essentially by taking a character around  each $A_i$.     Results of \cite{goldman2}  and \cite{Jeffrey-Weitsman}  are used to show
 that $\mu$ is (essentially) the moment map for a Hamiltonian $(S^1)^n$ action on $\mathcal{M}(F)$ with symplectic quotient $\mu^{-1}(0)/(S^1)^n$
 (essentially) the traceless character variety $R(S^2,\punctures)$.
 
 The results of \cite{herald1} show that, after appropriate holonomy perturbations $\pi$, the restriction map $j:\mathcal{M}_\pi(X)\to \mathcal{M}(F)$ is a Lagrangian immersion. We show that  (with further  perturbation) $j$ can be made transverse to $\mu^{-1}(0)$, and 
 $R_\pi(Y,L)$ is identified with $\mathcal{M}_\pi(X)\cap j^{-1} \left( \mu^{-1}(0)\right) $. Symplectic reduction then implies that the composite $R_\pi(Y,L)\to 
\mu^{-1}(0)/(S^1)^n=R(S^2,\punctures)$ is again a Lagrangian immersion.

  The proof of Theorem C is similar, but role of the map called $\mu$ above, the $n$-tuple of traces, is modified slightly to also include the anticommutativity condition between the earring meridian and the meridian of the strand about which the earring has been added.

\medskip
The difficulties in carrying out this outline arise in dealing with the parenthetical comments in the previous paragraphs.  The character varieties are not manifolds, but rather are stratified spaces, and one must work stratum-by-stratum to ensure the entire perturbed character variety has the appropriate structure after suitable holonomy perturbations. One  needs to perturb so that $j$ is transverse to $\mu^{-1}(0)$.  Hence much of the technical work consists of establishing that holonomy perturbations are sufficiently abundant to ensure that transversality holds in a stratified sense.

 In order to achieve transversality with $\mu^{-1}(0)$, we use perturbations supported in  a cylinder $F\times [0,1]$, so we examine  the homeomorphisms of the character variety $\Phi_{\pi}:\mathcal{M}(F)\to \mathcal{M}(F)$ induced by  holonomy perturbations $\pi$ along embedded curves pushed in from $F\times \{0\}$.   Perhaps surprisingly, these perturbations give rise to well known Hamiltonian isotopies of $\mathcal{M}(F)$. Namely, Theorem \ref{hamiltonian}  identifies these isotopies with the twist flows on flat moduli spaces of surfaces discovered by Goldman \cite{goldman1}. The statement is as follows. We refer the reader to Section \ref{sec2} for the construction of the maps $\Phi_{\pi}$.

\medskip
\noindent{\bf Theorem E.} {\em  Let $\pi_{C,t}$ be the 1-parameter family of holonomy perturbations $\pi_{C,t}=(N_C, t \phi)$ where $C\subset F$ is an embedded curve  and $\phi:\RR\to \RR $ a perturbation function.  
Let   $\Phi_{\pi_{C,t}}:\mathcal{M}(F)\to \mathcal{M}(F)$ be the corresponding isotopy. Then $\Phi_{\pi_{C,t}}$ restricts to a Hamiltonian isotopy on the smooth  stratum  $\mathcal{M}(F)^{\ZZ/2}$. In fact, $\Phi_{\pi_{C,t}}$ is equal to Goldman's Hamiltonian twist flow associated to $C$,  generated by the function 
$$f_C:\mathcal{M}(F)\to R, ~f_C([\rho])=\psi(\cos^{-1}\Real(\rho(C))),$$ for $\psi$ an antiderivative of $\phi$.
}

  \medskip
  
 The authors thank Lisa Jeffrey for discussions critical to the proof of Theorem A. They also thank Matthew Hedden, Henry Horton and Dan Ramras for illuminating discussions. 
 
\section{Character varieties,   perturbations, and stabilizers}\label{background}

Identify $SU(2)$ with the group of unit quaternions, and the Lie algebra $su(2)$ with the span of $\{\bbi,\bbj,\bbk\}$.  Every unit quaternion can be written in the form $e^{\alpha P}=\cos\alpha+\sin \alpha P$ for $P$ a purely imaginary unit quaternion $P$;  this description is unique, for unit quaternions different than $\pm 1$,  if we choose $0<\alpha < \pi$. Here, unit vectors in the Lie algebra correspond to purely imaginary quaternions of length one with respect to the positive definite inner product  
$$\langle v,w\rangle=-\Real (vw).$$   The function $\Real:SU(2)\to \RR$ on  unit quaternions 
corresponds to one half the trace on $SU(2)$ matrices.  Its point preimages are precisely the conjugacy classes in $SU(2)$.

\bigskip

Given a compact  2- or 3-manifold $M$, we    use the notation $\mathcal{M}(M)$ for  the space  of conjugacy classes of $SU(2)$ representations of $\pi_1(M)$,
$$ \mathcal{M}(M) =\Hom(\pi_1(M),SU(2))/_{\text{\rm conj}}  $$
and call $\mathcal{M}(M)$ the {\em character variety} of $M$.
Given a  properly embedded codimension two submanifold $L\subset M$, we call an element of $\pi_1(M\setminus L)$ a {\em meridian} if it is freely homotopic in $M\setminus L$ to the boundary of a 2-disk hitting $L$ transversely once.  We define a {\em traceless representation of $\pi_1(M\setminus L)$} to be an $SU(2)$ representation  which satisfies the following condition:  
\begin{equation}
\label{traceless}
\text{For each meridian } m\in \pi_1(M\setminus L), ~~~\Real(\rho(m))=0 \end{equation}
 We denote by $R(M,L)$  the space of 
conjugacy classes of traceless  representations of $\pi_1(M\setminus L)$: 
 \begin{equation}\label{RML}
 R(M,L) = \{\rho\in\Hom(\pi_1(M\setminus L),SU(2))~|~\rho \text{~satisfies~} (\ref{traceless}) \}/_{\text{\rm conj}}  
\end{equation}
and call $R(M,L)$ the {\em traceless character variety} of $(M,L)$.  Note that the condition (\ref{traceless}) is conjugation invariant, so it is not important how these meridians are connected to the chosen base point (in order to view them as representing elements of $\pi_1$).  

\medskip

We will need to use holonomy-perturbed versions of these varieties when $M$ is an oriented  3-manifold.     Fix $k>3\operatorname{genus} (\partial M )$.  Denote by $\mathcal{X}$ the Banach space of {\em perturbation functions}
\begin{equation}
\label{pertfun}\mathcal{X}=\{f:\RR\to \RR~|~\text{ $f$ is   $C^k$, odd, $2\pi$-periodic}\}
\end{equation}
Each $f$ defines a conjugation equivariant function 
$F:SU(2) \to SU(2)$ by 
\begin{equation}
\label{conjinv}F(e^{\alpha Q}) = e^{f(\alpha) Q}.
\end{equation}

Given a 3-manifold $M$, define {\em perturbation data, $\pi=\{( N_i , f_i)\}_{i=1}^p $,  for $M$} to be  a finite collection of disjoint  orientation preserving embeddings $N_i:S^1\times D^2\subset \Int(M)$, and for each embedding, a choice $f_i\in \mathcal{X}$.   We call the collection of solid tori 
$\sqcup_i(N_i(S^1\times D^2))$ the {\em support} of the perturbation $\pi$, and abbreviate it to $\sqcup_i N_i$.

 Define a {\em $\pi$-perturbed representation of $M$} to be  a representation 
 $\rho:\pi_1( M\setminus  (\sqcup_i N_i))\to SU(2)$ which satisfies the {\em perturbation condition:} 
 \begin{equation}\label{pert}
  \rho(\mu_i)=F_i(\rho(\lambda_i)), ~~i=1, \dots, p,    
\end{equation}
where $\mu_i=N_i( \{1\}\times \partial D^2)$ and $\lambda_i=N_i(S^1\times \{1\})$, and $F_i$ is associated to $f_i$ as in Equation (\ref{conjinv}).
Like condition (\ref{traceless}), condition (\ref{pert}) is conjugation independent and hence is independent of the choice of path from $N_i(1,1)$ to the base point used to define $\pi_1( M\setminus  (\sqcup_i N_i))$.  

We denote by $\mathcal{M}_\pi(M)$ the perturbed character variety:
$$\mathcal{M}_\pi(M)=\{\rho\in \Hom(\pi_1(M\setminus(\sqcup_iN_i)),SU(2))~|~ \rho \text{ satisfies } (\ref{pert})\}/_{\text{\rm conj}}.
$$
Similarly, if $M$ contains a properly embedded codimension two submanifold $L$ and the embeddings $N_i$ miss $L$, then we denote by $R_\pi(M,L)$ the space of conjugacy classes of $\pi$-perturbed traceless representations.
$$R_\pi(M,L)=
\{\rho\in \Hom(\pi_1\big(M\setminus(L\cup(\sqcup_iN_i))\big),SU(2))~|~ \rho \text{ satisfies } (\ref{traceless}) \text{~and~}(\ref{pert})\}/_{\text{\rm conj}}.
$$    
If $f_i=0$ for all $i$, then $\mathcal{M}_\pi(M)$ and   $R_\pi(M,L)$ are naturally identified with $  \mathcal{M}(M)$ and  $R(M,L)$, respectively.

\medskip

For an illustration of the effect perturbations have on traceless character varieties,  we offer the reader the following instructive examples. In \cite[Section 11.6]{HHK2} the space $R(Y,L)$ for a certain 2-tangle in a 3-ball associated to the $(3,4)$ torus knot is identified, it is a singular real algebraic variety homeomorphic to the letter $\phi$. A 1-parameter family of perturbations $\pi_t$ is described, so that $R_{\pi_t}(Y,L)$ is a homeomorphic to the disjoint union of an interval and a circle when $t\ne 0$.   The reader should keep this example in mind when trying to understand the statement of Theorem \ref{thm1}.   In particular, neighborhoods of the endpoints of the interval can be viewed as cones on $\CC P^0$.    The second example concerns the case when $T$ is the trivial 2-tangle in a 3-ball $B$. Then $R^\nat(B,T)$ is homeomorphic to a 2-sphere (see Section \ref{piercedear} for the definition of $R^\nat$, a variant of $R$), and Theorem 7.1 of \cite{HHK1}   shows that there exists a 1-parameter family of perturbations $\pi_t$ so that $R^\nat_{\pi_t}(B,T)$ is a smooth circle whenever $t\ne 0$.  This second example illustrates the content of Theorem \ref{earthm}.  In both cases, perturbations serve to break a symmetry on the unperturbed varieties: in the first case a $\ZZ/2$ symmetry on the letter $\phi$ with an arc of fixed points, and in the second case the rotational $S^1$ symmetry on $S^2$.

 \medskip
 
 If $i:Z\subset M$ is a connected subspace, then the map $i_*:\pi_1(Z)\to \pi_1(M)$ induces a restriction map $i^*:\mathcal{M} (M) \to \mathcal{M}(Z)$.    We will also call the analogous maps  $R(Y,L)\to R(\partial Y, \partial L)$ and $R_\pi(Y,L)\to R(\partial Y, \partial L)$ in the traceless context restriction maps.
 
 \medskip
 
Under the conjugation action, a representation $\rho$ has stabilizer  either isomorphic to  $\ZZ/2$,   a maximal torus $U(1)\subset SU(2)$, or the entire group $SU(2)$; we call  $\rho$ {\em irreducible},  {\em abelian},  and {\em central} in these respective cases. By this convention, a central representation is not called abelian.   

Denote by $\mathcal{M}_\pi(M)^G$ the subspace of conjugacy classes of representations with stabilizer $G$. Stabilizers determine a partition    $$\mathcal{M}(M)=\mathcal{M}(M)^{\ZZ/2}\sqcup \mathcal{M}(M)^{U(1)}
\sqcup\mathcal{M}(M)^{SU(2)}.$$
 When $M$ is a 3-manifold and $\pi$  perturbation data, one obtains a similar partition of $\mathcal{M}_\pi(M)$. 
If $(M,L)$ is a codimension two proper pair with $L$ nonempty,  a traceless representation cannot be central. Hence 
$$R(M,L)=R(M,L )^{\ZZ/2}\sqcup R(M,L )^{U(1)}.$$

Restricting  to a subspace need not preserve stabilizers.  The stabilizer of the restriction of a representation $\rho$ to a subspace may be larger than  the stabilizer of $\rho$. 
For a  3-manifold $M$ with nonempty boundary we may therefore refine the partition of $\mathcal{M}(M)$ to a partition indexed by two subgroups, namely the stabilizer and the stabilizer of the restriction to the boundary.  For example, $\mathcal{M}_\pi(M)^{ \ZZ/2,SU(2) }$ denotes the subspace of conjugacy classes of $\pi$ perturbed representations which are irreducible and restrict to central representations on the boundary.    We use similar decorations on $R_\pi (M,L)$, e.g.,  $R_\pi (M,L) ^{ \ZZ/2,U(1)}$ denotes the set of traceless perturbed representations that are irreducible on $\pi_1(M\setminus L)$ but which have abelian restriction to $\pi_1(\partial M \setminus \partial L)$.

\section{Tangent spaces}\label{tangentspaces}

We remind the reader  of the relationship   between tangent spaces of character varieties and cohomology.     For any space $M$, the Zariski tangent space of  $\mathcal{M}(M)$ at a representation $\rho$ is identified with the cohomology group $H^1(M;su(2)_{\ad{\rho}})$ (see  \cite{weil}). We outline some aspects of this identification and indicate how to generalize this in the context of perturbations.    

A presentation $\langle x_1,\dots, x_n~|~ w_1,\dots w_r\rangle$ of $\pi_1(M)$ determines a {\em relation} map 
$$R:SU(2)^n\to SU(2)^r, ~~ (X_1, \dots, X_n) \mapsto (w_1(X_1, \dots, X_n), \dots, w_r(X_1, \dots, X_n))$$
so that  the assignment of an element $X_i\in SU(2)$ to each generator $x_i$ determines a representation if and only if $$R(X_1\dots, X_n)=(1,1,\dots,1)={\bf 1}.$$   This gives identifications 
$$\Hom(\pi_1(M),SU(2))=R^{-1}({\bf 1}) \text{ and } \mathcal{M}(M)=R^{-1}({\bf 1})/_{\text{\rm conj}}.$$
 
The presentation for $\pi_1(M)$ determines a 2-complex $K_M$ with one 0-cell, $n$ 1-cells, and $r$ 2-cells. 
A representation $\rho:\pi_1(M)\to SU(2)$ can be composed with the adjoint representation $\ad :SU(2)\to {\rm Aut}(su(2))$ to determine a local coefficient system    on the 2-complex $K_M$ which we denote by $\ad{\rho}$.
This data determines the cellular cochain complex for $K_M$ (see e.g. \cite{DK})  whose chain groups are given by:
$$C^0(K_M;su(2)_{\ad{\rho}})=su(2),~C^1(K_M;su(2)_{\ad{\rho}})=\operatorname{Funct} (\{x_1,\dots, x_n\},su(2))=su(2)^n, $$
and
$$C^2(K_M;su(2)_{\ad{\rho}})=\operatorname{Funct} (\{w_1,\dots, w_r\},su(2))=su(2)^r.$$
   Here, the vector space $su(2)$ is viewed a $\pi_1(M)$ module via the adjoint action.    The differential on $0$-cochains is given by 
$$d^0:C^0(K_M;su(2)_{\ad{\rho}})\to C^1(K_M;su(2)_{\ad{\rho}}),~ d^0v=((\ad{\rho}(x_1)-1)v,\dots,(\ad{\rho}(x_n)-1)v).$$

Right translation by the $n$-tuple $(X_1^{-1},\dots, X_n^{-1})\in SU(2)^n$ identifies 
$T_{(X_1, \dots, X_n)} SU(2)^n$ 
with $T_{\bf 1}SU(2)^n=su(2)^n$. The differential  $d^0$ can then be identified with the derivative at $\rho$ of the orbit map $o:SU(2)\to SU(2)^n, ~ o(g)=(\ad{\rho}(g)(X_1),\dots , \ad{\rho}(g)(X_1)).$ The tangent space to the orbit 
is therefore identified with the 1-coboundaries 
$B^1(K_M;su(2)_{\ad{\rho}}), $ and the tangent space to the stabilizer Stab$(\rho)$ is identified 
with $H^0(K_M;su(2)_{\ad{\rho}})=H^0(M;su(2)_{\ad{\rho}})$.

Similarly, the identifications of  the tangent spaces $SU(2)^n$ and $SU(2)^r$ with $su(2)^n$ and $su(2)^r$, respectively, allow us to 
identify the differential $d^1:C^1(K_M;su(2)_{\ad{\rho}})\to C^2(K_M;su(2)_{\ad{\rho}})$ with the derivative $dR_\rho$.

Hence  at smooth points of $\mathcal{M}(M)$, 
\begin{equation}
\label{andreweil}
T_\rho\mathcal{M}(M)=H^1(K_M;su(2)_{\ad{\rho}})=H^1(M;su(2)_{\ad{\rho}}).
\end{equation}
 We take Equation (\ref{andreweil})  as a definition of $T_\rho\mathcal{M}(M)$ at singular points.

\medskip

If the 2-complex determined by the presentation is aspherical,  then $$H^2(\pi_1(M);su(2)_{\ad \rho})=H^2(K_M;su(2)_{\ad{\rho}})= \operatorname{coker} dR_\rho.$$ 
If in addition $M$ is an aspherical manifold, these groups equal $H^2(M;su(2)_{\ad{\rho}})$.    For example, when $M$ is an oriented  2-manifold (other than $S^2$) and the presentation is given by a cell structure of $M$ with one 0-cell, then $H^i(M;su(2)_{\ad{\rho}})=H^i(\pi_1(M);su(2)_{
\ad{\rho}})=H^i(K_M;su(2)_{\ad{\rho}})$ for all $i$.
 In general, however, the $i$th cohomology of $M$,  $\pi_1M$ and $K_M$ for $i>1$ need not agree.

\medskip

We next indicate how to introduce perturbations into this perspective.
Suppose that $M$ is a 3-manifold with given   perturbation data $\pi=\{(N_i, f_i)\}_{i=1}^p$. Given a presentation 
$$\pi_1(M\setminus (\sqcup_i N_i))=\langle x_1,\dots, x_q~|~ w_1,\dots w_s\rangle ,$$ express the meridians $\mu_i$ and longitudes $\lambda_i$  of $N_i$ as words in the generators $x_\ell$.  Then
the relation map $R: SU(2)^q\to SU(2)^s$
can be augmented to   $$R_\pi=R\times(P_1,\dots, P_p):SU(2)^q\to SU(2)^s\times SU(2)^p$$
where  \begin{equation}
\label{pertrel}P_i(X_1,\dots, X_n) =F_i(\lambda_i(X_1,\dots, X_n)) \mu_i(X_1,\dots, X_n)^{-1}.
\end{equation}
It is easy to see that 
\begin{equation}
\label{compact}
\mathcal{M}_\pi(M)=R_\pi^{-1}({\bf 1})/_{\text{\rm conj}}.
\end{equation}
 Notice that this implies that $\mathcal{M}_\pi(M)$ is compact for any perturbation data $\pi$, since it is the quotient by $SU(2)$  of a closed subset of $SU(2)^q$.    

For $\rho\in\mathcal{M}_\pi(M)$, define 
 \begin{equation}
\label{zariski}H^1_\pi(M;su(2)_{\ad{\rho}})=\ker  (dR_\pi)_\rho /B^1(M;su(2)_{\ad{\rho}}).
\end{equation}
Then $$T_\rho\mathcal{M}_\pi(M)=H^1_\pi(M;su(2)_{\ad{\rho}}).$$ When all the perturbation functions $f_i$ are zero,  $H^1_\pi(M;su(2)_{\ad{\rho}})=H^1(M;su(2)_{\ad{\rho}})$.
One can also define the the perturbed $0$th cohomology as before: 
\begin{equation}
\label{h0pi}H^0_\pi(M;su(2)_{\ad{\rho}})=\{ v\in su(2)~|~
\ad{\rho}(x_i)(v)=v \text{ for all } x_i\}.
\end{equation}

The cellular chain complex  for the 2-complex $K_M$ associated to the presentation is not adequate to compute the second cohomology when $M$ is a 3-manifold, and the introduction of perturbations $\pi$ makes it difficult to present a clean definition of $H^2_\pi(M; su(2)_{\ad{\rho}})$ in terms of cellular chains. 
In light of these difficulties, we instead refer to \cite{herald1} for a definition of $H^i_\pi(M; su(2)_{\ad{\rho}})$ as the cohomology  of a Fredholm complex constructed by deforming the twisted de Rham complex.  To give a full definition would take us too far afield.  

We will use several facts about these groups.  First, the 0th and 1st cohomology are canonically isomorphic with the definitions (\ref{zariski}) and (\ref{h0pi}).  Second, if $\rho\in \mathcal{M}_\pi(M)$, then there is a Kuranishi map 
\begin{equation}
\label{kurpi}
K:H^1_\pi(M; su(2)_{\ad{\rho}})\to H^2_\pi(M; su(2)_{\ad{\rho}}),
\end{equation}
equivariant with respect to the action of the stabilizer ${\rm Stab}(\rho)$ of $\rho$, so that, locally near $\rho$,  
$$\mathcal{M}_\pi(M)\cong  K^{-1}(0)/{\rm Stab}(\rho).$$
 Third, if $\rho$ takes values in the diagonal circle subgroup of $SU(2)$, then the adjoint action on $su(2)$ splits equivariantly with respect to the splitting $su(2)=\RR\bbi \oplus \CC\bbj$; the action is trivial on the $\RR$ summand and weight two on the complex summand. The corresponding Fredholm complex splits accordingly, and 
\begin{equation}
\label{cohosplit}
H^i_\pi(M; su(2)_{\ad{\rho} })=H^i_\pi(M;\RR)\oplus H^1_\pi(M; \CC_{\ad{\rho}})\
\end{equation}

Finally, we will use a upper semicontinuity property of the dimensions of these cohomology groups: $\dim H^i_\pi(M; su(2)_{\ad{\rho}})\leq \dim H^i_{\pi_0}(M; su(2)_{\ad{\rho_0}})$ for all $\pi$ close enough to $\pi_0$ and for $\rho\in \mathcal{M}_\pi(M)$ close enough to $\rho_0 \in \mathcal{M}_{\pi_0}(M)$.  Again we refer to \cite{herald1} for a careful description of the topology on the space of perturbations. For our purposes it suffices to compare perturbations $\pi=\{(N_i,f_i)\}$ and $\pi'=\{(N_i',f_i')\}$ for which the embeddings coincide, i.e. $N_i=N_i'$, in which case we can measure their distance using the $C^k$ metric on the perturbation functions $f_i, f_i'\in \mathcal{X}$.  The distance between $\rho\in \mathcal{M}_\pi(M)$ and $\rho'\in \mathcal{M}_{\pi'}(M)$ can be taken to be the distance between the  $q$-tuples $(\rho(x_1),\dots, \rho(x_q))$ and $(\rho'(x_1),\dots, \rho'(x_q))$ in $SU(2)^q$ for a set of generators $x_i$ of $\pi_1(M\setminus \sqcup_i N_i)$.

 \section{Curves on a surface}\label{snakesonaplane}
 
The aim of this paper is to establish certain transversality results for the perturbed character variety for an $n$-strand tangle in a homology 3-ball. 
 The complement of a tubular neighborhood of  $n$-strand tangle in a homology 3-ball $Y$ is a 3-manifold $X$ with boundary a genus $n$ surface $F$, and $F$ is the union of $S_0=F\cap \partial Y$, a 2-sphere with $2n$ open disks removed, and $n$ cylinders.   The $2n$ boundary circles of $S_0$ are paired by the cylinders.

To keep careful track of curves on $F$ and $S_0$ and paths connecting them to a base point, we identify $S^2$ with $\RR^2$ together with a point at infinity.  
 Then $S^2$ can   be decomposed as a union of sectors $S_1,\dots, ,S_n$, ordered counterclockwise, i.e., in polar coordinates $S_\ell=\left\{(r,\theta)~\left|~ \theta\in \left[\frac{2\pi(\ell-1)}{n}, \frac{2\pi \ell} {n}\right] \right.   \right\}$.  Let $a_i,b_i$ denote a pair of points lying on the central ray of each sector, and remove a pair of small disjoint  disk neighborhoods of  each $a_i$ and $b_i$ to obtain $S_0$.  Attaching cylinders to  each pair of boundary circles  yields an oriented closed surface $F$ of genus $n$, containing the $2n$-punctured 2-sphere $S_0$.    The sector indexing should be viewed as a cyclic ordering. 
 \begin{figure}[h]
\begin{center}
\def\svgwidth{3.4in}
 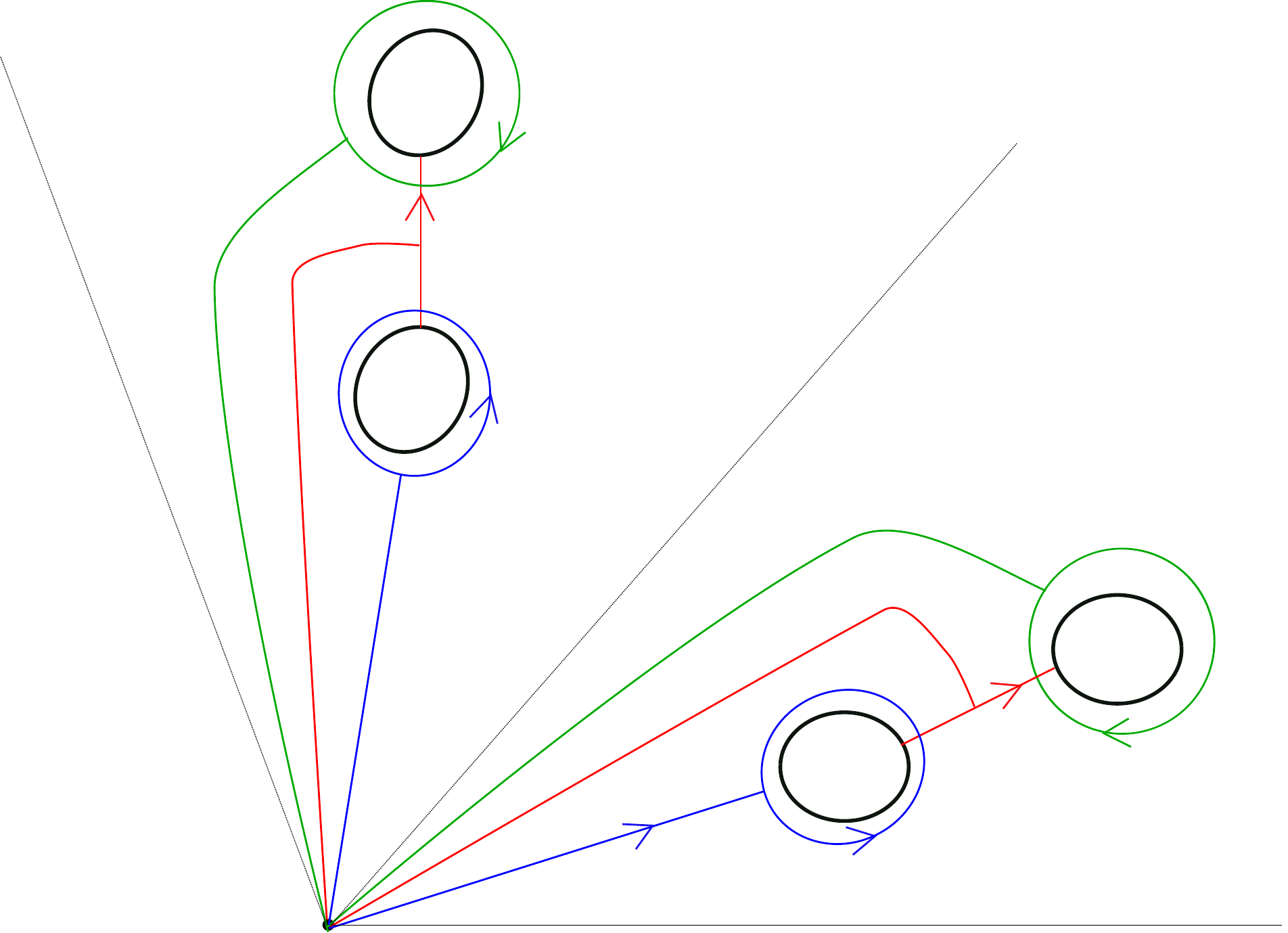
 \caption{ \label{drawingfig} }
\end{center}
\end{figure}

 Figure \ref{drawingfig}
 portrays, in the first two sectors,  embedded simple closed curves $A_i, B_i, D_i$, $i=1,2$, each with its own  arc $\gamma_{A_i}, \gamma_{B_i}, \gamma_{D_i}$ from  the central base point to the curve.   Make analogous choices in each sector.
 
 To keep the notation unencumbered, whenever we consider $A_i$ as an element of $\pi_1(S_0)$ or $\pi_1(F)$, we always mean the representative based loop $\gamma_{A_i}*A_i*\gamma_{A_i}^{-1}$, and similarly for $B_i$ and $D_i$. With these choices,
\begin{equation}
 \label{eq5}
\pi_1(S_0)=\langle A_i,B_i~|~ \textstyle \prod\limits_{i=1}^n A_iB_i^{-1}=1\rangle,
\end{equation}
\begin{equation*}
\pi_1(F)=\langle A_i, D_i~|~\textstyle \prod\limits_{i=1}^n[A_i,D_i]=1\rangle,
\end{equation*}
and the inclusion  $\pi_1(S_0)\to \pi_1(F)$ 
  is given by $$A_i\mapsto A_i, ~  B_i\mapsto D_i A_i D_i^{-1}.$$ 
 With the standard orientation of $S^2$ (represented by the standard orientation on $\RR^2 \setminus \{2n \mbox{ disks} \}$), $A_i\cdot D_i=-1$, $A_i\cdot D_j=0$ for $i\ne j$,   $A_i\cdot A_j=0=D_i\cdot D_j$.  The curves $A_i,B_i$ form meridians to the $a_i,b_i$.

\section{The structure of  the traceless character variety of the punctured sphere}
\label{pokeholesinit}

  \medskip

As in Section \ref{snakesonaplane}, let $S_0$  denote the  2-sphere with disk neighborhoods of  points $\{a_i,b_i\}_{i=1}^n$ removed, and $F$ be the closed, oriented surface  of genus $n$ obtained by attaching an annulus to  each pair of boundary curves. 
We will show that the traceless character variety $R(S^2,\punctures)$ has $2^{2n-2}$ singular points.  Moreover,  away from these singular points $R(S^2,\punctures)$ will be identified with the symplectic reduction of $\mathcal{M}(F)$
with respect to a Hamiltonian torus action.
  We remark that the results of this section are purely 2-dimensional; they do not  refer to any 3-manifold or to holonomy perturbations.

\medskip

It is well known that $\mathcal{M}(F)$ is a stratified real-algebraic variety.  In fact, the stabilizer decomposition 
\begin{equation}
\label{stabde}
 \mathcal{M}(F)=\mathcal{M}(F)^{\ZZ/2}\sqcup\mathcal{M}(F)^{U(1)}\sqcup\mathcal{M}(F)^{SU(2)}
\end{equation}
 is a decomposition into smooth symplectic  manifolds. The irreducible stratum $\mathcal{M}(F)^{\ZZ/2}$ has dimension $6n-6$ (see \cite{goldman1}),  the abelian stratum $\mathcal{M}(F)^{U(1)}$ has dimension $2n$, and the central stratum $\mathcal{M}(F)^{SU(2)}$ is a finite set containing $2^{2n}$ points (see \cite{goldman3}). Tangent spaces to these strata are identified with the  invariant subspace of the first cohomology,  
 $T_\rho\mathcal{M}(F)\cong H^1(F;su(2)_{\ad{\rho}})^{\Stab(\rho)} $,
and the 
symplectic structure on each stratum is given by the (restriction to this tangent space of the cup product composed with the inner product $\langle ~,~ \rangle:su(2)\times su(2)\to\RR $, that is,     
$$\omega :(\alpha, \beta) \in H^1(F;su(2)_{\ad{\rho}})\times H^1(F;su(2)_{\ad{\rho}})\mapsto - \Real (\alpha \cup \beta )\cap  [F] \in \RR.$$

 The traceless character variety $R(S^2,\punctures)$ has a decomposition 
\begin{equation}
\label{strat}
R(S^2,\punctures)= R(S^2,\punctures)^{\ZZ/2} \sqcup R(S^2,\punctures)^{U(1)}
\end{equation}
into the irreducible and abelian representations.

\begin{prop}\label{lin} The abelian stratum $R(S^2,\punctures)^{U(1)}$ consists of $2^{2n-2}$ points, and the irreducible stratum $R(S^2,\punctures)^{\ZZ/2}$ is a smooth manifold of dimension $4n-6$. \qed
\end{prop}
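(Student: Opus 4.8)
The plan is to analyze the traceless character variety $R(S^2,\punctures)$ directly via the presentation in Equation (\ref{eq5}) together with the traceless condition on the meridians $A_i, B_i$. A representation is specified by images $a_i = \rho(A_i)$ and $b_i = \rho(B_i)$ in $SU(2)$, subject to $\prod a_i b_i^{-1} = 1$ and $\Real(a_i) = \Real(b_i) = 0$ for all $i$. Each traceless element of $SU(2)$ is of the form $e^{(\pi/2)P}$ for a purely imaginary unit quaternion $P$; so the space of allowed $(a_i, b_i)$ before imposing the product relation is $(S^2)^{2n}$, where $S^2$ here is the unit 2-sphere in $su(2)$.

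\medskip

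First I would handle the abelian stratum. If $\rho$ is abelian, all $a_i, b_i$ lie in a common maximal torus $U(1)$; since they are traceless, each is $\pm P_0$ for a fixed purely imaginary unit $P_0$, so $a_i, b_i \in \{P_0, -P_0\}$ after conjugating. Writing $a_i = \varepsilon_i P_0$, $b_i = \eta_i P_0$ with $\varepsilon_i, \eta_i \in \{\pm 1\}$, the relation $\prod a_i b_i^{-1} = 1$ becomes $\prod \varepsilon_i\eta_i (P_0^2)^? = 1$; unwinding, $a_i b_i^{-1} = \varepsilon_i\eta_i \cdot 1$ if $\varepsilon_i = \eta_i$ and $a_ib_i^{-1} = -1$ if $\varepsilon_i \ne \eta_i$ — in all cases $a_i b_i^{-1} = \varepsilon_i \eta_i$, a central element. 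So the relation reads $\prod_i \varepsilon_i\eta_i = 1$. Modulo simultaneous conjugation (which does not change the $\varepsilon_i, \eta_i$ up to the global sign ambiguity $P_0 \mapsto -P_0$, i.e. $(\varepsilon_i, \eta_i) \mapsto (-\varepsilon_i, -\eta_i)$ for all $i$ at once), the number of abelian conjugacy classes is the number of choices of $2n$ signs $(\varepsilon_i, \eta_i)$ with even number of sign-switches $\varepsilon_i \ne \eta_i$, divided by the global involution. The count of sign tuples with $\prod \varepsilon_i\eta_i = 1$ is $2^{2n-1}$, and the free $\ZZ/2$ action halves this to $2^{2n-2}$. (One must check the global involution acts freely on this set, which it does since it flips all $2n$ signs and $2n \geq 1$.) This gives the first assertion.

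\medskip

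Next, for the irreducible stratum, I would compute the dimension via the tangent space formula $T_\rho R(S^2,\punctures)^{\ZZ/2} = H^1(S_0; su(2)_{\ad\rho})$, cut down by the traceless constraint. Concretely, the restriction of $\ad\rho$-twisted cohomology: $\dim H^1(S_0; su(2)_{\ad\rho}) = \dim H^0 + \dim H^1 = \chi$-count. For $S_0$ a $2n$-punctured sphere, which is homotopy equivalent to a wedge of $2n-1$ circles, $\dim C^0 = 3$, $\dim C^1 = 3(2n-1)$, and for irreducible $\rho$ with no invariants, $H^0 = 0$, so $\dim H^1 = 3(2n-1) - 3 = 6n - 6$. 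Now the traceless condition is the constraint that each meridian stays traceless: the derivative of $\Real$ at a traceless element cuts out a codimension-one condition per meridian. There are $2n$ meridians $a_i, b_i$, but the product relation means one traceless condition is (generically, at irreducible points) redundant or — more carefully — one should count: the traceless character variety is the preimage under the "restrict to meridians" map of the product of $2n$ conjugacy-class 2-spheres. The expected dimension is $6n - 6 - 2n = 4n - 6$; I would verify transversality of this constraint at irreducible points (so that the actual dimension equals the expected one and the stratum is a smooth manifold) by checking that the relevant evaluation map on $H^1$ is surjective onto the sum of the normal directions — this is where being irreducible is used, to rule out unwanted coincidences, and it follows from a Poincaré–Lefschetz duality / half-lives-half-dies argument comparing $H^1(S_0)$ with $H^1(\partial S_0)$.

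\medskip

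The main obstacle I anticipate is the transversality/smoothness claim for the irreducible stratum: showing that at every irreducible traceless $\rho$ the traceless conditions cut out $R(S^2,\punctures)^{\ZZ/2}$ transversely, so there are no further singular points hiding inside the irreducible locus and the dimension count is exact. I expect this to follow from the long exact sequence of the pair $(S_0, \partial S_0)$ together with the nondegeneracy of the Goldman symplectic pairing on $H^1(S_0)$ relative to the boundary (the standard fact that $H^1(S_0) \to H^1(\partial S_0)$ has Lagrangian image, so the traceless — i.e. "tangent to the conjugacy 2-spheres" — subspace of $H^1(\partial S_0)$ pulls back to a subspace of the expected codimension). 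The abelian point-count, by contrast, is elementary combinatorics once the normal form is pinned down. I would present the argument in that order: normal form for representations, the abelian count, then the cohomological dimension count with the transversality lemma as the crux, citing \cite{goldman1} and duality for the half-dimensional image statement.
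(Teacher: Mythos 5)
Your proposal is correct, and it differs from the paper mainly in being self-contained. For the abelian count the two arguments are essentially identical: the paper normalizes $\rho(A_1)=\bbi$, leaving $2n-1$ free signs of which the relation kills half, while you keep all $2n$ signs, impose $\prod_i\varepsilon_i\eta_i=1$, and then quotient by the (free) Weyl flip $P_0\mapsto -P_0$; both give $2^{2n-2}$. For the irreducible stratum the paper offers no argument at all --- it cites \cite{Lin} and \cite{Heusener-Kroll}, or defers to Theorem \ref{surj}, where $R(S^2,\punctures)^{\ZZ/2}$ is realized as a Marsden--Weinstein quotient of $\mathcal{M}(F)$ --- whereas you cut the traceless locus directly out of the smooth $(6n-6)$-dimensional irreducible character variety of the free group $\pi_1(S_0)$ by the $2n$ trace conditions and verify transversality. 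That transversality step does go through, and in fact by the same duality computation the paper uses later inside the proof of Theorem \ref{surj}(iv): since each $\rho(A_i),\rho(B_i)\ne\pm1$, the differentials of the $2n$ trace functions are identified (up to nonzero scalars) with the restriction $H^1(S_0;su(2)_{\ad{\rho}})\to H^1(\partial S_0;su(2)_{\ad{\rho}})\cong\RR^{2n}$, whose cokernel injects into $H^2(S_0,\partial S_0;su(2)_{\ad{\rho}})\cong H_0(S_0;su(2)_{\ad{\rho}})$, and this vanishes because $\rho$ is irreducible; hence the trace map is submersive and the stratum is a smooth $(4n-6)$-manifold. What your route buys is an elementary, purely two-dimensional proof independent of the torus-action machinery; what the paper's route buys is that the same Theorem \ref{surj} also delivers the symplectic structure on the stratum, which is needed later.

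Two small repairs. First, the ``half-lives-half-dies''/Lagrangian-image fact you invoke is the wrong statement here: with $\ad{\rho}$ coefficients and $\rho$ irreducible the image of $H^1(S_0)\to H^1(\partial S_0)\cong\RR^{2n}$ is everything, not a half-dimensional subspace, and full surjectivity is exactly what transversality to the $2n$ trace directions requires (an $n$-dimensional image would be too small). So keep the long-exact-sequence-plus-duality argument and drop the Lagrangian-image remark. Second, delete the passing suggestion that one traceless condition might be redundant; none is, and your final count $6n-6-2n=4n-6$ is the correct one.
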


\begin{proof} 
 If $\rho\in R(S^2,\punctures)^{U(1)}$, then $\rho$ is conjugate to a unique representation which takes $A_1$ to $\bbi$ and each $A_i,~i=2,\dots, n$ and $B_i,~i=1,\dots ,n$ to $\pm \bbi$. Half of the resulting $2n-1$ signs satisfy the relation in  (\ref{eq5}).  This proves the first claim.
 The rest of the  proof of Proposition \ref{lin}  can be found in \cite{Lin} or \cite{Heusener-Kroll}.     It also follows from Theorem \ref{surj} below.  
 \end{proof}

The inclusion $S_0 \subset F$  
induces a restriction map
 \begin{equation}\label{rest}
r:\mathcal{M}(F)\to \mathcal{M}(S_0).
\end{equation}
The traceless character variety $R(S^2,\punctures)$ is identified with the subspace of $\mathcal{M}(S_0)$ consisting of  
representations $\rho$ satisfying the traceless condition (\ref{traceless}) around the punctures.

 The  restriction map (\ref{rest}) is neither surjective nor injective, but  the image of $r$ contains $R(S^2,\punctures)$.  To see this, note that any two traceless $SU(2)$ elements are conjugate.  Hence given any  $\rho\in R(S^2,\punctures))$, there exist $d_ 1, \dots, d_n\in SU(2), $  such that $\rho(B_i)=d_i\rho(A_i) d_i^{-1}$.  Setting $\tilde\rho(D_i)=d_i$, $\tilde \rho(A_i)=\rho (A_i)$  defines   $\tilde\rho\in\mathcal{M}(F)$ satisfying $r(\tilde\rho)=\rho$.

   \bigskip
   
 The rest of this section is devoted to the proof that
 $R(S^2,\punctures)$ is the symplectic quotient of $\mathcal{M}(F)$ by  a 
 torus action.  We define the moment map and torus action next.

\begin{df}\label{mumap} Let
  $\mu:\mathcal{M}(F)\to \RR^n$ be the map
 \begin{equation*} 
\mu(\rho)=(-\sin^{-1}(\Real(\rho(A_1))),\dots,-\sin^{-1}(\Real(\rho(A_n)))).
\end{equation*}
Notice that if $Q$ is a purely imaginary unit quaternion and $s\in [0,\pi]$, then $\Real(e^{sQ})= \cos s$, and hence $-\sin^{-1}(\Real(e^{sQ}))=s-\tfrac\pi 2$.\end{df}

\medskip
We next introduce a torus action on an open, dense subset of $\mathcal{M}(F)$. 

\begin{df}
Let $\mathcal{M}(F)_0\subset \mathcal{M}(F)$ denote the open subset of representations $\rho$ satisfying $\rho(A_i)\ne \pm1$ for each $i$. Notice that $\mu^{-1}(0)\subset \mathcal{M}(F)_0$.
Define 
\begin{equation}\label{Hmap}
H:SU(2)\setminus\{\pm 1\}\to su(2),~H(e^{s Q})=Q \text{ for } s\in(0,\pi) \text{ and } Q   \in su(2) \text{ with } \| Q \|=1.  \end{equation}
That is, $H(g)=(g-\Real(g))/\| g-\Real(g) \|$. 
Note that $H(hgh^{-1})=hH(g)h^{-1}$ for all $g,h\in SU(2)$.
We then define an   $\RR^n$ action
on  $\mathcal{M}(F)_0$ on the right by 
\begin{equation}
\label{actionjackson}
(\rho \cdot {\bf t})(A_i)= \rho (A_i)\text{ and }
(\rho \cdot {\bf t})(D_i)= \rho  (D_i)e^{t_i  H(\rho(A_i))} \text{~ for all~} i,
 \text{ where } {\bf t}=(t_1,\dots, t_n).
\end{equation}
 This action is periodic with period $2\pi$ in each factor, so it induces an action of the $n$-torus ${\bf T}^n=(S^1)^n$ on $\mathcal{M}(F)_0$.      Since  $$g e^{t_i  H(\rho(A_i))}g^{-1}=e^{t_ig  H(\rho(A_i))g^{-1}}=e^{t_i H( g\rho(A_i) g^{-1} )} ,$$   this induces an action on conjugacy classes. 
\end{df}
\bigskip

Following \cite{goldman1, Jeffrey-Weitsman}, we prove the following theorem. The authors thank Lisa Jeffrey for help  with the   argument.

\begin{thm} \label{surj}  The ${\bf T}^n$ action has the following properties:
\begin{enumerate} 
\item  \label{pa1} The restriction $r:\mathcal{M}(F)_0\to \mathcal{M}(S_0)$ induces a homeomorphism $$\mu^{-1}(0)/{\bf T}^n\cong R(S^2,\punctures).$$ 
\item  \label{pa2} 
The ${\bf T}^n$ action is free on the preimage  $r^{-1}(\mathcal{M}(S_0)^{\ZZ/2})$, and the stabilizer of points in  the preimage  $\mathcal{M}(F)^{\ZZ/2} \cap r^{-1}(\mathcal{M}(S_0)^{U(1)})$    is $S^1$.

\item  \label{pa3}  The ${\bf T}^n$ action on the smooth $(6n-6)$-dimensional irreducible stratum $\mathcal{M}(F)_0^{\ZZ/2}$ is Hamiltonian, and $\mu:\mathcal{M}(F)_0^{\ZZ/2}\to \RR^n$ is a moment map for this action.

\item  \label{pa4}  The restriction $\mu'=\mu|_{\mathcal{M}(F)^{\ZZ/2}_0\cap  r^{-1}(\mathcal{M}(S_0)^{\ZZ/2})}$ has $0\in \RR^n$ as a regular value.
 Under the identification in {\rm (i)}, the corresponding (Marsden-Weinstein)   symplectic quotient $(\mu')^{-1}(0)/{\bf T}^n$ is the smooth $(4n-6)$-dimensional symplectic manifold $R(S^2,\punctures)^{\ZZ/2}.$   \end{enumerate} 

\end{thm}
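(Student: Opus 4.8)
The plan is to prove the four assertions in order, letting (\ref{pa1}) and (\ref{pa2}) --- which are purely group-theoretic --- feed the symplectic statements (\ref{pa3}) and (\ref{pa4}). The mechanism behind everything is that, by the definition of $H$ in (\ref{Hmap}), the element $e^{t H(\rho(A_i))}$ commutes with $\rho(A_i)$; together with the two presentations of Section~\ref{snakesonaplane} this is what makes $r$ and $\mu$ interact so cleanly. For (\ref{pa1}) I would first observe that the ${\bf T}^n$ action is $r$-invariant: since $e^{t_iH(\rho(A_i))}$ centralizes $\rho(A_i)$, the element $B_i=D_iA_iD_i^{-1}$ is carried by $\rho\cdot{\bf t}$ to $\rho(D_i)\rho(A_i)\rho(D_i)^{-1}=r(\rho)(B_i)$, so $r(\rho\cdot{\bf t})=r(\rho)$ and $r$ descends to $\mu^{-1}(0)/{\bf T}^n\to\mathcal{M}(S_0)$. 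Its image lies in $R(S^2,\punctures)$ because $\mu(\rho)=0$ forces every $\rho(A_i)$, hence every $\rho(B_i)$, to be traceless; surjectivity onto $R(S^2,\punctures)$ is precisely the construction given just before Definition~\ref{mumap} (and the lift $\tilde\rho$ produced there automatically has $\mu(\tilde\rho)=0$); injectivity is the centralizer remark run backwards: if $r(\rho_1)=r(\rho_2)$ then $\rho_2(D_i)^{-1}\rho_1(D_i)$ centralizes $\rho(A_i)\ne\pm1$, hence equals some $e^{t_iH(\rho(A_i))}$, so $\rho_1=\rho_2\cdot{\bf t}$. This produces a continuous bijection; since $\mu^{-1}(0)$ is closed in the compact space $\mathcal{M}(F)$ and $R(S^2,\punctures)$ is Hausdorff, the bijection is a homeomorphism.

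For (\ref{pa2}) I would begin with the containment $r^{-1}(\mathcal{M}(S_0)^{\ZZ/2})\subseteq\mathcal{M}(F)^{\ZZ/2}$: the stabilizer of $\rho$ is the centralizer of $\rho(\pi_1F)$, which sits inside the centralizer of $\rho(\pi_1S_0)=\{\pm1\}$ when $r(\rho)$ is irreducible. Freeness on this set is then immediate, since a conjugator witnessing $[\rho\cdot{\bf t}]=[\rho]$ must be central, forcing $e^{t_iH(\rho(A_i))}=1$ for all $i$. For the $S^1$-stabilizer statement I would conjugate so $\rho(\pi_1S_0)$ lies in the diagonal circle, giving $\rho(A_i)=e^{s_i\bbi}$ with $s_i\not\equiv0\pmod\pi$; the identity $\rho(D_i)\rho(A_i)\rho(D_i)^{-1}=\rho(B_i)\in U(1)$ then forces each $\rho(D_i)$ into the normalizer $N(U(1))=U(1)\sqcup\bbj U(1)$, while irreducibility of $\rho$ puts at least one $\rho(D_i)$ in $\bbj U(1)$. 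The only conjugators fixing every $\rho(A_i)$ are $g=e^{\phi\bbi}$, and a one-line quaternion computation (using $e^{\phi\bbi}\bbj e^{\beta\bbi}e^{-\phi\bbi}=\bbj e^{(\beta-2\phi)\bbi}$) shows that such a $g$ realizes $\rho\cdot{\bf t}\sim\rho$ exactly when $t_i=0$ for the indices with $\rho(D_i)\in U(1)$ and $t_i=-2\phi\,\epsilon_i$ (with $\epsilon_i\bbi=H(\rho(A_i))$) for those with $\rho(D_i)\in\bbj U(1)$; as $\phi$ ranges over $S^1$ this sweeps out an embedded circle in ${\bf T}^n$.

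Part (\ref{pa3}) is the symplectic crux and the place I would invoke Goldman \cite{goldman2} and Jeffrey--Weitsman \cite{Jeffrey-Weitsman}. On the open subset $\mathcal{M}(F)^{\ZZ/2}_0$ of the smooth stratum each function $\theta_i(\rho)=\cos^{-1}(\Real(\rho(A_i)))$ is smooth, and Goldman's theorem identifies its Hamiltonian flow with the twist flow along the simple closed curve $A_i$. Reading the intersection data of the standard generators off Figure~\ref{drawingfig} --- $A_i$ is disjoint from $A_j,D_j$ for $j\ne i$ and meets $D_i$ once --- this twist flow fixes $\rho(A_j),\rho(D_j)$ for $j\ne i$ and sends $\rho(D_i)\mapsto\rho(D_i)e^{tH(\rho(A_i))}$, i.e.\ it is exactly the $i$-th circle of the action (\ref{actionjackson}). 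The $A_i$ being pairwise disjoint, Goldman's bracket formula gives $\{\theta_i,\theta_j\}\equiv0$, so the $n$ circle actions commute and assemble into a Hamiltonian ${\bf T}^n$ action; and since $\mu_i=-\sin^{-1}(\Real(\rho(A_i)))=\theta_i-\tfrac{\pi}{2}$, we have $d\mu_i=d\theta_i$, so $\mu$ is a moment map for it. I expect this to be the main obstacle: lining up Goldman's normalization and orientation conventions exactly with the explicit twist action (\ref{actionjackson}), and checking Poisson-commutativity on this non-generic locus, require genuine care, whereas the rest of the theorem is formal once this identification is in hand.

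Finally, for (\ref{pa4}) set $U=\mathcal{M}(F)^{\ZZ/2}_0\cap r^{-1}(\mathcal{M}(S_0)^{\ZZ/2})$, an open subset of the smooth manifold $\mathcal{M}(F)^{\ZZ/2}_0$ on which ${\bf T}^n$ acts freely by (\ref{pa2}). For a torus moment map the cokernel of $d\mu_p$ is dual to the Lie algebra of the stabilizer of $p$, so freeness makes $d\mu'$ surjective on all of $U$ and $0$ a regular value of $\mu'$. By Marsden--Weinstein, $(\mu')^{-1}(0)/{\bf T}^n$ is a smooth symplectic manifold of dimension $(6n-6)-2n=4n-6$. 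Since $(\mu')^{-1}(0)=\mu^{-1}(0)\cap r^{-1}(\mathcal{M}(S_0)^{\ZZ/2})$ (being in $\mathcal{M}(F)^{\ZZ/2}_0$ is automatic there, because $\mu^{-1}(0)\subset\mathcal{M}(F)_0$ and $r(\rho)$ irreducible forces $\rho$ irreducible), the homeomorphism of (\ref{pa1}) restricts to a bijection of this quotient onto $R(S^2,\punctures)\cap\mathcal{M}(S_0)^{\ZZ/2}=R(S^2,\punctures)^{\ZZ/2}$; I would promote it to a diffeomorphism by choosing the conjugators $d_i$ of the surjectivity construction to depend smoothly on $\rho$ locally, and then transport the Marsden--Weinstein form onto $R(S^2,\punctures)^{\ZZ/2}$, which Proposition~\ref{lin} already knows to be a $(4n-6)$-manifold.
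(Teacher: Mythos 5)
Your argument is correct and, for parts (i)--(iii), follows essentially the same route as the paper: (i) rests on the fact that $e^{t_iH(\rho(A_i))}$ centralizes $\rho(A_i)$ (one small imprecision: equality $r(\rho_1)=r(\rho_2)$ in $\mathcal{M}(S_0)$ means the restrictions are \emph{conjugate}, so you must carry a conjugator $g$ through the centralizer argument, concluding $g\rho_1g^{-1}=\rho_2\cdot{\bf t}$ rather than $\rho_1=\rho_2\cdot{\bf t}$ --- exactly as the paper does); (ii) is the same stabilizer analysis, though you run it in the concrete normal form $\rho(D_i)\in U(1)\sqcup \bbj\,U(1)$ while the paper argues with a general $g=e^{sP}$, and your observation that a conjugator must centralize the irreducible restriction $r(\rho)$ gives freeness a touch more directly; (iii) invokes the same results of Goldman (Poisson-commutativity for disjoint curves and the identification of the Hamiltonian flow of an invariant trace-type function with the twist flow), with your $\theta_i=\cos^{-1}\Real(\rho(A_i))$ differing from the paper's $h_{A_i}=-\sin^{-1}\Real(\rho(A_i))$ only by an additive constant. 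The genuine divergence is in (iv): you obtain regularity of $0$ for $\mu'$ from the general symplectic fact that the annihilator of $\operatorname{im}(d\mu_p)$ is the Lie algebra of the stabilizer of $p$, so freeness from (ii) plus the moment-map property from (iii) immediately gives surjectivity of $d\mu'$; the paper instead computes the cokernel of $d\mu$ cohomologically, via the long exact sequence of $(F,\sqcup_i A_i)$, excision, and Poincar\'e duality, reducing it to $H_0(S_0;su(2)_{\ad{r(\rho)}})=0$ for irreducible $r(\rho)$. Both are valid; your route is shorter and is presumably the "more or less well known" argument the paper alludes to, while the paper's computation is self-contained and makes explicit where irreducibility of the restriction to $S_0$ enters. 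The remaining identification of $(\mu')^{-1}(0)/{\bf T}^n$ with $R(S^2,\punctures)^{\ZZ/2}$, including smoothness via local slices/conjugators, is handled at the same level of detail as in the paper.
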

 \begin{proof}

Since $e^{t_iH(\rho(A_i))}$ commutes with $\rho(A_i)$,
$$(\rho \cdot  {\bf t})(B_i)=( \rho \cdot {\bf t})(D_iA_iD_i^{-1})=\rho(D_iA_iD_i^{-1})= \rho(B_i),$$ and so   $r:\mathcal{M}(F)_0\to \mathcal{M}(S_0)$ factors through the orbit map $\mathcal{M}(F)_0\to \mathcal{M}(F)_0/{\bf T}^n$. Conversely, if   $\rho_1, \rho_2\in \mathcal{M}(F)_0$ are  two representations whose restrictions  $r(\rho_1)$ and $r(\rho_2)$ to  $\pi_1(S_0)$ are conjugate, then there exists $g\in SU(2)$ so that
$$g\rho_1(A_i)g^{-1}=\rho_2(A_i)\text{ and }g\rho_1(D_iA_iD_i^{-1})g^{-1}=\rho_2(D_iA_iD_i^{-1}) \text{  for each $i$.}$$
  It follows that, for each $i$, $\rho_2(D_i)^{-1}g\rho_1(D_i)g^{-1} $ commutes with $ \rho_2(A_i)$ and hence  there exists  a $t_i$ so that  $\rho_2(D_i)^{-1}g\rho_1(D_i)g^{-1}  =e^{t_iH(\rho_2(A_i))}.$
Therefore, $  g\rho_1 g^{-1}=\rho_2\cdot{\bf t}$, where ${\bf t}=(t_1, \dots, t_n)$. 
We conclude that the restriction $r:\mathcal M(F)_0 \to \mathcal M (S_0)$ induces a homeomorphism  
 $\mathcal{M}(F)_0/{\bf T}^n  $ onto its image in $ \mathcal{M}(S_0)$.  
 
As was observed earlier, the image of $r:\mathcal M (F) \to \mathcal M(S_0)$ contains $R(S^2, \punctures)$.  If $\rho\in \mathcal{M}(F)$ satisfies $\mu(\rho)=0$, then $\Real(\rho(A_i))=0$ and so $\rho\in \mathcal{M}(F)_0$. Moreover, $\Real(\rho(B_i))=\Real(\rho(D_i A_iD_i^{-1}))=0$ so that $r(\rho)\in R(S^2,\punctures)$.  Hence $r$ sends $\mu^{-1}(0)$ onto $R(S^2,\punctures)$ and therefore $\mu^{-1}(0)/{\bf T}^n\cong R(S^2,\punctures)\subset \mathcal{M}(S_0).$ This proves claim (i).

\medskip
 We next show that the action is free on $r^{-1}(\mathcal{M}(S_0)^{\ZZ/2})$ and has $S^1$ stabilizer at each point in  $r^{-1}(\mathcal{M}(S_0)^{U(1)})$ $\cap \mathcal{M}(F)^{\ZZ/2}$.  
 Suppose that $\rho\in \mathcal{M}(F)_0$$\cap \mathcal{M}(F)^{\ZZ/2}$, $[{\bf t}]\in {\bf T}^n$ and $g\in SU(2)$ satisfy $\rho\cdot {\bf t}=g\rho g^{-1}$.  
If $g= \pm 1$, then
$$\rho(D_i)e^{t_iH(\rho(A_i))}=(\rho\cdot {\bf t})(D_i)=g\rho(D_i)g^{-1}=\rho(D_i)$$
 so that  each $t_i\equiv 0 \mod 2\pi $, and  hence $[{\bf t}]={\bf 1}\in {\bf T}^n$.   
 
 Assume, therefore, that $g
=e^{sP}$ with $s\in (0,\pi)$, with $P$ a unit purely imaginary quaternion. 
 Since $g \rho(A_i) g^{-1} = (\rho \cdot {\bf t}) (A_i) = A_i$,  we have $H(\rho(A_i))=\epsilon_i P$ for some 
$\ep_i\in\{\pm 1\}$.  In particular, the $\rho(A_i)$ all commute. 
In addition, for each $i$, 
 $\rho(D_i)e^{t_i\ep_iP}= e^{sP}\rho(D_i) e^{-sP}$ implies that
 $$e^{(t_i\ep_i+s)\rho(D_i)P\rho(D_i)^{-1}}=e^{sP}.$$  
Since $s\in(0,\pi)$, this is only possible (for each $i$) if either 
$$\rho(D_i)P\rho(D_i)^{-1}=P \text{ and } t_i\ep_i\equiv 0 \mod 2\pi, $$ 
 or else
  $$\rho(D_i)P\rho(D_i)^{-1}=-P \text{ and }  t_i\ep_i\equiv -2s \mod 2\pi.$$  
If the first case holds for all $i$, then $\rho$ is an abelian representation on $\pi_1(F)$,  which we have ruled out with the hypothesis that $\rho\in \mathcal{M}(F)^{\ZZ/2}$.
If for some $i$ the second case holds, then $\rho(D_i)$ must be a purely imaginary  unit quaternion orthogonal to $P$. 
This implies that $\rho(B_i)=\rho(D_i)\rho(A_i)\rho(D_i^{-1})= -\rho(A_i)$.

In either case we see that all  the $\rho(A_i)$ and $\rho(B_i)$ commute.  That is, we have shown that if there exists a $g\in SU(2)$ and ${\bf t}\in {\bf T}^n$ so that $\rho\cdot{\bf t}=g\rho g^{-1}$, then either $g=\pm 1$ and $[{\bf t}]={\bf 1} \in {\bf T}^n$ or $r(\rho)\in \mathcal{M}(S_0)^{U(1)}$.  
Hence the ${\bf T}^n$ action is free on $\mathcal{M}(F)_0\setminus r^{-1}(\mathcal{M}(S_0)^{U(1)})=r^{-1}(\mathcal{M}(S_0)^{\ZZ/2})$.

To see that the stabilizer is 1-dimensional if $r(\rho)\in \mathcal{M}(S_0)^{U(1)}$, observe that for each index $i$ so that the second case $t_i\ep_i\equiv -2s$ mod $2\pi$ holds (and there is at least one such index   if the stabilizer is nontrivial in ${\bf T}^n$), $e^{t_i\bbi}=e^{-2\ep_i s}$.    The $\ep_i$ are determined up to an overall sign by $\rho$, and hence the stabilizer is the 1-dimensional subgroup of ${\bf T}^n$ consisting of those $n$-tuples
$(e^{t_1\bbi}, \dots, e^{t_n\bbi})$ so that 
$$e^{t_i\bbi}=\begin{cases} 1& \text{ if }\rho(D_i)\rho(A_i)\rho(D_i)^{-1}=\rho(A_i)
\\ e^{\ep_it\bbi} & \text{ if } \rho(D_i)\rho(A_i)\rho(D_i)^{-1}=\rho(A_i)^{-1} .\end{cases}$$
This proves (ii).  

\medskip

We turn now to the symplectic properties.  The function $h:SU(2)\to \RR$ given by $h(g)=-\sin^{-1}(\Real(g))$, or equivalently, by $h(e^{sQ})=s-\frac\pi 2$ for a purely imaginary unit quaternion  $Q$ and $s\in [0,\pi]$, satisfies
$$\langle H(g), v\rangle =\left. \frac{d}{dt} h(ge^{tv}) \right|_{t=0} \text{ for all } g\in SU(2)\setminus\{\pm 1\} \text{ and } v\in su(2)$$
where $H$ is the function defined in Equation (\ref{Hmap}). Thus the functions $H$ and $h$ satisfy the relationship described in Section 1 of Goldman's article \cite{goldman2}  (see  Section \ref{sec2}  for more details).  For each $i=1,\dots, n$, define the function $h_{A_i}:\mathcal{M}(F)^{\ZZ/2}_0\to \RR$ 
 by $$h_{A_i}(\rho)=h(\rho(A_i)).$$ Since the $A_i$ are disjoint, \cite[Corollary 3.6]{goldman2} shows that the $h_{A_i}$ Poisson-commute.

Then \cite[Theorem 4.7]{goldman2}  shows that the Hamiltonian flow induced on $\mathcal{M}(F)^{\ZZ/2}_0$ by $h_{A_i}$ is given by 
$$t\cdot \rho (E)=\begin{cases} \rho(D_i)e^{t H(\rho(A_i))} & \text{ if } E=D_i\\ \rho(E)& \text{ if } E=A_i, \text{ or } E=A_j, B_j, j\ne i.\end{cases}$$
 This flow is $2\pi$-periodic and the corresponding  $S^1$ action is precisely that one obtained by restricting the ${\bf T}^n$ action to the $i$th factor.   Since the $h_{A_i}$ Poisson-commute, the entire ${\bf T}^n$ action is Hamiltonian. Moreover, since $\mu=(h_{A_1},\dots, h_{A_n})$, $\mu:\mathcal{M}(F)^{\ZZ/2}_0\to \RR^n$ is a moment map for the ${\bf T}^n$ action. This proves (iii).
 
 \medskip
 
To verify claim (iv), we must  check that $0$ is a regular value for the restriction of $\mu$ to $\mathcal{M}(F)^{\ZZ/2}_0\cap  r^{-1}(\mathcal{M}(S_0)^{\ZZ/2})$. This is more or less well known, but we provide an  argument here for completeness.  
 
 Pick $\rho\in \mathcal{M}(F)^{\ZZ/2}_0\cap  r^{-1}(\mathcal{M}(S_0)^{\ZZ/2})\cap \mu^{-1}(0)$.
Then for each $i$, $H^1(A_i;su(2)_{\ad{\rho}})\cong \RR$ since $\mu(\rho)=0$. The differential of $\mu$ at 
 $\rho$, $d\mu_\rho:T_\rho( \mathcal{M}(F)^{\ZZ/2})\to \RR^n$ can be identified with 
the map
$$\RR^{6n-6}\cong H^1(F;su(2)_{\ad{\rho}})\to H^1(\sqcup_i A_i;su(2)_{\ad{\rho}})\cong\RR^n,$$
The long exact sequence of the pair $(F, \sqcup_iA_i)$ identifies the cokernel with a subspace of $H^2(F,\sqcup_i A_i;su(2)_{a\ad{\rho}})$, which, by replacing $\sqcup_i A_i$ by a small neighborhood and applying excision, is isomorphic to $H^2(S_0,\partial S_0;su(2)_{\ad{r(\rho)}})$. Poincar\'e duality then identifies this with $H_0(S_0;su(2)_{\ad{r(\rho)}}),$ which vanishes because $r(\rho)\in \mathcal{M}(S_0)^{\ZZ/2}$, i.e., because $r(\rho)$ is irreducible. Hence the differential is onto.

The proof is completed by recalling that the symplectic quotient \cite{MS} by the Hamiltonian free ${\bf T}^n$ action on $\mathcal{M}(F)^{\ZZ/2}_0\cap  r^{-1}(\mathcal{M}(S_0)^{\ZZ/2})$ 
is the manifold $(\mu')^{-1}(0)/{\bf T}^n$, which we have identified with $
R(S^2,\punctures)^{\ZZ/2}.$
\end{proof}
 
\begin{cor}\label{lagreg}
Suppose that $\ell:L\to \mathcal{M}(F)_0^{\ZZ/2}\cap r^{-1}(\mathcal{M}(S_0)^{\ZZ/2})$ is a Lagrangian immersion which is transverse to $(\mu')^{-1}(0)$.  Then its symplectic reduction $L':=(\mu\circ \ell)^{-1}(0)$  Lagrangian immerses  to $R(S^2,\punctures)$.
\end{cor}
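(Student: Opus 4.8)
The plan is to invoke the general principle --- the same one used at the end of the proof of Theorem \ref{surj}(iv) --- that a Lagrangian immersion transverse to the zero level set of a moment map reduces to a Lagrangian immersion of the Marsden--Weinstein quotient, keeping careful track of the identifications of Theorem \ref{surj}. Throughout, write $N=\mathcal{M}(F)_0^{\ZZ/2}\cap r^{-1}(\mathcal{M}(S_0)^{\ZZ/2})$, an open subset of the smooth $(6n-6)$-manifold $\mathcal{M}(F)^{\ZZ/2}$, on which $\mu'=\mu|_N$ is defined.

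First I would set up the reduced space. By Theorem \ref{surj}(iv), $0$ is a regular value of $\mu'$, so $(\mu')^{-1}(0)$ is a smooth codimension-$n$ submanifold of $N$; by Theorem \ref{surj}(ii) the torus $\mathbf{T}^n$ acts freely on it, so the quotient map $\pi\colon(\mu')^{-1}(0)\to(\mu')^{-1}(0)/\mathbf{T}^n$ is a principal bundle projection onto the smooth $(4n-6)$-manifold $R(S^2,\punctures)^{\ZZ/2}$, which carries the reduced symplectic form $\bar\omega$ characterized by $\pi^*\bar\omega$ being the pullback of $\omega$ to $(\mu')^{-1}(0)$ (Theorem \ref{surj}(iii) and \cite{MS}). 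Since $\ell$ is transverse to $(\mu')^{-1}(0)$ and $\dim L=3n-3$, the set $L'=(\mu\circ\ell)^{-1}(0)=\ell^{-1}\big((\mu')^{-1}(0)\big)$ is a smooth submanifold of $L$ of dimension $(3n-3)-n=2n-3$, and I want to show that $\bar\ell:=\pi\circ\ell|_{L'}\colon L'\to R(S^2,\punctures)^{\ZZ/2}\subset R(S^2,\punctures)$ is a Lagrangian immersion.

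The crucial step --- and the one I expect to need the only real idea --- is a pointwise linear-algebra claim. Fix $p\in L'$, put $q=\ell(p)$, and let $V=d\ell_p(T_pL)$ be the Lagrangian subspace of $(T_qN,\omega)$ given by the immersion, $\mathfrak{t}_q=\ker d\pi_q\subset T_q(\mu')^{-1}(0)$ the $n$-dimensional tangent space to the orbit, and $W=d\ell_p(T_pL')=V\cap T_q(\mu')^{-1}(0)$, which transversality forces to have dimension $(3n-3)+(5n-6)-(6n-6)=2n-3$. I claim $W\cap\mathfrak{t}_q=0$. Indeed, if $v=X^{\#}_q\in W\cap\mathfrak{t}_q$ for $X$ in the Lie algebra of $\mathbf{T}^n$, then the moment-map identity says the covector $\omega(v,-)$ equals, up to sign, $d\langle\mu',X\rangle_q$, so it annihilates $T_q(\mu')^{-1}(0)=\ker d\mu'_q$; writing an arbitrary $u\in T_qN$ as $u=u_1+u_2$ with $u_1\in V$ and $u_2\in T_q(\mu')^{-1}(0)$, we get $\omega(v,u)=\omega(v,u_1)+\omega(v,u_2)=0$, the first summand vanishing because $v,u_1\in V$ and $V$ is isotropic. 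Nondegeneracy of $\omega$ forces $v=0$. Consequently $d\bar\ell_p=d\pi_q\circ d\ell_p|_{T_pL'}$ is injective, so $\bar\ell$ is an immersion.

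Finally I would check that the image is Lagrangian: for $w_1,w_2\in W$, $\bar\omega(d\pi_q w_1,d\pi_q w_2)=\omega(w_1,w_2)=0$ since $W\subset V$ is isotropic, so $d\bar\ell_p(T_pL')$ is an isotropic subspace of $T_qR(S^2,\punctures)^{\ZZ/2}$ of dimension $2n-3=\tfrac12\dim R(S^2,\punctures)^{\ZZ/2}$ (Proposition \ref{lin}), hence Lagrangian. This completes the argument. Apart from the vanishing $W\cap\mathfrak{t}_q=0$, where transversality is played off against the isotropy of $V$ and the moment-map identity, every ingredient --- the submanifold structure of $L'$, the Marsden--Weinstein reduced form, and the two dimension counts --- is immediate from Theorem \ref{surj} and routine symplectic linear algebra.
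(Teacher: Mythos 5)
Your argument is correct and follows essentially the same route as the paper, which simply cites the standard fact (from \cite{MS}) that a Lagrangian immersion meeting $(\mu')^{-1}(0)$ cleanly/transversely reduces to a Lagrangian immersion of the quotient; your pointwise claim $W\cap\mathfrak{t}_q=0$ is exactly the content of the paper's remark that transversality to the level set forces transversality to the ${\bf T}^n$-orbits. You have merely written out the routine symplectic linear algebra that the paper delegates to the reference, so no gap remains.
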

 
\begin{proof}
 This is a basic property of symplectic reduction and moment maps.    If the Lagrangian immersion $\ell$  meets $(\mu')^{-1}(0)$ cleanly, then the restriction of $\ell$ to the preimage of $(\mu')^{-1}(0)$, composed with the quotient map, is Lagrangian (see, for example, \cite{MS}).  In our case, the stronger hypothesis that $\ell$ meets $(\mu')^{-1}(0)$ transversely implies that $\ell$ also meets the orbits in  $(\mu')^{-1}(0)$ transversely, and so we obtain a Lagrangian immersion $L \cap (\ell \circ \mu')^{-1}(0) \to  R(S^2,\punctures)$. \end{proof}

 \medskip

 It will be simpler in the following to work with the map
 \begin{equation} \label{Tmap}
T:\mathcal{M}(F)\to \RR^n, ~T(\rho)=(\Real(\rho(A_1)),\dots, \Real(\rho(A_n))).
\end{equation}
rather than the moment map $\mu$ of Definition \ref{mumap}.
Although $T$ is not a moment map for the ${\bf T}^n$ action, the level sets of $T$ and $\mu$ coincide.  Furthermore, since the function $\sin^{-1}(x)$ is a diffeomorphism near $0$,  the restriction\begin{equation}
\label{Tregular}
T:\mathcal{M}(F)^{\ZZ/2}_0\cap  r^{-1}(\mathcal{M}(S_0)^{\ZZ/2})\to \RR^n
\end{equation}
has $0$ as a regular value, and 
$$R(S^2,\punctures)=T^{-1}(0)/{\bf T}^n.$$

\section{perturbation curves}\label{snakesonaplane2}

Our desired transversality results will be established with the help of a carefully constructed collection of curves on the surface $F$.  This section will be devoted to specifying these curves and tabulating how they intersect with the standard fundamental group generators.  These intersections  will be important in the analysis of the effect of perturbing using these curves (pushed slightly into the 3-manifold from the boundary).
\begin{df}\label{longitude}
  Fix two embedded, oriented, unbased, transverse curves $C$ and $E$ in $F$ missing the base point, and equip $E$ with an embedded arc $\gamma_E$ starting at the base point and ending   on $E$.      Assume that either $C$ intersects $E$ transversely in a single point and misses the arc $\gamma_E$, or that $C$ misses $E$ but intersects the arc $\gamma_E$ transversely in a single point.

  Define {\em the longitude of $C$ with respect to $E$},  $\lambda_C(E)$,  as follows.
  \begin{enumerate} 
\item In the first case, $\lambda_C(E)$ travels from the base point along $\gamma_E$, then forward along $E$ to the intersection  with $C$, then around $C$ returning to the intersection point, then backward along the same portion of $E$, and finally back to the base point along $\gamma_E$. 
\item In the second case, $\lambda_C(E)$ travels  along $\gamma_E$  from the base point to the intersection  with $C$, then around $C$ returning to the intersection point, then backward  along $\gamma_E$ to the base point. 

\end{enumerate}
      
\end{df}

\begin{df} A {\em special perturbation curve} is an embedded,  unbased curve $C$   in $F$ satisfying the condition that, for each  $E\in\{A_i,B_i\}_{i=1}^n$,  if $C \cap \left( E\cup \gamma_E\right)$ is nonempty then either $C$ meets  $E$ transversely  one point and misses $\gamma_E$, or $C$ is disjoint from $E$ but  intersects $\gamma_E$ in transversely  one point.   \end{df}

 \bigskip
 
 We now tabulate a finite collection of special perturbation curves, together with the curves in the family $\{A_i, D_i\}_{i=1}^n$ which intersect them, in Table \ref{table1}.  The first column, labeled {\em Perturbation curve}, lists 11 families of special perturbation curves,  $C_{\RN 1}(i)$ through $C_{\RN{11}}(ij)$.  These are illustrated in 
 Figures \ref{drawing3afig}, \ref{drawing3a1fig}, \ref{drawing3a2fig},    \ref{drawing3a3fig}, and  \ref{drawing3a4fig}.  
    Recall that the sector indexing should be viewed as a cyclic ordering.  In Figures \ref{drawing3a1fig}, \ref{drawing3a2fig},  \ref{drawing3a3fig}, and  \ref{drawing3a4fig} illustrate the curves we have in mind if $1\leq i < j \leq n$, but the curves in Table \ref{table1} when  $j<i$ are intended to denote the analogous curves that cross  sectors  from $i$ to $j$ in the counterclockwise direction. 
    
 The special perturbation curves $C_{\RN 1}(i),C_{\RN 2}(i), $ and $C_{\RN 3}(i)$ lie in the interior of the $i$th sector.  In particular,  they miss $A_\ell, B_\ell$ and $D_\ell$ as well as $\gamma_{A_\ell}, \gamma_{B_\ell},$ and $\gamma_{D_\ell},$ for $\ell\ne i$. The curves 
 $C_{\RN 4}(ij),  C_{\RN 5}(ij),  C_{\RN 6}(ij),$ and $  C_{\RN 7}(ij)$ miss $A_\ell, B_\ell$, and $D_\ell$ when $\ell\ne i,j$, but they do meet 
 $\gamma_{A_\ell}, \gamma_{B_\ell},$ and $\gamma_{D_\ell},$ for $\ell\ne i,j$. The curves 
 $C_{\RN 8}(ij),  C_{\RN 9}(ij),  C_{\RN{10}}(ij),$ and $  C_{\RN{11}}(ij)$ miss $A_\ell, B_\ell$, and $D_\ell$  as well as
 $\gamma_{A_\ell}, \gamma_{B_\ell},$ and $\gamma_{D_\ell},$ for $\ell\ne i,j$.

     \begin{figure}[h]
\begin{center}
\def\svgwidth{5in}
 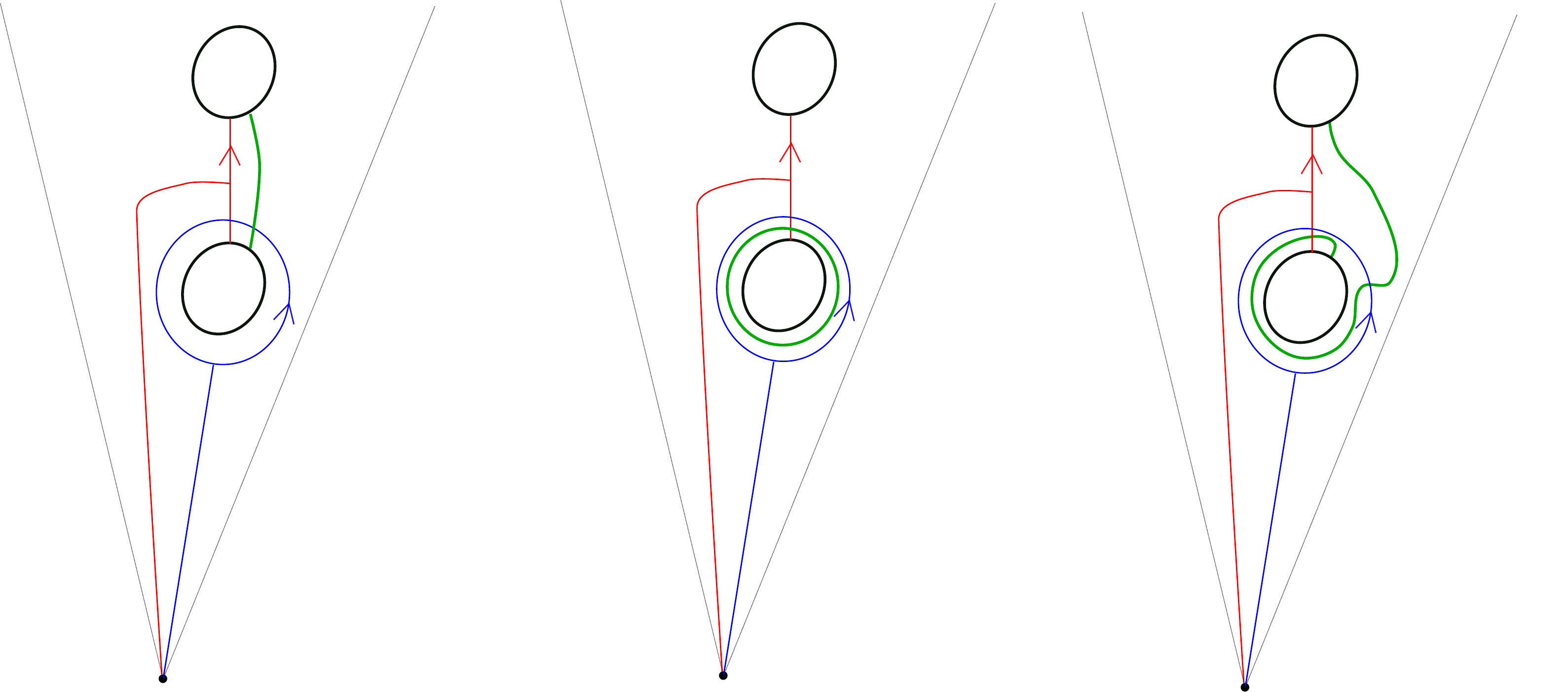
 \caption{The special perturbation curves $C_{\RN 1}(i), C_{\RN 2}(i),$ and $C_{\RN 3}(i)$ in the $i$th sector. \label{drawing3afig} }
\end{center}
\end{figure}

     \begin{figure}[h]
\begin{center}
\def\svgwidth{6in}
 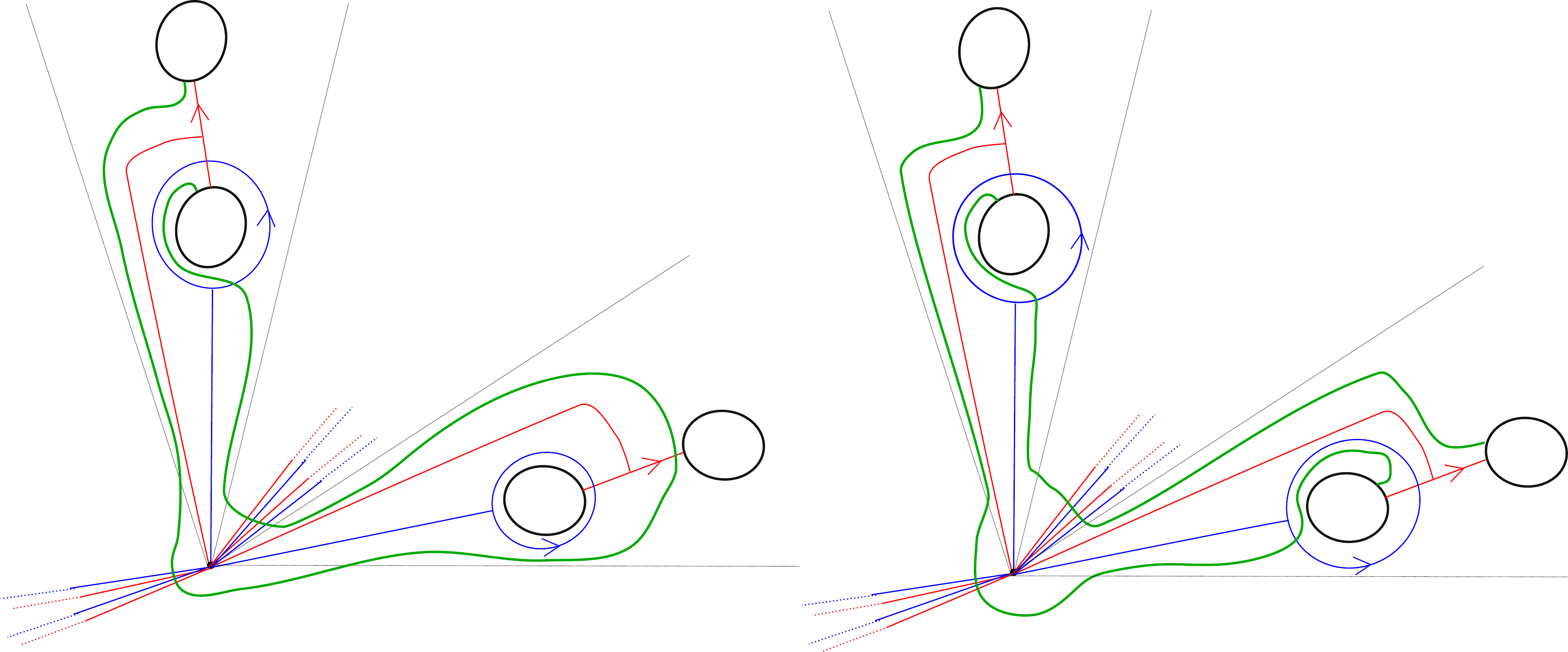
 \caption{The  special perturbation curves $C_{\RN 4}(ij)$ and $C_{\RN 5}(ij)$ in the $ith$ and $j$th sectors. \label{drawing3a1fig} }
\end{center}
\end{figure}

     \begin{figure}[h]
\begin{center}
\def\svgwidth{6in}
 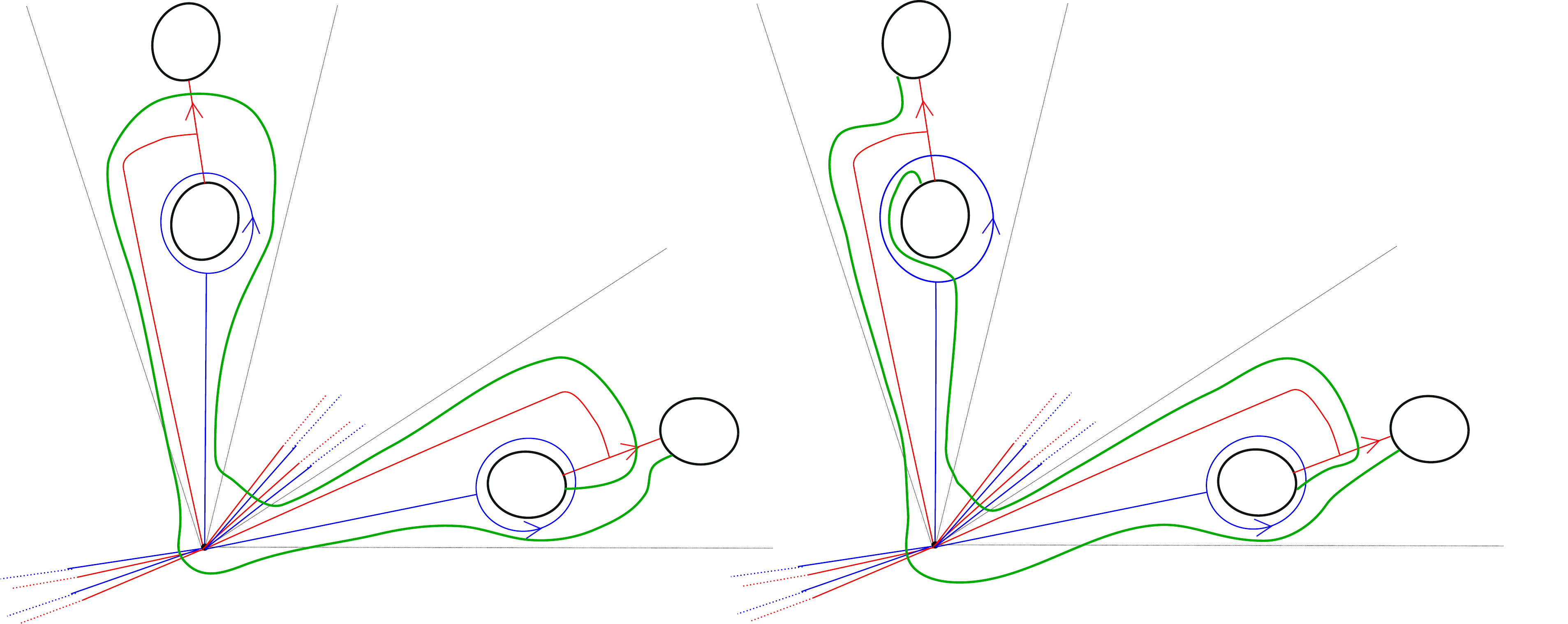
 \caption{The  special perturbation curves $C_{\RN 6}(ij)$ and $C_{\RN 7}(ij)$ in the $ith$ and $j$th sectors. \label{drawing3a2fig} }
\end{center}
\end{figure}

     \begin{figure}[h]
\begin{center}
\def\svgwidth{5.5in}
 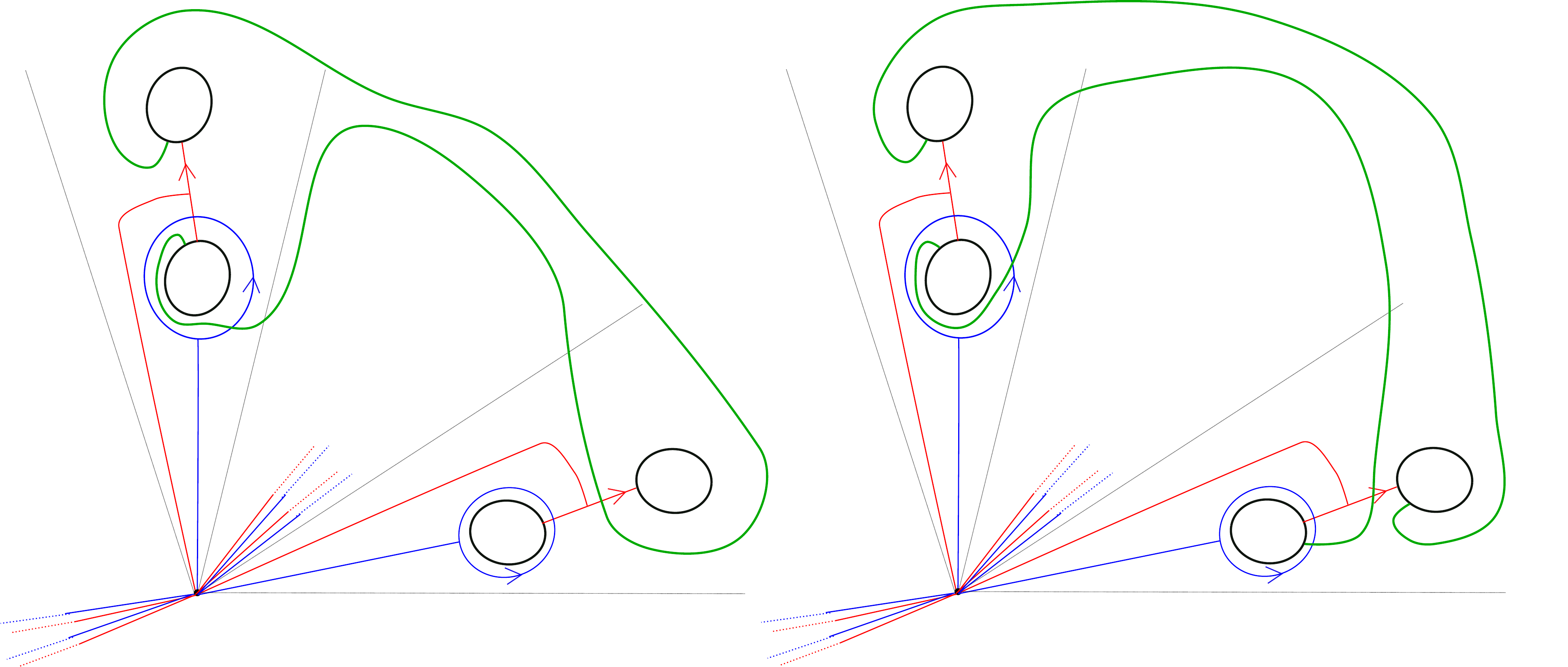
 \caption{The composite special perturbation curves $C_{\RN 8}(ij)$ and $C_{\RN 9}(ij)$ in the $ith$ and $j$th sectors. \label{drawing3a3fig} }
\end{center}
\end{figure} 

    \begin{figure}[h]
\begin{center}
\def\svgwidth{5.5in}
 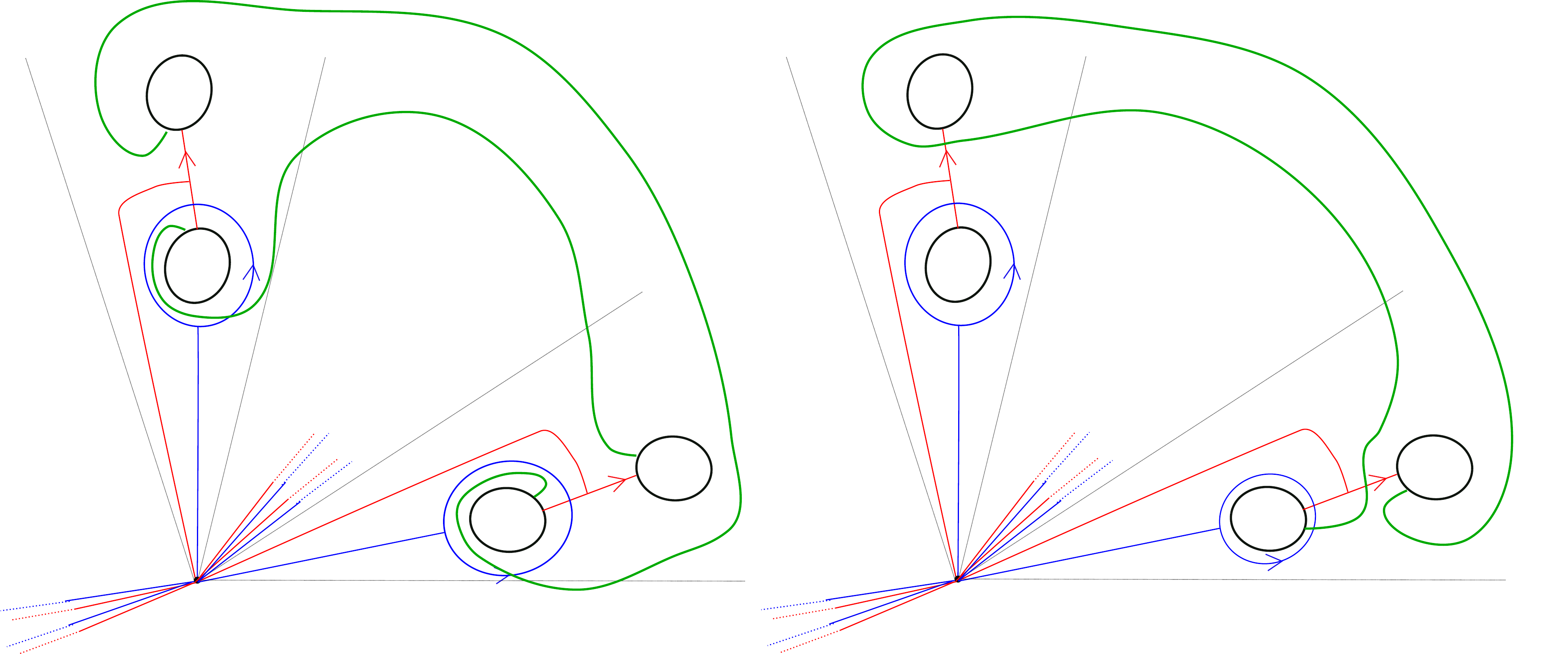
 \caption{The composite special perturbation curves $C_{\RN {10}}(ij)$ and $C_{\RN {11}}(ij)$ in the $ith$ and $j$th sectors.  \label{drawing3a4fig} }
\end{center}
\end{figure}

  For each special perturbation curve $C$ in the first column, the  {\em Intersecting curve} column lists all the  embedded  curves  $E$ in the set $\{A_i,D_i\}_{i=1}^n$ which intersect $C$.  In each case, $E$ meets $C$ transversely in one point.  
 Notice that we do not list any $E$ as an intersection curve if the perturbation curve $C$ only intersects the path $\gamma_E$.  The third column, labeled {\em Longitude},  expresses the longitude $\lambda_C(E)$ of $C$   with respect to $E$, as an   element of  $\pi_1(F)$.     The last column records the sign of the intersection  $E\cdot C$.

\begin{table}
\begin{center}
  \begin{tabular}{|c|c|c|r|}
  \hline
 Perturbation curve $C$ & Intersecting curve $E$  & Longitude  $\lambda_C(E)$ &sign\\
   \hline \hline
   $C_{\RN 1}(i)$& $A_i$&$A_iD_iA_i^{-1}$& $-1$\\ \hline
    $C_{\RN 2}(i)$& $D_i$&$ D_iA_iD_i^{-1}$& $1$\\ \hline
 $ C_{\RN 3}(i)$ & $D_i$ & $ D_iA_i$ &1 \\ \cline{2-4}
   & $A_i$ &$ A_iD_i$ & $-1$\\ \cline{2-4}
  \hline
$C_{\RN 4}(ij)$, $i\neq j$ & $D_i$ & $D_j^{-1}A_i$	&1 \\ \cline{2-4}
& $A_j$ &$D_j^{-1} A_i$ & $1$\\ \cline{2-4}\hline
$ C_{\RN 5}(ij)$, $i\neq j$ & $A_i$ & $D_jD_i$&$-1 $\\ \cline{2-4}
& $A_j$ &$D_iD_j$ & $-1$\\ \cline{2-4}\hline
$C_{\RN 6}(ij)$, $i\neq j$ & $A_i $ & $ A_iA_jA_iD_jA_i^{-1}$& $-1$ \\ \cline{2-4}
& $D_i $ &$ A_j A_iD_i$& $ 1$\\ \cline{2-4} 
& $D_j $ &$A_iD_iA_j$& $ 1$\\ \cline{2-4}\hline
$C_{\RN 7}(ij)$, $i\neq j$ & $ D_i $ &$D_j^{-1}A_i  D_i$& 1\\ \cline{2-4} 
& $ A_i$ & $A_iD_j^{-1}A_iD_iA_i^{-1} $& $-1 $ \\ \cline{2-4}
& $A_j $ &$D_j^{-1}A_iD_i$& $1  $\\ \cline{2-4}\hline

$C_{\RN 8}(ij)$, $i\neq j$ & $ D_i $ &$ ( \prod_{\ell=i+1}^{j-1}A_\ell B_\ell^{-1}) D_j^{-1}  (\prod_{\ell=i}^{j}A_\ell B_\ell^{-1})^{-1}A_i$& $1$  \\ \cline{2-4} 
& $A_j $ &$D_j^{-1}(\prod_{\ell=i}^{j}A_\ell B_\ell^{-1})^{-1} A_i (\prod_{\ell=i+1}^{j-1} A_\ell B_\ell^{-1})$& $ 1  $\\ \cline{2-4}\hline

$C_{\RN 9}(ij)$, $i\neq j$ & $ D_i $ &$ (\prod_{\ell=i+1}^{j-1}A_\ell B_\ell^{-1})D_j^{-1}(\prod_{\ell=i}^{j}A_\ell B_\ell^{-1})^{-1}A_iD_i$& $1$ \\ \cline{2-4} 
& $ A_j$ & $ D_j^{-1}(\prod_{\ell=i}^{j}A_\ell B_\ell^{-1})^{-1}A_iD_i(\prod_{\ell=i+1}^{j-1}A_\ell B_\ell^{-1})$& $ 1 $ \\ \cline{2-4}
& $A_i $ &$A_i(\prod_{\ell=i+1}^{j-1}A_\ell B_\ell^{-1})D_j^{-1}(\prod_{\ell=i}^{j}A_\ell B_\ell^{-1})^{-1}A_iD_iA_i^{-1} $& $  -1 $\\ \cline{2-4}\hline

$C_{\RN {10}}(ij)$, $i\neq j$ & $ A_i $ &$ (\prod_{\ell=i}^{j}A_\ell B_\ell^{-1})D_j (\prod_{\ell=i+1}^{j-1}A_\ell B_\ell^{-1})^{-1} D_i $&$-1 $\\ \cline{2-4} 
& $A_j $ &$ (\prod_{\ell=i+1}^{j-1}A_\ell B_\ell^{-1})^{-1}D_i (\prod_{\ell=i}^{j}A_\ell B_\ell^{-1}) D_j $& $-1   $\\ \cline{2-4}\hline

$C_{\RN {11}}(ij)$, $i\neq j$ & $ D_i $ &$  (\prod_{\ell=i+1}^{j-1}A_\ell B_\ell^{-1})A_j(\prod_{\ell=i}^{j}A_\ell B_\ell^{-1})^{-1}A_iD_i$& $-1$ \\ \cline{2-4} 
& $ A_i$ &$A_i(\prod_{\ell=i+1}^{j-1}A_\ell B_\ell^{-1})A_j(\prod_{\ell=i}^{j}A_\ell B_\ell^{-1})^{-1}A_iD_iA_i^{-1}$& $-1$\\ \cline{2-4}
& $D_j $ &$(\prod_{\ell=i}^{j}A_\ell B_\ell^{-1})^{-1}A_iD_i (\prod_{\ell=i+1}^{j-1}A_\ell B_\ell^{-1})A_j$& $1   $\\ \cline{2-4}\hline

\end{tabular}
\medskip
 \caption {\label{table1} Perturbation curves illustrated in   Figures \ref{drawing3afig}, \ref{drawing3a1fig}, \ref{drawing3a2fig},   \ref{drawing3a3fig}, and \ref{drawing3a4fig} with their longitudes for each intersecting curve $E\in \{ A_i, D_i\}_{i=1}^n$.  }
\end{center}
\end{table}
\medskip

 We leave it as a straightforward exercise
to verify most of the formulas in the third column of Table \ref{table1} for the longitudes with respect to the intersecting curves, but we illustrate the case of  the perturbation curve $C_{\RN 4}(12)$ and intersection curve $A_2$, in 
Figure \ref{drawing2fig}.   The longitude, also illustrated there, is  easily seen to represent the word $ D_2^{-1}A_1$.  The sign is given by $A_2\cdot C_{\RN 4}(12)=1$.

     \begin{figure}[h]
\begin{center}
\def\svgwidth{4.5in}
 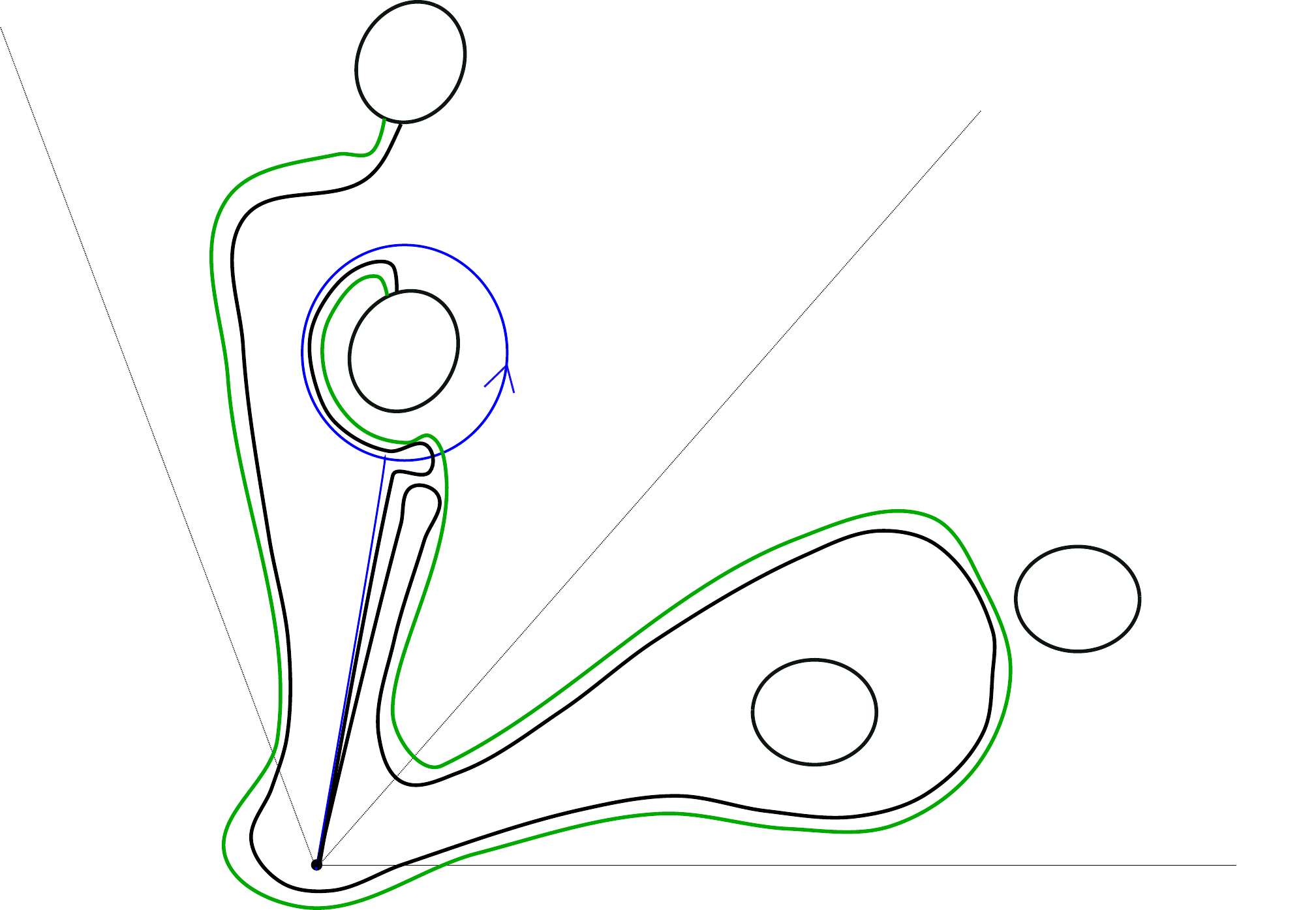
 \caption{The longitude for the perturbation curve $C_{\RN 4}(1,2)$ and intersecting curve $A_2$ equals $D_2^{-1}A_1$. \label{drawing2fig} }
\end{center}
\end{figure}

\section{Perturbations in a cylinder   $F\times I$}\label{sec2}

The transversality arguments in this article consist of two types.  In part, we make use of general results about the generic structure of the perturbed moduli space, proven in \cite{herald1}.  But we also need additional results  concerning how the traceless conditions and earring anticommutativity condition  in this paper cut this moduli space down, and these are not addressed in \cite{herald1}, so we prove  these additional results here.  For  the latter   arguments, we show that it is sufficient to use perturbation curves in a collar neighborhood of $\partial X$.  To set up these arguments, we present in this section some basic results about the effect of perturbing in a cylinder $F\times I$ where $I=[0,1]$. We begin by examining the effect of one such perturbation.

Suppose that $C\subset F$ is an  embedded  oriented curve.    Let $N_C$ denote a  tubular neighborhood of $C\times\{\tfrac 1 2\}$ in the cylinder $F\times I$, framed so that its longitude $\lambda$ is represented by the push off $C\times\{ \tfrac 1 2 + \epsilon \}$.   Fix $\phi\in \mathcal{X}$ (see Equation (\ref{pertfun})) and consider the perturbation data $\pi_C=(N_C,  \phi)$.  

\begin{prop}  \label{prop1}  \label{stratpreserving}   With $\pi_C$ as above, the restriction map $$\mathcal{M}_{\pi_C}(F\times I)\to \mathcal{M}(F\times\{1\})
 $$
is a homeomorphism, preserving the orbit type stratification, and the same is true for restriction to the other end.  This is a diffeomorphism on each stratum.  \end{prop}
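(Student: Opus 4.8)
The plan is to reduce the statement to an explicit description of $\pi_1$ of the complement $(F\times I)\setminus N_C$. The solid torus $N_C$ lies inside a thin slab $F\times(\tfrac12-\epsilon,\tfrac12+\epsilon)$, and removing it leaves the two collars $F\times[0,\tfrac12-\epsilon]$ and $F\times[\tfrac12+\epsilon,1]$, each a copy of $F$, joined across the wall $(F\setminus\nu(C))\times\{\tfrac12\}$ and around the boundary torus $\partial N_C$. A van Kampen argument based on this decomposition (and on the annular neighborhood $\nu(C)\subset F$) identifies
$$\pi_1\big((F\times I)\setminus N_C\big)\;\cong\;\pi_1(F)\;*_{\langle C\rangle}\;\big(\ZZ\langle C\rangle\oplus\ZZ\langle\mu_C\rangle\big),$$
the group obtained from $\pi_1(F)$ by adjoining a single generator $\mu_C$, a meridian of $N_C$, commuting with $C$. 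With a suitable choice of basepoint and of the arc joining $N_C$ to it, the inclusion $i_+:\pi_1(F\times\{1\})\to\pi_1((F\times I)\setminus N_C)$ of the top face is the first factor of this pushout, while the longitude $\lambda_C$, the push-off $C\times\{\tfrac12+\epsilon\}$ (which isotopes up into $F\times\{1\}$ inside the complement), is identified with $C\in\pi_1(F)$. Establishing this presentation carefully — in particular that the only relations are the surface relation of $F$ together with $[\mu_C,\lambda_C]=1$, so that every $(\rho,M)$ with $M$ centralizing $\rho(C)$ extends — is the one genuinely fiddly point; for the essential curves $C$ used in this paper it is routine, and it is the algebraic counterpart of the twist picture recorded later in Theorem~\ref{hamiltonian}.

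Granting this, a representation $\tilde\rho$ of $\pi_1((F\times I)\setminus N_C)$ is exactly a pair consisting of a representation $\rho:=\tilde\rho\circ i_+$ of $\pi_1(F)$ and an element $M:=\tilde\rho(\mu_C)\in SU(2)$ commuting with $\rho(C)$, and the perturbation condition (\ref{pert}), $\tilde\rho(\mu_C)=F_\phi(\tilde\rho(\lambda_C))$ with $F_\phi$ the conjugation-equivariant map (\ref{conjinv}) attached to $\phi$, reads simply $M=F_\phi(\rho(C))$. Since $F_\phi(g)$ always commutes with $g$, the assignment $\rho\mapsto\tilde\rho$ determined by $\tilde\rho|_{\pi_1(F)}=\rho$ and $\tilde\rho(\mu_C)=F_\phi(\rho(C))$ is a well-defined two-sided inverse, on the level of representation varieties, to restriction to $F\times\{1\}$. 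This bijection $\Phi$ is continuous with continuous inverse (the inverse of $\Phi$ is a coordinate projection, and $\Phi$ is built from the continuous map $F_\phi$), and it intertwines the conjugation actions because $i_+$ is natural and $gF_\phi(h)g^{-1}=F_\phi(ghg^{-1})$. Hence it descends to a homeomorphism $\mathcal{M}_{\pi_C}(F\times I)\to\mathcal{M}(F\times\{1\})=\mathcal{M}(F)$.

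Orbit types are preserved because $\Phi$ is equivariant: if $g$ stabilizes $\rho$ it fixes $\rho(C)$ and therefore also $F_\phi(\rho(C))=\tilde\rho(\mu_C)$, so $\Stab(\tilde\rho)=\Stab(\rho)$, and $\Phi$ carries $\mathcal{M}_{\pi_C}(F\times I)^G$ onto $\mathcal{M}(F)^G$ for each $G\in\{\ZZ/2,U(1),SU(2)\}$. For the differentiable assertion, fix generators $x_1,\dots,x_{2n}$ of $\pi_1(F)$, let $W_C$ be the word they represent for $C$, and use $\mu_C$ as the extra generator; then $\Phi$ is the map $SU(2)^{2n}\to SU(2)^{2n+1}$ given by $(X_1,\dots,X_{2n})\mapsto(X_1,\dots,X_{2n},F_\phi(W_C(X_1,\dots,X_{2n})))$, a $C^{k-1}$ embedding whose inverse (a coordinate projection) is smooth. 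Thus $\Phi$ carries the smooth irreducible, abelian, and central loci of $\Hom(\pi_1(F),SU(2))$ diffeomorphically onto the corresponding loci of the $\pi_C$-perturbed representation variety; dividing by the conjugation action — free modulo the center on the irreducible locus, with $U(1)$ stabilizers on the abelian locus — yields, under the homeomorphism $\Phi$, a diffeomorphism on each stratum of the Goldman decomposition (\ref{stabde}). Finally, restriction to the other end $F\times\{0\}$ is restriction to $F\times\{1\}$ composed with the homeomorphism of $\mathcal{M}_{\pi_C}(F\times I)$ induced by the fiber-reversing involution $(x,t)\mapsto(x,1-t)$ of $F\times I$, which preserves $N_C$ up to the irrelevant choice of push-off, so it has the same properties.

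The main obstacle is the first step alone: producing the presentation of $\pi_1((F\times I)\setminus N_C)$ with the correct identifications of $i_+$, $\mu_C$ and $\lambda_C$, and checking there are no relations beyond the surface relation and $[\mu_C,\lambda_C]=1$. Everything afterward is a formal consequence of the conjugation-equivariance and $C^{k-1}$-smoothness of $g\mapsto F_\phi(g)$.
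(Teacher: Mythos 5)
Your argument is correct and follows the same overall strategy as the paper's proof: compute $\pi_1((F\times I)\setminus N_C)$ by Seifert--van Kampen, observe that the perturbation condition $\tilde\rho(\mu_C)=F_\phi(\tilde\rho(\lambda_C))$ forces a unique extension of any $\rho\in\mathcal{M}(F\times\{1\})$ (so restriction to either end is a bijection), and then realize both directions by conjugation-equivariant smooth maps between representation varieties, which gives the stratified homeomorphism and the diffeomorphisms on strata. The one genuine difference is how the fundamental group is packaged: you give a single uniform presentation $\pi_1(F)\ast_{\langle C\rangle}\bigl(\ZZ\langle C\rangle\oplus\ZZ\langle\mu_C\rangle\bigr)$ valid whether or not $C$ separates, whereas the paper splits into two cases, and in the non-separating case writes $\pi_1(F)\ast\ZZ\langle m\rangle$. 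Your version is actually the more accurate one: the meridian and longitude lie on the boundary torus of $N_C$ and hence commute in the complement, with $\lambda_C$ conjugate to a nontrivial element of $\pi_1(F)$, which cannot happen in a free product; so the relation $[\mu_C,\lambda_C]=1$ you build in is really present (the paper's separating-case presentation (\ref{seprel}) does encode it). This omission is harmless for the paper's conclusions precisely because the extension dictated by the perturbation condition automatically commutes with $\tilde\rho(\lambda_C)$ -- the point your write-up makes explicit -- but your presentation is the one for which ``extend $\rho$ by any $M$ in the centralizer of $\rho(C)$'' is literally true. The step you leave as the ``fiddly point,'' namely that van Kampen applied to the pieces $(F\setminus\nu(C))\times I$ and $\nu(C)\times I\setminus N_C$ yields exactly this amalgam with $i_+$ the evident copy of $\pi_1(F)$ and $\lambda_C$ identified with $C$, does go through (a short graph-of-groups computation in both the separating and non-separating cases), so there is no gap; your treatment of the other end via the involution $(x,t)\mapsto(x,1-t)$ is also fine, though one can simply rerun the same argument with the bottom face, since the upward and downward push-offs of $C\times\{\tfrac12\}$ are isotopic framings.
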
 
\begin{proof}  
  Since the statement of the proposition involves representations up to conjugation, its veracity is independent of where we place the basepoint.  For convenience, choose  a basepoint  $x$  in $F$ that is not on $C$.   We fix the base point $(x,0)\in F\times I$ in the cylinder.  Let $\{A_i, D_i\}_{i=1} ^n$ denote the usual set of generators for $\pi_1(F)$.  
 We consider two cases, when $C$ is non-separating and when $C$ is separating in $F$. 

Consider first the case when $C$ is non-separating.  Since homeomorphisms of $F$ induce homeomorphisms of $\mathcal{M}(F)$ which are diffeomorphisms on each stratum,  it is sufficient to consider the case when $C$ is 
 the special perturbation curve $C_{\RN 1}(1)$ of Figure \ref{drawing3afig}.  View  the longitude   $\lambda$  of $C$ as a based loop by connecting it to the base point so that $\lambda$ and $D_1$ are homotopic relative to the base point.  

The Seifert-Van Kampen theorem shows that 
$$\pi_1(F\times I\setminus N_C)=\langle A_i, D_i, m~|~ \textstyle \prod\limits_{i=1}^n[A_i,D_i]=1\rangle=\pi_1(F)*\ZZ\langle m\rangle,$$  where $m$ is a meridian for $C$ connected to the base point the same way as $\lambda$. Thus any $\rho\in \mathcal{M}(F)$ may be extended to $\pi_1(F\times I\setminus N_C)$ by sending $m$ to any element in $SU(2)$.  For such an extension $\tilde \rho$ to satisfy the perturbation condition (\ref{pert}), $\tilde \rho(m)=1$ if $\rho(\lambda)=\pm 1$, and otherwise $\tilde \rho(m)=F(\rho(\lambda))$, where $F(e^{\alpha Q})=e^{\phi(\alpha)Q}$  when $\|Q\|=1$.    Hence the   extension of $\rho$ to $\tilde \rho$ in $\mathcal{M}_{\pi_C}(F\times I)$ is unique.

It is clear that the stabilizers of $\rho$ and $\tilde \rho$  coincide,   since $\tilde \rho(m)$ commutes with the element  $\rho(D_1)$ in the image of the first factor of the free product,  and since $\tilde\rho (m)=1$ if $\rho(D_1)$ is central.    This extension map $\rho\in \mathcal{M} (F) \mapsto \mathcal{M_\pi} (F\times  [0,1]) $ is an inverse for the restriction map sending $\tilde \rho$ to its restriction to $\pi_1(F),$ that is, its restriction to the first factor in the free product.  

 The longitude of $N_C$ can be  expressed as the word $\lambda(A_1,B_1,\dots , A_n, B_n)$. The map $SU(2)^{2n}\to SU(2)^{2n+1}$  given by $$(a_1,b_1,\dots,a_n,b_n)\mapsto (a_1,b_1,\dots,a_n,b_n,F(\lambda(a_1,b_1,\dots,a_n,b_n))$$   is smooth and equivariant with respect to conjugation.  The map $SU(2)^{2n+1}\to SU(2)^{2n}$ which projects onto the first $2n$ factors is also smooth and equivariant. These two maps induce bijections on their subquotients $\mathcal{M}_{\pi_C}(F\times I)\cong \mathcal{M}(F)$ by the previous paragraphs,  and hence they induce inverse homeomorphisms.  These are smooth diffeomorphisms on each stratum, since the orbit type stratification of $\mathcal{M}(F)$ coincides with its stratification as an algebraic variety. 

\medskip

Now consider the case when $C$ is separating. Assume the path component of $F\setminus C$ containing the base point has genus $g$. Up to homeomorphism of $F$, we may assume that $C$ is the curve depicted in Figure \ref{separatingfig}. 
    \begin{figure}[h]
\begin{center}
\def\svgwidth{4.9in}
 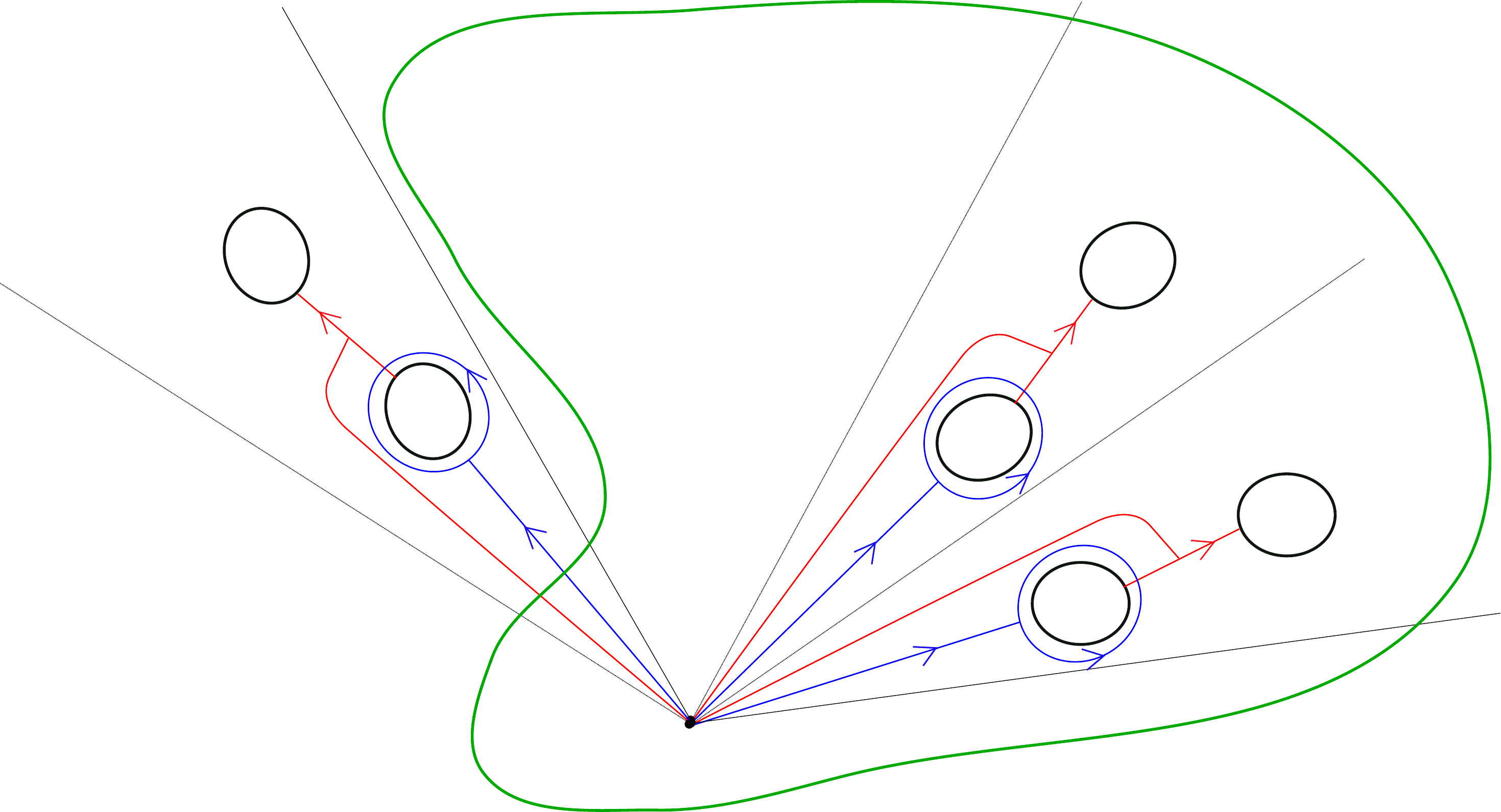
 \caption{ \label{separatingfig} }
\end{center}
\end{figure} 
Thus $C$ misses $A_1,D_1, \dots, A_g,D_g$ and their paths from  the base point, but $C$ intersects the paths from the base point to $A_{g+1},D_{g+1}, \dots, A_n,D_n$.

The Seifert-Van Kampen theorem in this case shows that  
$\pi_1(F\times [0,1]\setminus N_C)$ has a presentation with generators
$$A_i=A_i\times \{0\}, D_i=D_i\times \{0\},A_i'=A_i\times \{1\}, D_i'=D_i\times \{1\}, m$$
where $m$ is the meridian of $N_C$, subject to the relations
 \begin{equation}\label{seprel}
\textstyle \prod\limits_{i=1}^n[A_i,D_i]=1= \textstyle \prod\limits_{i=1}^n[A_i',D_i' ] , ~A_i'=\begin{cases} A_i&\text{ if } i\leq g,\\ mA_im^{-1}&\text{ if } i> g,\end{cases}, ~D_i'=\begin{cases} D_i&\text{ if } i\leq g,\\ mD_im^{-1}&\text{ if } i> g.\end{cases}
\end{equation}

The longitude $\lambda$ represents $\textstyle \prod\limits_{i=1}^g[A_i,D_i]$.  Again, one sees that given $\rho:\pi_1(F)\to SU(2)$, there exists a unique extension of $\rho$ to $\tilde\rho:\pi_1(F\times [0,1]\setminus N_C)\to SU(2)$ satisfying the perturbation condition. In fact,  $\rho$ determines $\tilde\rho(\lambda)$ by  $\tilde\rho(\lambda)=\rho(\textstyle \prod\limits_{i=1}^g[A_i,D_i])$, which in turn determines $\tilde\rho(\mu)$ by the perturbation condition (\ref{pert}).  Then $\rho$ and $\tilde\rho(m)$ determine $\tilde\rho(A_i')$ and $\tilde\rho(D_i')$ by the relations (\ref{seprel}), and the relation $ \tilde\rho(\textstyle \prod\limits_{i=1}^n[A_i',D_i' ])=1$ is a consequence of the fact that $\rho(\textstyle \prod\limits_{i=1}^n[A_i,D_i ])=1$ and $\tilde\rho([m,\lambda])=1$. The rest of the argument is similar to the first case, and we leave the details to the reader.
 \end{proof}

 Denote  by
 $\Phi_{\pi_C}:\mathcal{M}(F)\to \mathcal{M}(F) $ the composite homeomorphism $$\mathcal{M}(F)=\mathcal{M}(F\times\{0\})\leftarrow\mathcal{M}_{\pi_C}(F\times I)\to \mathcal{M}(F\times\{1\})=\mathcal{M}(F).
 $$ 
 Recall that $\lambda_C(E)$ denotes the longitude of $C$ with respect to $E$, from Definition \ref{longitude}, and $\pi_C$ denotes the perturbation data $\pi_C=\{N_C,\phi\}$.

\begin{prop} \label{isoform}
 Let $C$ be an embedded curve in $F$ and $\phi\in \mathcal{X}$, determining perturbation data $\pi_C=\{N_C,\phi\}$. Let $E$ be a  loop in $F$, for example $E\in \{A_i, D_i\}_{i=1}^n$, $\gamma_E$ a path from $E$ to the base point,    and let $\rho\in \mathcal{M}(F)$.    
 
 If $C$ is disjoint from  $E \cup \gamma_E$, then 
 $$\Phi_{\pi_C}(\rho)(E)=\rho(E).$$

If $C$ meets $E$ transversely in a single  point    with oriented   intersection number $E\cdot C=\pm 1$  and $C$ misses its arc $\gamma_E$ from the base point, 
and  if $\rho(\lambda_C(E))=e^{ \alpha  Q}$,  then 
 $$\Phi_{\pi_C}(\rho)(E)=e^{\pm \phi(\alpha) Q}\rho(E).$$
 
If $C$ misses $E$  but intersects  the
  arc $\gamma_E$ 
  transversely in a single point with oriented intersection number $C\cdot \gamma_E = \pm 1$,  and if $\rho(\lambda_C(E))=e^{\alpha Q}$, 
  then 
  $$\Phi_{\pi_C}(\rho)(E)=e^{\pm \phi(\alpha)Q}\rho(E)e^{\mp \phi(\alpha)Q}.$$
\end{prop}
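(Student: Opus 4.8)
The plan is to unwind the construction of $\Phi_{\pi_C}$ and to track base points carefully. By Proposition \ref{prop1} the unique $\tilde\rho\in\mathcal{M}_{\pi_C}(F\times I)$ extending $\rho$ on $F\times\{0\}$ restricts on $F\times\{1\}$ to a representative of $\Phi_{\pi_C}(\rho)$; transporting back to $F$ by the vertical identification and a fixed vertical arc $\gamma_v=\{x\}\times I$ at a base point $x\in F\setminus C$, we have $\Phi_{\pi_C}(\rho)(E)=\tilde\rho\big(E^{[1]}\big)$, where $E^{[1]}$ denotes the level-$1$ copy of the based loop representing $E$, re-based at $(x,0)$ by pre- and appending $\gamma_v$. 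Three facts will be used: the perturbation condition (\ref{pert}) is conjugation-invariant, so $\tilde\rho(m')=F(\tilde\rho(\lambda'))$ whenever $m'$ and the framed longitude $\lambda'$ of $N_C$ are based by a common arc; $m'$ and $\lambda'$ commute, being loops on $\partial N_C$; and $N_C$ may be shrunk by an ambient isotopy fixing $F\times\{0\}$ and $F\times\{1\}$, so we may assume it as thin as desired without changing $\Phi_{\pi_C}$.

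The heart of the matter is to express $E^{[1]}$ in $\pi_1(F\times I\setminus N_C)$ in terms of $\pi_1(F\times\{0\})$, which is done by pushing $E^{[1]}$ down to level $0$. When $C$ is disjoint from $E\cup\gamma_E$, take $N_C$ thin enough to miss $(E\cup\gamma_E)\times I$; then the push-down homotopy avoids $N_C$, so $E^{[1]}=E^{[0]}$ and $\Phi_{\pi_C}(\rho)(E)=\rho(E)$. When $C$ meets $E$ transversally at one point $p$ and misses $\gamma_E$, the push-down track meets $N_C$ exactly once, and routing the detour around $N_C$ gives $E^{[1]}=m_p^{\,\varepsilon}\cdot E^{[0]}$ in $\pi_1(F\times I\setminus N_C)$, where $m_p$ is the meridian of $N_C$ at $p$ based by the initial segment of $\gamma_E\cdot E$ from $x$ to $p$, and $\varepsilon=\pm1$. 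When instead $C$ misses $E$ but meets $\gamma_E$ transversally at $p'$, the detour occurs inside the re-basing arc and yields $E^{[1]}=m_{p'}^{\,\varepsilon}\,E^{[0]}\,m_{p'}^{-\varepsilon}$, with $m_{p'}$ the meridian at $p'$ based by the initial segment of $\gamma_E$ from $x$ to $p'$. This "a detour picks up a meridian" step is the standard fact about homotoping an arc across a codimension-two submanifold; a fully rigorous alternative is to extend the Seifert--van Kampen presentation used in the proof of Proposition \ref{prop1} so as to record the generators of $\pi_1(F\times\{1\})$ as well, and to read off the same words.

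It remains to apply the perturbation condition. In both nontrivial cases, comparison with Definition \ref{longitude} shows that the framed longitude of $N_C$ based by the very arc used to base $m_p$ (resp. $m_{p'}$) is precisely $\lambda_C(E)$; since $\lambda_C(E)$ is represented by a loop at level $0$ disjoint from $N_C$, we get $\tilde\rho(\lambda_C(E))=\rho(\lambda_C(E))=e^{\alpha Q}$, and hence $\tilde\rho(m_p)=F(e^{\alpha Q})=e^{\phi(\alpha)Q}$ (and likewise for $m_{p'}$). Substituting into the expressions of the previous paragraph yields $\Phi_{\pi_C}(\rho)(E)=e^{\varepsilon\phi(\alpha)Q}\rho(E)$ in the first nontrivial case and $\Phi_{\pi_C}(\rho)(E)=e^{\varepsilon\phi(\alpha)Q}\rho(E)e^{-\varepsilon\phi(\alpha)Q}$ in the second. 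A local orientation computation at the crossing point identifies $\varepsilon$ with the oriented intersection number $E\cdot C$ (resp. $C\cdot\gamma_E$); equivalently, the sign can be fixed once and for all by a single explicit model, e.g. $C=C_{\RN 1}(1)$ with $E=A_1$, or $C_{\RN 4}(12)$ with $E=A_2$ from Table \ref{table1}.

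The step I expect to be the main obstacle is the base-point bookkeeping in the detour argument: one must be certain that the meridian appearing in the word for $E^{[1]}$ is based by exactly the arc used in Definition \ref{longitude} to form $\lambda_C(E)$, so that $\tilde\rho(m')=F(\tilde\rho(\lambda'))$ applies on the nose rather than up to an extraneous conjugation, which would spoil the clean formulas. Carrying this out via the explicit Seifert--van Kampen presentation (as in the proof of Proposition \ref{prop1}, but retaining the level-$1$ generators) is the safest route, while the geometric "push down and detour" picture is the conceptual shortcut. Pinning down the exponential signs is a secondary annoyance, settled by the one model computation above.
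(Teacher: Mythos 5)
Your proposal is correct and follows essentially the same route as the paper: both push the level-$1$ copy of the based loop down through the cylinder, observe that the track is punctured once by $C\times\{\tfrac12\}$ so the pushed-down word acquires a meridian factor (multiplication in the case of an $E$-crossing, conjugation in the $\gamma_E$-crossing case), note that this meridian is based exactly as $\lambda_C(E)$ so the perturbation condition applies without extraneous conjugation, and fix the sign by the oriented intersection number. The paper implements the same bookkeeping via the explicit level-preserving square $h_E$ over $\gamma_E*E*\gamma_E^{-1}$, which is just a concrete version of your push-down/detour argument.
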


\begin{proof} 
Let $e:I\to F$ be a parameterization of the based loop $\gamma_E*E*\gamma_E^{-1}$. Extend $e$ to a level preserving map of a square 
$$h_E:I\times I \to F\times I, ~(t,s)\mapsto  (e(t),s).$$
Identify $e$ with $e\times \{ 0\}$.   Recall that $x$ denotes the base point of $F$.   Denote by $e'$ the based loop in $F\times I\setminus N_C$ obtained by traveling along $\{x \}\times I$, then following $e\times \{1\}$, then returning to the base point  along $\{x\}\times I$.   The map $\Phi_{\pi_C}:\mathcal{M}(F)\to \mathcal{M}(F)$ takes a representation $\rho:\pi_1(F\times \{0\}) \to SU(2)$ to the restriction to $F\times \{1\}$ of the unique extension $\tilde \rho$ of $\rho$ to $\mathcal{M}_{\pi_C}(F\times I)$. In other words, $$ \Phi_{\pi_C}(\rho(e))= \tilde \rho(e').$$

 If $C$ misses $E$ and $\gamma_E$, then 
$h_E$ gives a based homotopy in $F\times I \setminus N_C $ between $e$ and $e'$, and hence  $\tilde \rho(e')=\tilde \rho(e)$, proving the first assertion.

If $C$ meets $E$ transversely in one point $y$ and $C$ misses $\gamma_E$, then $C\times \{ \frac 1 2 \}$ punctures the square $h_E$ once in  the interior of its embedded middle section, 
and  
\begin{equation}
\label{intersect}\tilde \rho(e')=\tilde \rho(m)\tilde \rho(e),
\end{equation}
where $m$ is a loop which travels  in $F\times \{0\}$ along $\gamma_E$ to $E$, then along $E$   in $F\times \{0\}$ to $(y,0)$, then along $\{y\}\times I$ toward the puncture where $h_E$ meets $C\times \{ \frac 1 2 \}$,  then around the puncture with  the positive orientation (on the square), and then backwards along $\{y\}\times I$ and backwards  in $F\times \{0\}$  along $E$ to the base point.  

Since $\lambda_C(E)$ was defined in this case to be a longitudinal curve connected  to the base point in the same way as the meridian $m$, with the same orientation as $C$,  $m^{E\cdot C}$ is a meridian to the perturbation curve with $m^{E\cdot C} \cdot \lambda_C(E)=1$ in $\partial N_C$.   In particular,  since $\tilde \rho$ satisfies the perturbation condition (\ref{pert}),  
 Equation (\ref{intersect}) implies that 
\begin{equation*}
 \tilde \rho(e')=e^{\pm \phi(\alpha)Q}\tilde \rho(e), \text{ where } \tilde \rho(\lambda_C(E))=e^{\alpha Q},  
\end{equation*}
 with the sign in the exponent equal to   $E\cdot C$. This proves the second assertion.

  A similar argument applies when $C$ misses $E$ but intersects $\gamma_E$ in a point $y$.   In this case,  $h_e$ maps two outer strips of the square onto the same embedded rectangle $\gamma_E \times I$,  but with opposite orientations, and $C\times \{ \frac 1 2 \}$ punctures this embedded rectangle.  Define  $m$ by traveling along $\gamma_E$ to $(y,0)$, down $\{y\}\times I $ and around the puncture, and back up to $(y,0)$ and then back along $\gamma_E$ to the base point. 
Then $e'$ is homotopic rel base point to $mem^{-1}$, and the perturbation condition gives $$ \tilde \rho(e')=e^{\pm \phi(\alpha)Q}\tilde \rho(e)e^{\mp \phi(\alpha)Q}, $$  where $\pm 1=  \gamma_E \cdot C$.  We leave the details to the reader.\end{proof}

\medskip

We next prove that the maps $\Phi_{\pi_C}$ are symplectomorphisms, and in fact that the 1-parameter family of perturbations $\pi_{C,t}=\{N_C,t\phi\}, t\in [0,1]$  yields  a Hamiltonian isotopy $\Phi_{\pi_{C,t}}$ starting at the identity. We make use of Goldman's result \cite{goldman1}, which we recall briefly next.

Let  
 $f:SU(2)\to \RR$ and   $F:SU(2)\to su(2)$ be functions that are conjugation invariant and equivariant, respectively,  and which satisfy
 Goldman's  relation  in \cite{goldman1} with respect to the inner product $\langle u, v\rangle=-\Real(uv)$:
\begin{equation}
\label{goldmanrel}\langle F(g), v\rangle= \tfrac{d}{dt}|_{t=0} f(ge^{tv}) \text{ for all } g\in SU(2), v\in su(2).
\end{equation}
 
  Let $c\in \pi_1(F)$ be the loop obtained by connecting the embedded curve $C$ to the base point by some path.
 Then $c$  defines  maps
 $$f_c:\mathcal{M}(F)\to \RR,~ f_c([\rho])= f(\rho(c))$$
 and
  $$F_c:\Hom(\pi_1(F),SU(2))\to su(2),~ F_c(\rho)= F(\rho(c)).$$
 Note that the function $f_c$ does not depend on the choice of path from the base point to $C$, although $F_c$ does. 
The map $f_c$ defines a Hamiltonian flow $\Xi_t$   on the symplectic manifold $\mathcal{M}(F)^{\ZZ/2}$.

Goldman's result,  \cite[Theorems 4.3, 4.5, 4.7]{goldman1}, calculates the flow $\Xi_t$ explicitly,  assuming the curve $C$ passes through the base point. He defines the  {\em twist flow on  $\Hom(\pi_1(F),SU(2))$ associated to $C$} as follows.  If $C$ is non-separating, then $\pi_1(F)$ is generated by loops which miss $C$, together with one additional loop which intersects $C$ transversally exactly once. Goldman defines a  flow on $\Hom(\pi_1(F),SU(2))$ by
\begin{equation}\label{nonsepgold}\Xi_t(\rho)(\gamma)=\begin{cases} \rho(\gamma)& \text{ if $\gamma$ is a loop missing $C$,} \\ \rho(\gamma)e^{tF_c(\rho)}& \text{ if $\gamma$ intersects  $C$ exactly once transversally with $\gamma\cdot c=1$.}
\end{cases}
\end{equation}

If $C$  separates $F$ into path components $F_1, F_2$, then $\pi_1(F)$ is generated by loops which lie entirely in $F_1$ or entirely in $F_2$. In this case Goldman defines a  flow on $\Hom(\pi_1(F),SU(2))$ by
\begin{equation}\label{sepgold}\Xi_t(\rho)(\gamma)=\begin{cases} \rho(\gamma)& \text{ if $\gamma$ is a loop in $F_1$,} \\ e^{tF_c(\rho)} \rho(\gamma) e^{-tF_c(\rho)}& \text{ if $\gamma$ is a loop in $F_2$.}
\end{cases}
\end{equation}
Goldman's theorem asserts that (\ref{nonsepgold}) and (\ref{sepgold}) uniquely determine  flows on $\Hom (\pi_1(F),SU(2))$, and these  push down to yield the Hamiltonian flow on $\mathcal{M}(F)^{\ZZ/2}$ determined by $f_c$.

\begin{thm}\label{hamiltonian} Let $\pi_{C,t}$ be the 1-parameter family of perturbations $\pi_{C,t}=(N_C, t \phi)$ where $C\subset F$ is an embedded curve  and $\phi\in \mathcal{X}$.  

Then the isotopy $\Phi_{\pi_{C,t}}:\mathcal{M}(F)\to \mathcal{M}(F)$ restricts to a Hamiltonian isotopy on the smooth  stratum  $\mathcal{M}(F)^{\ZZ/2}$. In fact, $\Phi_{\pi_{C,t}}$ is the twist flow associated to $C$,  generated by the function 
$$f_c:\mathcal{M}(F)\to \RR, ~f_c([\rho])=\psi(\cos^{-1}\Real(\rho(C)))$$ for any antiderivative  $\psi$ of $\phi$.

\end{thm}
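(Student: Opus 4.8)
\emph{Plan.} I would prove the theorem by identifying $\Phi_{\pi_{C,t}}$, at the level of representations, with Goldman's twist flow associated to $C$. The first step is to fix the pair of functions on which Goldman's construction runs. Let $\psi$ be an antiderivative of $\phi$, and set $f(g)=\psi(\cos^{-1}\Real(g))$ and $F(g)=\phi\bigl(\cos^{-1}\Real(g)\bigr)\,H(g)$ for $g\neq\pm1$, where $H$ is the map of (\ref{Hmap}); since $\phi$ is odd we have $\phi(0)=\phi(\pi)=0$, so $F$ extends continuously by $0$ at $\pm1$. The identity $\langle H(g),v\rangle=\tfrac{d}{dt}\big|_{t=0}h(g e^{tv})$ for $h(g)=\cos^{-1}\Real(g)-\tfrac\pi2$ --- already recorded in the proof of Theorem~\ref{surj} --- together with the chain rule for $\psi$, shows that $(f,F)$ satisfy Goldman's relation (\ref{goldmanrel}). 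Then for any based loop $c$ representing $C$ one gets $f_c([\rho])=f(\rho(c))=\psi(\cos^{-1}\Real(\rho(C)))$ (independent of the connecting path) and $F_c(\rho)=F(\rho(c))$, so by Goldman's theorem \cite{goldman1} the Hamiltonian flow $\Xi_t$ of $f_c$ on the smooth stratum $\mathcal{M}(F)^{\ZZ/2}$ is the unique flow on $\Hom(\pi_1(F),SU(2))$ satisfying (\ref{nonsepgold}) if $C$ is non-separating and (\ref{sepgold}) if $C$ separates. Since $\Phi_{\pi_{C,t}}$ is likewise induced by a conjugation-equivariant bijection of $\Hom(\pi_1(F),SU(2))$, namely $\rho\mapsto\tilde\rho|_{F\times\{1\}}$ for the unique extension $\tilde\rho$ provided by the proof of Proposition~\ref{prop1}, and a homomorphism is determined by its values on generators, it suffices to check that these two covering maps agree on a generating set of $\pi_1(F)$ adapted to $C$, for each fixed $t$.

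\emph{The non-separating case.} A homeomorphism of $F$ --- which intertwines both $\Phi_{\pi_{C,t}}$ and Goldman's twist flow and preserves the stratification --- reduces us to the case $C=C_{\RN 1}(1)$ of Figure~\ref{drawing3afig}. By Table~\ref{table1}, among the standard generators $C$ meets only $A_1$ (transversely, in one point, with $A_1\cdot C=-1$), with $\lambda_C(A_1)=A_1D_1A_1^{-1}$. Proposition~\ref{isoform} then gives $\Phi_{\pi_{C,t}}(\rho)(E)=\rho(E)$ for $E\in\{D_1,A_2,D_2,\dots,A_n,D_n\}$ and $\Phi_{\pi_{C,t}}(\rho)(A_1)=e^{-t\phi(\alpha)Q}\rho(A_1)$ where $\rho(A_1D_1A_1^{-1})=e^{\alpha Q}$. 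As $A_1D_1A_1^{-1}$ is a based loop representing the free class of $C$, using equivariance of $F$ to move the exponential across $\rho(A_1)$ rewrites this as $\rho(A_1)\,e^{-tF_{D_1}(\rho)}$, which --- after matching signs using $A_1\cdot D_1=-1$ and $F(g^{-1})=-F(g)$ --- is exactly the twist-flow formula (\ref{nonsepgold}); the remaining generators, being loops missing $C$, are fixed by both maps. Hence $\Phi_{\pi_{C,t}}=\Xi_t$ on $\mathcal{M}(F)^{\ZZ/2}$.

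\emph{The separating case.} By a homeomorphism reduce to $C$ as in Figure~\ref{separatingfig}, separating $F$ into $F_1$ (of genus $g$, containing the base point and the generators $A_1,D_1,\dots,A_g,D_g$ with their arcs) and $F_2$ (containing $A_{g+1},D_{g+1},\dots,A_n,D_n$, whose arcs can be taken to share an initial segment crossing $C$ exactly once). For $E\in\{A_1,\dots,D_g\}$, $C$ misses $E\cup\gamma_E$, so Proposition~\ref{isoform} gives $\Phi_{\pi_{C,t}}(\rho)(E)=\rho(E)$; for $E\in\{A_{g+1},\dots,D_n\}$, $C$ misses $E$ but meets $\gamma_E$ in one point, so $\Phi_{\pi_{C,t}}(\rho)(E)=e^{t\phi(\alpha)Q}\rho(E)e^{-t\phi(\alpha)Q}$ with $\rho(\lambda_C(E))=e^{\alpha Q}$, and because the arcs share their crossing of $C$ the loop $\lambda_C(E)$ is a common based representative $c$ of $C$, so $\phi(\alpha)Q=F_c(\rho)$. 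These are precisely the two cases of (\ref{sepgold}), so again $\Phi_{\pi_{C,t}}=\Xi_t$ on $\mathcal{M}(F)^{\ZZ/2}$; combined with the previous paragraph this proves the theorem.

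\emph{Main obstacle.} The conceptual heart --- that crossing the perturbation curve multiplies a holonomy by $e^{\phi(\alpha)Q}$ with $e^{\alpha Q}$ the holonomy around $C$, which is Goldman's twist --- is already packaged into Proposition~\ref{isoform}, so no new analysis is needed. The work that remains is the kind of bookkeeping that is easy to get wrong: tracking base points, connecting arcs, and orientation signs carefully enough that the left multiplication delivered by Proposition~\ref{isoform} becomes the right multiplication of (\ref{nonsepgold}) (and the conjugation of (\ref{sepgold})) relative to Goldman's specific based loop $c$, and confirming that the $(f,F)$-correspondence reproduces exactly the Hamiltonian $f_c([\rho])=\psi(\cos^{-1}\Real(\rho(C)))$ rather than a reparametrization. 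One must also keep in mind that the assertion is only about the irreducible stratum, where $\mathcal{M}(F)$ is a genuine symplectic manifold and Goldman's formulas are valid; on the locus $\rho(C)=\pm1$ both flows are stationary because $\phi(0)=\phi(\pi)=0$, so no inconsistency arises.
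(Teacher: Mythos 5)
Your argument is correct and follows essentially the same route as the paper: build the pair $(f,F)$ from $\psi,\phi$ satisfying Goldman's relation (\ref{goldmanrel}), invoke Goldman's theorem, and reconcile the formulas of Proposition \ref{isoform} with (\ref{nonsepgold}) and (\ref{sepgold}) by tracking base points, arcs, and signs. The only organizational difference is that in the non-separating case you reduce to $C_{\RN 1}(1)$ and move the exponential across $\rho(A_1)$ so that $D_1$ becomes the based representative of $C$ (giving literal agreement on the standard generators), whereas the paper works with generators adapted to a general $C$ and identifies the flows after an overall conjugation; both are the same change-of-base-point exercise.
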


\begin{proof} Given $\phi\in \mathcal{X}$, choose an antiderivative  $\psi:\RR\to \RR$ for $\phi$. Then  $\psi$ is even and periodic. The pair of functions 
$$f:SU(2)\to \RR,~ f(e^{\alpha Q})=\psi(\alpha) \text{ for  $\|Q\|=1$}$$
and 
$$F:SU(2)\to su(2),~ F(e^{\alpha Q})=\phi(\alpha)Q \text{ for   $\|Q\|=1$}$$
satisfy Goldman's relation (\ref{goldmanrel}).

Recall that $c\in \pi_1(F)$ denotes the based loop obtained by connecting $C$ to the base point. As above, set $f_c([\rho])=f(\rho(c))$ and $F_c(\rho)=F(\rho(c))$.   
 Note that $\psi(\alpha)=\psi(\cos^{-1}(\Real(e^{\alpha Q})))$, so that 
$f_c([\rho])=\psi(\cos^{-1}(\Real(\rho(c)))$. 
Then $e^{tF_c(\rho)}=e^{t\phi(\alpha)Q}$, where $\rho(c)=e^{\alpha Q}$. 
The proof now nearly follows by comparing Goldman's formulae, (\ref{nonsepgold}) and (\ref{sepgold}), to the  formulae of Proposition \ref
{isoform}, replacing the fixed perturbation $\phi$ by the 1-parameter family $t\phi$.  The difference occurs because our set up does not permit us to assume the curve $C$ passes through the base point, whereas Goldman places the base point on $C$. Since $f_c$ is independent of the choice of path from  the base point to $C$, the Hamiltonian flow it induces is also independent of this choice.

Reconciling these two formulae is an exercise in changing base points, using the arc $\gamma_C$ from  the original base point to a point on $C$.       Suppose first that $C$ separates $F$ into $F_1$ and $F_2$ and the base point of $F$ is contained in $F_1$. If  $\beta$ is a loop  in $F_2$ based on $C$, then the longitude of $C$ with respect to the loop $\gamma_C\beta\gamma_C^{-1}$ is just $c=\gamma_C C\gamma_C^{-1}$. Hence Proposition \ref
{isoform} shows that $\Phi_{\pi_{C,t}}(\rho)(\gamma_C\beta\gamma_C^{-1})= e^{t\phi(\alpha)Q}\rho(\gamma_C\beta\gamma_C^{-1})
e^{-t\phi(\alpha)Q}$ where $\rho(c)=e^{\alpha Q}$. Equation (\ref{sepgold}) says $\Xi_t(\rho)(\beta)=e^{t\phi(\alpha)Q}\rho(\beta)
e^{-t\phi(\alpha)Q}$.  If $\beta $ is a loop in $F_1$ based on $C$, then $\gamma_C\beta\gamma_C^{-1}$ can be homotoped off $C$, so that 
 Proposition \ref
{isoform} shows that $\Phi_{\pi_{C,t}}(\rho)(\gamma_C\beta\gamma_C^{-1})=  \rho(\gamma_C\beta\gamma_C^{-1}) $ and  Equation (\ref{sepgold}) says $\Xi_t(\rho)(\beta)=\rho(\beta)
$. These induce the same flow on the quotient $\mathcal{M}(F)$.

The non-separating case is similar.  If $E$ is an embedded curve in $F$ passing through the base point which meets $C$ transversely once and which contains $\gamma_C$, then   Proposition \ref
{isoform} shows that $\Phi_{\pi_{C,t}}(\rho)(\gamma_CE\gamma_C^{-1})=  e^{t\phi(\alpha)Q}\rho(\gamma_CE\gamma_C^{-1}) $, whereas  Equation (\ref{nonsepgold}) says $\Xi_t(\rho)(E)=\rho(E)e^{t\phi(\alpha) Q}$. Now  $\pi_1(F)$ is generated by $E$ and curves of the form $\gamma_C\gamma_1\beta\gamma_1^{-1}\gamma_C^{-1}$, where $\gamma_1$ is a small extension of $\gamma_C$ so that the arc $\gamma_C\gamma_1$  meets $C$ transversely in one point, and $\beta$ is a loop   in $F\setminus C$.  Proposition \ref
{isoform} shows that $\Phi_{\pi_{C,t}}(\rho)(\gamma_C\gamma_1\beta\gamma_1^{-1}\gamma_C^{-1})=e^{t\phi(\alpha)Q}\rho(\gamma_C\gamma_1\beta\gamma_1^{-1}\gamma_C^{-1})e^{-t\phi(\alpha)Q}$ whereas $\Xi_t(\rho)(\gamma_1\beta\gamma_1^{-1})=\rho(\beta).$
Conjugating $\Phi_{\pi_{C,t}}(\rho)$ by $e^{-t\phi(\alpha)Q}$ identifies the flows on $\mathcal{M}(F)$.
\end{proof}

For  the remainder of this article, it will suffice to use  the perturbation functions
 $\phi(\alpha)=t\sin(\alpha)$, so that our perturbation data depends only on the curve $C$ and  the parameter $t$.   It will be convenient to compose  $\Phi_{\pi_{C_i}}$ for various perturbation curves $C_i$, using a different parameter for each perturbation curve.

\begin{df} \label{bigphi} Let ${\bf C}=(C_1,C_2,\dots,C_K)$ be an ordered list of  oriented simple closed curves in $F$.  Extend this to perturbation data in $F\times [0,1],$  parameterized by ${\bf t}=(t_1,\dots,t_k)\in \RR^K$, by pushing $C_i$ into $F\times\{\tfrac{i}{K+1}\}$ and  taking $$\pi_{{\bf t},{\bf C}}=\{ (N_{C_i}, t_i\sin(\alpha))\}.$$
 Then define  {\em the $K$-parameter family of stratum-preserving  isotopies of $\mathcal{M}(F)$ determined by ${\bf C}$}:
 $$\Phi_{\bf C}:\mathcal{M}(F)\times \RR^K\to \mathcal{M}(F),~ \Phi_{\bf C}(\rho,{\bf t})=\Phi_{\pi_{{\bf t},{\bf C}}}(\rho).$$
If we denote $(N_{C_i},t_i\sin(\alpha))$ by $\pi_{t_i,C_i}$, then 
 $$\Phi_{\bf C}(\rho,{\bf t})=\Phi_{\pi_{t_K,C_K}}\circ\Phi_{\pi_{t_{K-1},C_{K-1}}}\circ\dots\circ\Phi_{\pi_{t_{1},C_{1}}}.$$
Proposition \ref{stratpreserving} implies that $\Phi_{\bf C}$ preserves the stabilizer decomposition
of Equation ({\ref{stabde}}).  Theorem \ref{hamiltonian} implies that the restriction to $\mathcal{M}(F)^{\ZZ/2}$ is a composite of symplectomorphisms, each one Hamiltonian isotopic to the identity.
\end{df}

\section{Abundance}\label{abundancesec}

Recall that the irreducible stratum $\mathcal{M}(F)^{\ZZ/2}$ is a smooth manifold of dimension $6n-6$, the abelian stratum $\mathcal{M}(F)^{U(1)}$ is a smooth manifold of dimension $2n-2$, and the central stratum $\mathcal{M}(F)^{SU(2)}$ is a finite set, in fact has  $2^{2n}$ points  \cite{goldman1}.

  The following three propositions form the technical heart of this article.  They will be the basis of stratum-by-stratum general position arguments about the restriction map 
  $\mathcal{M}_\pi(X)\to \mathcal{M}(S_0)$ and the trace conditions  in Sections \ref{local} and \ref{main},  and the trace and anticommutativity conditions in Section \ref{piercedear}.    Let $\mathcal{C}$ denote the set of special perturbation curves  $$ \mathcal{C}=\{ C_{\RN 1}(i),C_{\RN 2}(i),C_{\RN 3}(i)\}_{i=1}^n\cup \{C_{\RN 4}(ij), \dots, C_{\RN {11}}(ij)\}_{i\ne j}$$ defined in Section \ref{snakesonaplane2} 
  and let $ {\bf C} $ denote any ordering of $\mathcal{C}$. For convenience, set $K=3n + 8n(n-1)=8n^2-5n$. 
  
Theorem  \ref{surj} shows that $r^{-1}(R(S^2,\punctures)^{U(1)})\cap \mathcal{M}(F)^{\ZZ/2}$ is a finite union of $(n-1)$-dimensional tori, namely those orbits of the ${\bf T}$ action with non-trivial stabilizer.

\begin{prop}\label{abund1} 
 If $\rho:\pi_1(F)\to SU(2)$ is any representation satisfying
 \begin{enumerate}
\item $\rho$ is irreducible,
\item the restriction of $\rho$ to $\pi_1(S_0)$ is abelian,
 \item $\Real(\rho(A_i))=0$ for $i=1,\dots ,n$,
\end{enumerate}
then the map
$$\Phi_{\bf C}(\rho,-): \RR^{K} \to \mathcal{M}(F)^{\ZZ/2}$$
 has surjective differential at $0$ in $\RR^K$. 
 The set  $r^{-1}(R(S^2,\punctures)^{U(1)})\cap \mathcal{M}(F)^{\ZZ/2}$ of conjugacy classes of   such $\rho$  is compact, being a finite union of $(n-1)$-dimensional tori.  Thus we can find a neighborhood $U_1\subset \RR^K$ of $0$ such that 
$$\Phi_C(\rho,-): U_1  \to \mathcal{M}(F)^{\ZZ/2}$$ is a submersion for  each  $\rho\in r^{-1}(R(S^2,\punctures)^{U(1)})\cap \mathcal{M}(F)^{\ZZ/2}$.
\end{prop}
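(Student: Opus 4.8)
The plan is to show that the differential $d\Phi_{\bf C}(\rho,-)_0:\RR^K\to T_{[\rho]}\mathcal{M}(F)^{\ZZ/2}=H^1(F;su(2)_{\ad{\rho}})$ hits every class, by exhibiting, for each perturbation curve $C\in\mathcal{C}$, the specific cohomology class produced by turning on the single parameter $t_C$ and showing these classes span. First I would compute the infinitesimal generators. By Theorem \ref{hamiltonian}, turning on the $i$-th parameter alone gives the Goldman twist flow along $C_i$; differentiating Goldman's formulae (\ref{nonsepgold}) and (\ref{sepgold}) at $t=0$, or equivalently differentiating the explicit formulae in Proposition \ref{isoform} with $\phi(\alpha)=t\sin\alpha$, the generating vector field at $\rho$ is the cocycle $u_{C}\in Z^1(F;su(2)_{\ad{\rho}})$ determined on the generators $A_i,D_i$ by $u_{C}(E)=\pm\sin(\alpha_{C,E})\,\ad{\rho(E)^{-1}}\!\big(Q_{C,E}\big)$ (up to the standard translation identifying $T_{\rho(E)}SU(2)$ with $su(2)$), where $\rho(\lambda_C(E))=e^{\alpha_{C,E}Q_{C,E}}$, the sign is the intersection sign recorded in Table \ref{table1}, and $u_C(E)=0$ when $C$ misses $E\cup\gamma_E$. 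The point is that each single-parameter flow is supported, in the sense of which generators move, exactly on the intersecting curves listed in Table \ref{table1}.

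Next I would exploit the hypotheses on $\rho$. Since $\rho$ restricts to an abelian representation on $\pi_1(S_0)$, all the $\rho(A_i)$ and $\rho(B_i)$ lie in a common maximal torus; conjugating, assume $\rho(A_i),\rho(B_i)\in U(1)=\langle\bbi\rangle$. Condition (iii) forces $\rho(A_i)=\pm\bbi$, so $\Real\rho(A_i)=0$, which as in the proof of Theorem \ref{surj}(iv) guarantees each $\sin\alpha_{C,E}\neq 0$ when $E=A_i$. Because $\rho$ is irreducible on $\pi_1(F)$, at least one $\rho(D_i)$ does not lie in $U(1)$; the adjoint splitting $su(2)=\RR\bbi\oplus\CC\bbj$ relative to this $U(1)$ splits $H^1(F;su(2)_{\ad\rho})=H^1(F;\RR)\oplus H^1(F;\CC_{\ad\rho})$ equivariantly, and I would check surjectivity onto each summand separately. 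The $\RR\bbi$-part is $2n$-dimensional (the abelian directions, parametrized by $H^1(F;\RR)\cong\RR^{2n}$ via the $A_i,D_i$ coordinates), and the curves $C_{\RN1}(i)$, $C_{\RN2}(i)$ produce the cocycles moving $D_i$ and $A_i$ in the $\bbi$ direction respectively, with nonzero coefficients $\pm\sin\alpha_{C,A_i}$; these alone already surject onto the $\RR$-summand. For the $\CC_{\ad\rho}$-part, which carries the "irreducible" directions, I would use the richer families $C_{\RN3}$ through $C_{\RN{11}}$: each contributes a cocycle whose value on its intersecting curves is a $\CC\bbj$-vector of the form $\ad{\rho(E)^{-1}}(\text{imaginary part of }e^{\alpha Q})$, and the freedom in choosing which pair $(i,j)$ and which of the eight families one uses produces enough independent values on the generators to span, modulo coboundaries, the complementary summand. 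Concretely I would write the matrix of these generators against a basis of $H^1(F;\CC_{\ad\rho})$ dual to the cells of $K_F$ and verify it has full rank using the distinctness of the $\rho(\lambda_C(E))$ and the fact that $\rho$ is irreducible.

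I expect the main obstacle to be precisely the last span computation for the $\CC_{\ad\rho}$-summand: unlike the $\RR$-part, here one cannot simply "move one generator at a time" since each twist flow moves a prescribed word-combination of generators and the translation-to-$su(2)$ normalization mixes in factors $\ad{\rho(E)^{-1}}$ that depend on $\rho$. The way I would handle this is to reduce to a linear-algebra statement over $\CC$ that is \emph{independent of the particular irreducible $\rho$}: show that the $K\times(\dim H^1(F;\CC_{\ad\rho}))$ matrix of coefficients has a submatrix which is, after the $\ad\rho$-twists are absorbed, upper-triangular with nonzero diagonal — the diagonal entries being exactly the $\sin\alpha_{C,A_i}\neq 0$ from condition (iii) together with nonvanishing of $\rho(D_i)$-off-diagonal entries forced by irreducibility. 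Once surjectivity holds at every such $\rho$, the compactness of $r^{-1}(R(S^2,\punctures)^{U(1)})\cap\mathcal{M}(F)^{\ZZ/2}$ — which is a finite union of $(n-1)$-tori by Theorem \ref{surj}(i)–(ii) — together with openness of the submersion condition yields a single neighborhood $U_1\subset\RR^K$ of $0$ on which $\Phi_{\bf C}(\rho,-)$ is a submersion simultaneously for all such $\rho$, completing the proof.
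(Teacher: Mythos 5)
Your overall skeleton (identify the derivative of each one-parameter twist flow with a cocycle $z_C$ read off from Table \ref{table1}, show the resulting cocycles span, then use compactness of the tori plus openness of the submersion condition) is the same as the paper's, but the two claims on which your span argument rests are not correct, and the span computation itself — which is the entire content of the proposition — is left unexecuted. First, the splitting you invoke is wrong as stated: since $\rho$ is irreducible on $\pi_1(F)$ while abelian only on $\pi_1(S_0)$, its image lies in the normalizer $U(1)\cup \bbj\, U(1)$ but not in $U(1)$ (by the normal form of Lemma \ref{ir-re}, some $\rho(D_i)=\bbj e^{\alpha_i\bbi}$). Hence the $\RR\bbi$-summand of $su(2)$ carries a nontrivial $\ZZ/2$-character, so the relevant summand is $H^1(F;\RR_\sigma)$ of dimension $2n-2$, not $H^1(F;\RR)\cong\RR^{2n}$, and the values of the family $\RN 1$, $\RN 2$ cocycles are not confined to the $\bbi$-direction (e.g.\ $z_{C_{\RN 1}(j)}(A_j)=\pm\bbj e^{\alpha_j\bbi}$ when $\ep_j\ne\delta_j$). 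Your claim that these two families "alone already surject onto the $\RR$-summand" therefore does not hold.

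Second, the nonvanishing you lean on — "$\sin\alpha_{C,E}\neq 0$ when $E=A_i$, guaranteed by $\Real\rho(A_i)=0$" — is false: the angle $\alpha_{C,E}$ is the angle of $\rho(\lambda_C(E))$, and the longitudes $\lambda_C(A_i)$ are words such as $A_iD_iA_i^{-1}$ or $A_iD_i$, whose $\rho$-images can perfectly well be $\pm 1$ (e.g.\ $\ep_j=\delta_j$ with $\alpha_j\in\{0,\pi\}$, or $\sin(\alpha_j-\alpha_1)=0$ for the two-sector curves). This is precisely why the paper's proof proceeds by a case analysis, choosing different curves (e.g.\ replacing $z_{C_{\RN 1}(j)}$ by $z_{C_{\RN 3}(j)}\pm z_{C_{\RN 2}(j)}$ when $\sin\alpha_j=0$, or switching between $C_{\RN 9}(1j)$ and $C_{\RN{10}}(1j)$) so as to obtain, for each $j\ge 2$, triples of cocycles whose values on $D_j$ (resp.\ $A_j$) form a basis of $su(2)$ while vanishing on the other generators, plus three cocycles supported on $\{A_1,D_1\}$; together these give a $(6n-3)$-dimensional subspace of $Z^1(F;su(2)_{\ad\rho})$, hence all of it, hence surjectivity onto $H^1$. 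Your proposed "upper-triangular matrix with diagonal entries $\sin\alpha_{C,A_i}$" cannot work uniformly in $\rho$ because those entries genuinely vanish at degenerate points of the reducible-boundary tori; without the curve-by-curve case analysis (or some substitute for it) the argument has a gap exactly where the proposition is hardest. The final compactness/openness step is fine.
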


\begin{prop}\label{abund2} 
 If   $\rho:\pi_1(F)\to SU(2)$ is any representation satisfying
 \begin{enumerate}
\item $\rho$ is irreducible,
\item the restriction of $\rho$ to $\pi_1(S_0)$ is irreducible,
 \item $\Real(\rho(A_i))=0$ for $i=1,\dots ,n$,
\end{enumerate}
then the composite: 
$$T\circ \Phi_C(\rho,-): \RR^K \to \RR^n$$
has surjective differential at 
$0\in \RR^K$,  with $T$ denoting the trace map of Equation {\rm (\ref{Tmap})}.
\end{prop}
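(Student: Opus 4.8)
The plan is to reduce the statement to an explicit computation combining Proposition~\ref{isoform} with the intersection data of Table~\ref{table1}, and then to a linear-algebra argument. Since a zero perturbation function is trivial, $\Phi_{\pi_{0,C}}=\mathrm{id}_{\mathcal{M}(F)}$ for every curve $C$, so the chain rule makes the first-order behaviour of the composite $\Phi_{\bf C}(\rho,{\bf t})=\Phi_{\pi_{t_K,C_K}}\circ\dots\circ\Phi_{\pi_{t_1,C_1}}(\rho)$ at ${\bf t}={\bf 0}$ decouple: $\partial_{t_j}\Phi_{\bf C}(\rho,{\bf t})\big|_{{\bf 0}}=\partial_t\Phi_{\pi_{t,C_j}}(\rho)\big|_{t=0}$, with no contribution from the other factors. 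Hence the $j$th column of the Jacobian of $T\circ\Phi_{\bf C}(\rho,\cdot)$ at ${\bf 0}$ is the vector
$$v_{C_j}:=\Big(\tfrac{d}{dt}\big|_{0}\Real\big(\Phi_{\pi_{t,C_j}}(\rho)(A_1)\big),\ \dots,\ \tfrac{d}{dt}\big|_{0}\Real\big(\Phi_{\pi_{t,C_j}}(\rho)(A_n)\big)\Big)\in\RR^n,$$
and the claim is precisely that $\{v_C\}_{C\in\mathcal{C}}$ spans $\RR^n$, i.e.\ that any $\lambda\in\RR^n$ with $\langle\lambda,v_C\rangle=0$ for all $C\in\mathcal{C}$ must vanish. (Hypothesis (1) enters only to guarantee $\rho\in\mathcal{M}(F)^{\ZZ/2}$, so that $\Phi_{\bf C}(\rho,\cdot)$ lands in the smooth stratum.)

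Next I would compute the entries of $v_C$ from Proposition~\ref{isoform}, specialized to the perturbation function $\phi(\alpha)=t\sin\alpha$ of Definition~\ref{bigphi}, for which $e^{\phi(\alpha)Q}=\Real(e^{\alpha Q})+t\sin(\alpha)Q$. If $C$ misses $A_i\cup\gamma_{A_i}$ then $\Phi_{\pi_{t,C}}(\rho)(A_i)=\rho(A_i)$, and if $C$ meets only the arc $\gamma_{A_i}$ then $\Phi_{\pi_{t,C}}(\rho)(A_i)$ is a conjugate of $\rho(A_i)$; in both cases $\Real\,\Phi_{\pi_{t,C}}(\rho)(A_i)$ is constant in $t$. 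Thus $(v_C)_i=0$ unless $A_i$ occurs as an intersecting curve of $C$ in Table~\ref{table1}, and in that case, using $\Real\rho(A_i)=0$,
$$(v_C)_i=(A_i\cdot C)\,\Real\!\big(\rho(\lambda_C(A_i))\,\rho(A_i)\big)=-(A_i\cdot C)\,\big\langle \rho(\lambda_C(A_i))-\Real\rho(\lambda_C(A_i)),\ \rho(A_i)\big\rangle,$$
a quaternionic expression in $\rho(A_\ell),\rho(D_\ell)$ read directly off the ``Longitude'' and ``sign'' columns. In particular the two curves in the $i$th sector meeting $A_i$ give, after a short computation,
$$(v_{C_{\RN 3}(i)})_i=\Real\rho(D_i),\qquad (v_{C_{\RN 1}(i)})_i=-\Real\!\big(\rho(A_i)\rho(D_i)\big)=\big\langle \rho(A_i),\ \rho(D_i)-\Real\rho(D_i)\big\rangle,$$
and both $v_{C_{\RN 1}(i)}$ and $v_{C_{\RN 3}(i)}$ are supported in the $i$th coordinate.

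The heart of the proof is then the linear-algebra step. Let $\lambda$ be orthogonal to all $v_C$. The two boxed formulas force: if $\lambda_i\neq 0$ then $\Real\rho(D_i)=0$ and $\rho(A_i)\perp\rho(D_i)$; for such a \emph{degenerate} index $i$, $\rho(D_i)$ is a purely imaginary unit quaternion orthogonal to the purely imaginary unit $\rho(A_i)$, whence $\rho(B_i)=\rho(D_i)\rho(A_i)\rho(D_i)^{-1}=-\rho(A_i)$. One now feeds in the cross-sector curves $C_{\RN 4}(ij),\dots,C_{\RN{11}}(ij)$ (for all ordered pairs of sectors); orthogonality of $\lambda$ to their $v_C$'s — whose nonzero entries involve the $\rho(D_j)$ appearing in the tabulated longitudes, hence the conjugating elements of $\rho$ — yields, for each degenerate $i$, successively sharper constraints: first that all $\rho(A_\ell)$ lie in the $2$-plane spanned by $\rho(A_i)$ and $\rho(D_i)$ in $su(2)$, then that each $\rho(A_\ell)$ lies in the finite set $\{\pm\rho(A_i),\pm\rho(D_i)\}$, and then (using curves such as $C_{\RN 7}(ij)$, whose $A_i$-longitude involves both $D_i$ and $D_j$) that further coordinates $\lambda_\ell$ vanish. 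Iterating peels off the degenerate indices one at a time, terminating either with $\lambda=0$ or in a configuration in which all of $\rho(A_\ell),\rho(B_\ell)$ share a common axis — impossible, since by hypothesis (2) $\rho|_{\pi_1(S_0)}$ is irreducible. An equivalent way to organize the same computation is to invoke Theorem~\ref{hamiltonian}, writing $\tfrac{d}{dt}\big|_0(T_i\circ\Phi_{\pi_{t,C}})=\{f_C,T_i\}$ on $\mathcal{M}(F)^{\ZZ/2}$ and evaluating these Poisson brackets via Goldman's intersection formula in \cite{goldman2}; it produces the same expressions.

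I expect this last step to be the real work and the main obstacle: one must keep precise track of which tabulated longitude sees which $\rho(A_\ell)$, exploit the placement of the eleven curve families across the cyclically ordered sectors, and verify that this fixed finite family is rich enough to annihilate every candidate $\lambda\neq 0$. The reduction, the derivative formula from Proposition~\ref{isoform}, and the single-sector identities are all routine.
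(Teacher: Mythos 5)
Your reduction is the same one the paper uses: at ${\bf t}={\bf 0}$ the columns of the Jacobian are the individual derivatives $\tfrac{d}{dt}\big|_0\Real\big(\Phi_{\pi_{t,C}}(\rho)(A_i)\big)$, computed from Proposition \ref{isoform} and Table \ref{table1}, and your formula $(v_C)_i=(A_i\cdot C)\,\Real\big(\rho(\lambda_C(A_i))\rho(A_i)\big)$ (valid because $\Real\rho(A_i)=0$), together with the single-sector evaluations $(v_{C_{\RN 3}(i)})_i=\pm\cos\beta_i$ and $(v_{C_{\RN 1}(i)})_i=\pm\sin\beta_i\Real(P_iR_i)$, agrees with the paper's Cases 1--3. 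The conclusion that a ``degenerate'' index forces $\rho(D_i)$ to be purely imaginary and orthogonal to $\rho(A_i)$, hence $\rho(B_i)=-\rho(A_i)$, is also exactly the paper's dichotomy. The difference is only organizational: the paper exhibits, for each row $i$, one explicit curve so that the resulting $n\times n$ matrix $[M_{ij}]$ is diagonal except possibly in a single column $j_0$, while you run the dual annihilator argument.

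The genuine gap is the step you yourself flag as ``the real work'': the case where the single-sector curves fail at index $i$. Your sketch (``successively sharper constraints \dots\ iterating peels off the degenerate indices one at a time'') is not an argument, and its intermediate claims are not what the listed curves actually deliver; in particular nothing in the sketch isolates the mechanism by which hypothesis (2) enters. The paper closes this in two concrete steps. If some $P_j\ne\pm P_i$, then one of $\Real(R_iP_j)$, $\Real(P_iR_iP_j)$ is nonzero and the curve $C_{\RN 6}(ij)$ or $C_{\RN 4}(ji)$ gives a vector supported \emph{only} in coordinate $i$ with that nonzero entry, so $\lambda_i=0$ at once. If instead all $P_j=\pm P_i$, irreducibility on $\pi_1(S_0)$ produces a lowest index $j_0$ with $\rho(B_{j_0})\ne\pm P_i$; then $\sin\beta_{j_0}\ne 0$ and one of $C_{\RN 5}(j_0i)$, $C_{\RN 7}(j_0i)$ gives a vector whose only nonzero entries are at $i$ and $j_0$, equal up to sign to $\sin\beta_{j_0}\Real(RR_{j_0})$ or $\sin\beta_{j_0}\Real(RPR_{j_0})$. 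The missing observation that makes this finish in your dual formulation is that $j_0$ is automatically \emph{non}-degenerate: degeneracy at $j_0$ would force $\rho(B_{j_0})=-\rho(A_{j_0})=\mp P_i$, contradicting the choice of $j_0$. Hence $\lambda_{j_0}=0$ already from the single-sector curves at $j_0$, and the two-entry relation then forces $\lambda_i=0$. (Equivalently, in the paper's matrix form: every off-diagonal entry lives in the single column $j_0$, and $j_0$ never falls into this exceptional case, so the matrix is diagonal-plus-one-column and invertible.) Without this identification of $j_0$ and the fact that the extra nonzero entry only ever appears in that one fixed coordinate, your iteration has no guarantee of terminating with $\lambda=0$, so as written the proof is incomplete precisely where hypothesis (2) is needed.
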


\begin{prop}\label{abund3}
If 
 $\rho:\pi_1(F)\to SU(2)$ is any representation satisfying
 \begin{enumerate}
\item $\rho$ is abelian,
 \item $\Real(\rho(A_i))=0$ for $i=1,\dots ,n$,
\end{enumerate}
then the composite:
$$  \RR^{K} \xrightarrow{\Phi_{\bf C}(\rho,-) }\mathcal{M}(F) \xrightarrow{T}  \RR^n$$
has surjective differential at 
$0\in \RR^K$.   
 The set $  \mathcal{M}(F)^{U(1)}  \cap T^{-1} (0)$  of conjugacy classes of such $\rho$  
  is compact, and hence there exists a neighborhood $U_3$ of $0\in \RR^K$ such that    $$ T\circ \Phi_{C}(\rho, -): U_3\to \RR^n  $$   is a submersion  for every $\rho\in  \mathcal{M}(F)^{U(1)} \cap T^{-1} (0).$

 \end{prop}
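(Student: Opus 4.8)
The plan is to compute the Jacobian matrix $M=d\!\left(T\circ\Phi_{\mathbf C}(\rho,-)\right)_0$ at $0\in\RR^K$ entrywise and then to exhibit an invertible $n\times n$ submatrix. Writing $\Phi_{\mathbf C}(\rho,\mathbf t)=\Phi_{\pi_{t_K,C_K}}\circ\cdots\circ\Phi_{\pi_{t_1,C_1}}(\rho)$ and using that $\Phi_{\pi_{0,C_j}}=\mathrm{id}$ for every $j$, setting all $t_j=0$ except $t_k$ gives $\Phi_{\mathbf C}(\rho,t_k e_k)=\Phi_{\pi_{t_k,C_k}}(\rho)$, so the $k$-th column of $M$ is $\tfrac{d}{dt}\big|_0 T\!\left(\Phi_{\pi_{t,C_k}}(\rho)\right)$; that is, $M_{ik}=\tfrac{d}{dt}\big|_0\Real\!\left(\Phi_{\pi_{t,C_k}}(\rho)(A_i)\right)$. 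Each such entry is computed from Proposition \ref{isoform} with the perturbation profile $\phi(\alpha)=t\sin\alpha$; note that in this case $e^{\pm\phi(\alpha)Q}=e^{\pm t\,\mathrm{Im}(\rho(\lambda_{C_k}(E)))}$, where $\mathrm{Im}(g)=g-\Real(g)$, so the formulas of Proposition \ref{isoform} are smooth in $(t,\rho)$ on each stratum.

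Since $\rho$ is abelian I may conjugate it into the circle $\{e^{s\bbi}\}$ and write $\rho(A_i)=e^{a_i\bbi}$, $\rho(D_i)=e^{d_i\bbi}$; hypothesis (ii) forces $\cos a_i=0$, so $\sin a_i=\pm1\ne 0$. Now by Proposition \ref{isoform}: if $C_k$ misses $A_i$, then $\Phi_{\pi_{t,C_k}}(\rho)(A_i)=\rho(A_i)$ (when $C_k$ meets only the arc $\gamma_{A_i}$, the conjugating factor lies in $\{e^{s\bbi}\}$, which commutes with $\rho(A_i)$), hence $M_{ik}=0$; and if $C_k$ meets $A_i$ transversely in one point with $A_i\cdot C_k=\epsilon\in\{\pm1\}$, then $\Phi_{\pi_{t,C_k}}(\rho)(A_i)=e^{\epsilon t\,\mathrm{Im}(\rho(\lambda_{C_k}(A_i)))}\rho(A_i)=e^{(a_i+\epsilon t\sin\beta_{ki})\bbi}$, where $\Real(\rho(\lambda_{C_k}(A_i)))=\cos\beta_{ki}$, so $M_{ik}=-\epsilon(\sin a_i)\sin\beta_{ki}$, which is nonzero iff $\rho(\lambda_{C_k}(A_i))\ne\pm1$. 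I would then invoke Table \ref{table1} and the geometry of Section \ref{snakesonaplane2}: the curves $C_{\RN 1}(i)$ and $C_{\RN 3}(i)$ lie in the $i$-th sector, meet $A_i$ transversely, and, being disjoint from $A_{i'}$ and $\gamma_{A_{i'}}$ for $i'\ne i$, affect only the $i$-th coordinate of $T$; their longitudes with respect to $A_i$ are $A_iD_iA_i^{-1}$ and $A_iD_i$, so $\rho(\lambda_{C_{\RN 1}(i)}(A_i))=\rho(D_i)=e^{d_i\bbi}$ and $\rho(\lambda_{C_{\RN 3}(i)}(A_i))=\rho(A_i)\rho(D_i)=e^{(a_i+d_i)\bbi}$. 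Since $\sin d_i$ and $\sin(a_i+d_i)=\pm\cos d_i$ cannot both vanish, for each $i$ at least one of $C_{\RN 1}(i),C_{\RN 3}(i)$ supplies a column $k(i)$ of $M$ whose unique nonzero entry is $M_{i,k(i)}$. The columns $k(1),\dots,k(n)$ are distinct (they lie in distinct sectors), so $M$ contains a diagonal invertible $n\times n$ block; hence $M$ is onto $\RR^n$, which is the first claim.

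For the second claim I would first show that $S:=\mathcal{M}(F)^{U(1)}\cap T^{-1}(0)$ is compact. A limit of reducible representations is reducible, so $\overline{\mathcal{M}(F)^{U(1)}}\subset\mathcal{M}(F)^{U(1)}\cup\mathcal{M}(F)^{SU(2)}$; but a central representation sends every $A_i$ to $\pm1$, hence has $\Real(\rho(A_i))=\pm1\ne0$ and does not lie in $T^{-1}(0)$. Therefore $S$ is closed in the compact space $\mathcal{M}(F)$, hence compact. Since $\Phi_{\mathbf C}$ restricts to a smooth map of the abelian stratum (Proposition \ref{stratpreserving}) and the formulas above are smooth in the parameters, $(\rho,\mathbf t)\mapsto d\!\left(T\circ\Phi_{\mathbf C}(\rho,-)\right)_{\mathbf t}$ is continuous near $S$; surjectivity of a linear map is an open condition, so the first claim yields, for each $\rho\in S$, a neighborhood $V_\rho\times W_\rho$ of $(\rho,0)$ in $\mathcal{M}(F)^{U(1)}\times\RR^K$ on which this differential remains onto. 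Covering $S$ by finitely many $V_{\rho_1},\dots,V_{\rho_m}$ and setting $U_3=W_{\rho_1}\cap\cdots\cap W_{\rho_m}$ produces the desired neighborhood of $0$.

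The heart of the argument, and its only real obstacle, is the combinatorial step in the middle paragraph: one must be sure that among all $K$ perturbation curves only those meeting $A_i$ transversely can change $\Real(\rho(A_i))$ (curves meeting $\gamma_{A_i}$ alone contribute nothing precisely because $\rho$ is abelian), and one must read off from Table \ref{table1} that the two candidate longitude values for row $i$ are $\rho(D_i)$ and $\rho(A_i)\rho(D_i)$, whose imaginary parts are $\sin d_i\cdot\bbi$ and $\pm\cos d_i\cdot\bbi$ and hence cannot vanish simultaneously. The only other point requiring a little care is that $\Phi_{\mathbf C}$ and $T$ be genuinely smooth along the abelian stratum near $S$, which is exactly why the perturbation profile was fixed to be $\phi(\alpha)=t\sin\alpha$.
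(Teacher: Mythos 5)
Your proposal is correct and follows essentially the same route as the paper: pick $C_{\RN 1}(i)$ or $C_{\RN 3}(i)$ in each sector (according to whether $\sin d_i$ or $\cos d_i$ is nonzero, which is the paper's Case 1/Case 2 dichotomy from Proposition \ref{abund2}) to produce a diagonal invertible $n\times n$ block of the Jacobian, and obtain compactness of $\mathcal{M}(F)^{U(1)}\cap T^{-1}(0)$ by noting central representations cannot be traceless. Your final covering argument merely spells out the openness-plus-compactness step the paper asserts, so no substantive difference remains.
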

  Since the proofs of Propositions \ref{abund2} and \ref{abund3} require less machinery, we will provide these before taking up the proof of Proposition \ref{abund1}.   
 
\bigskip 
\noindent{\em Proof of Proposition \ref{abund2}.}  
Let $\rho\in \mathcal{M}(F)^{\ZZ/2}$ be a representation that restricts to an irreducible representation on $\pi_1(S_0)$, and that satisfies $T(\rho)=0$.   For each $j=1,\dots, n$, 
 $\Real(\rho(A_j))=0$ and so  $P_j=\rho(A_j)$
 is a purely imaginary unit quaternion.  Also write $\rho(D_j)=e^{\beta_jR_j}$ for $R_j$ a purely imaginary unit quaternion and $\beta_j\in [0,\pi]$.

 We next choose a sequence of curves   ${\bf C'}=(C_1, C_2,\dots, C_n)$ in $\mathcal{C}$ and compute $$M_{ij}=\left. \frac{d}{dt}\Real(\Phi_{C_i,t}(\rho)(A_j))\right|_{t=0}, ~j=1,\dots, n.$$
The choice of   $C_i$ is made so that this derivative $M_{ij}$ is nonzero for $j=i$ and there is one fixed index $j_0$ such that $M_{ij}=0$ if $j\not\in \{ i, j_0 \}$, which implies that the matrix $\left[ M_{ij} \right]$ has full rank.     In particular,  this shows that the differential of $T\circ \Phi_{\bf C'}(\rho,{\bf t})$ at $\rho$  maps onto $\RR^n$.  
   
   \medskip
\noindent{\bf Case 1.} {\em Suppose that $\Real(P_iR_i)\ne 0$ and $\sin \beta_i\ne 0$.}    
Take $C_i=C_{\RN 1}(i)$, so that $C_i$ intersects $A_i$ once and misses $A_j,~j\ne i$,  Referring to Table \ref{table1} one sees that  $ \rho(\lambda_{C_i}(A_i)) =\rho(A_iD_iA_i^{-1})=P_ie^{\beta_iR_i}P_i^{-1}$, and $A_i\cdot C_i=-1$.   Proposition \ref{isoform} implies that  
$$\Phi_{C_{i,t}}(\rho(A_i))=
P_ie^{-t\sin (\beta_i)R_i}P_i^{-1}\rho(A_i)=P_ie^{-t\sin (\beta_i)R_i}$$
and so
$$\Real(\Phi_{C_{i,t}}(\rho(A_i)))=-\sin (t\sin\beta_i)\Real(P_iR_i) $$
which has non-zero derivative at $0$.  Since $C_i$ misses $A_j$   as well as the path from the base point to $A_j$, for $j\ne i$,  the first   assertion of Proposition \ref{isoform}  implies that $ \Phi_{C_{i,t}}(\rho(A_j)) = \rho(A_j)$.   Hence, $M_{ii}\neq 0$ and $M_{ij}=0$ for $j\neq i$.

 \medskip 

\noindent{\bf Case 2.}  {\em Suppose that $\sin\beta_i=0$ so that $\rho(D_i)=\pm 1$.} Take $C_i= C_{\RN 3}(i)$. Again $C_i$ meets $A_i$ transversely in one point and  misses  $A_j  $   and also misses the paths from the base point to $A_j$, for all $j\ne i$.  
From Table \ref{table1} we read $   \rho(\lambda_{C_i}(A_i))=\rho(A_i D_i) =\pm P_i=e^{\pm \frac\pi 2 P_i}$. Proposition \ref{isoform} implies that $M_{ij}=0$ for $j\neq i$, and that
$$\Real(\Phi_{C_{i,t}}(\rho(A_i)))=\Real(e^{-t\sin (\pm \frac\pi 2)P_i }P_i)=\pm \sin t.
$$
This has non-zero derivative at 0 and so  we have $M_{ii}\neq 0$. 
  \medskip
  
 \noindent{\bf Case 3.} {\em Suppose that $\Real(P_iR_i)= 0$ and $\cos \beta_i\ne 0$. }  Set $C_i=C_{\RN 3}(i)$, as in Case 2.  Then 
 $$ \rho(\lambda_{C_i}(A_i))=\rho(A_i D_i)= P_i e^{\beta_iR_i}.  $$ 

 Since $\Real(P_i)=\Real(P_iR_i)=0$, $\Real(P_ie^{\beta_iR_i})
 =0$. Hence $ \rho(\lambda_{C_i}(A_i))=
 e^{\frac\pi 2 P_i e^{\beta_iR_i}}$ and 
 $$\Real(\Phi_{C_{i,t}}(\rho(A_i)))=\Real(e^{ \mp   t   P_ie^{\beta_iR_i}} P_i)=\pm\sin t\cos\beta_i 
$$
which has non-zero derivative at $0$, so,  
again, we have $M_{ii}\neq 0$ and $M_{ij}=0$ for $j\neq i$.  
  \medskip
 
  \noindent{\bf Case 4.}  {\em Suppose $\Real(P_iR_i)= 0$ and $\cos \beta_i= 0$ and that there exists $j\ne i$ so that $P_j\ne \pm P_i$.}   Since $P_i$, $R_i$, and $P_iR_i$ form an orthonormal basis of $su(2)$, and $P_j\ne \pm P_i$, 
  $$\text{ either }\Real(R_iP_j)=-\langle R_i, P_j\rangle\ne 0\text{ or }\Real(P_iR_iP_j)=-\langle P_iR_i, P_j\rangle\ne 0.$$
  
If $\Real(R_iP_j)\ne 0$, choose $C_i=C_{\RN 6}(ij)$.  Then 
$$ \rho(\lambda_{C_i}(A_i)) =\rho(A_iA_jA_iD_iA_i^{-1})=P_iP_jP_iR_i(-P_i)=-P_iP_jR_i.$$
Choose $\alpha, Q$ so that  $-P_iP_jR_i=e^{\alpha Q}$.   Then 
$$\Real(\Phi_{C_{i,t}}(\rho)(A_i))=\Real(e^{\pm t\sin(\alpha)Q}P_i)=\pm\sin(t\sin(\alpha))\Real(QP_i).
$$
 This has derivative  at $t=0$ equal to 
 $$  \pm \sin(\alpha) \Real(QP_i)=\pm\Real(e^{\alpha Q}P_i)=\pm\Real(  P_iP_jR_iP_i)=\pm\Real(R_iP_j)\ne 0.
$$   

If instead, $\Real(P_iR_iP_j)\ne 0$, choose $C_i=C_{\RN 4}(ji)$.  Then 
$$ \rho(\lambda_{C_i}(A_i))=\rho(D_i^{-1}A_j)=
-R_iP_j .$$
Choose $\alpha', Q'$ so that  $-R_iP_j=e^{\alpha' Q'}$.
$$\Real(\Phi_{C_{i,t}}(\rho)(A_i))=\Real(e^{\pm t\sin(\alpha')Q'}P_i)=\pm\sin(t\sin(\alpha'))\Real(Q'P_i).
$$

 This has derivative  at $t=0$ equal to 
 $$   \sin(\alpha')\Real(Q'P_i)=\Real(e^{\alpha Q'}P_i)= -\Real( R_iP_jP_i)=-
 \Real(P_iR_iP_j)\ne 0.
 $$   
 Either way,  $M_{ii}\neq 0$.  When $j\neq i$,   then $C_i$ does not intersect $A_j$.  If $C_i$ also misses the path from the base point to $A_j$ then $M_{ij}=0$ by the first part of Proposition \ref{isoform}. But even if $C_i$ intersects the path from the base point to $A_j$, the third part of Proposition \ref{isoform} shows that $ \Phi_{C_{i,t}}(\rho)(A_j)$ is a conjugate of $\rho(A_j)$ for all $t$, so that 
 $\Real(\Phi_{C_{i,t}}(\rho)(A_i))$ is independent of $t$ and hence again $M_{ij}=0$.

   \medskip
   
     \noindent{\bf Case 5.}    {\em Suppose $\Real(P_iR_i)= 0$ and $\cos \beta_i= 0$ and that for all $j=1,\dots, n$,  $P_j= \pm P_i$.} 
     For clarity, write $P=P_i$ and $R=R_i$, so $P_j=\pm P$ for all $j$.
     By hypothesis,  the restriction of $\rho$ to $\pi_1(S_0)$ is non-abelian, so  there exists at least one index   $j\in \{ 1, \dots, n\}$ such that $\rho(B_j)\ne \pm P$;  fix $j_0$ to be the lowest such $j$.   Note that Case 5 does not arise for $i=j_0$,  since $\rho(B_i)=\rho(D_iA_iD_i^{-1})=RP(-R)=-P$.  

 The fact that  $\rho(B_{j_0})=\rho(D_{j_0}A_{j_0}D_{j_0}^{-1})\ne \pm P$ implies that  $\rho(D_{j_0})=e^{\beta_{j_0} R_{j_0}}\ne \pm 1$, and hence  $\sin\beta_{j_0}\ne 0$.   It also implies that
 $$\text{ either }\Real(RPR_{j_0})\ne 0 \text{ or }\Real(RR_{j_0})\ne0.$$

 If $\Real(RR_{j_0})\ne 0$, choose $C_i=C_{\RN 7}(j_0i)$. A calculation just like in Case 4  shows that 
 $$ M_{ij}= 
 \left. \frac{d}{dt}  \Real(\Phi_{C_{i,t}}(\rho)(A_j))\right |_{t=0}=
 \begin{cases}
 \pm \sin \beta_{j_0}\Real(RR_{j_0})\ne 0 &j=i \text{ or } j_0\\
 0& j\ne i, {j_0}. \end{cases}$$

If $\Real(RPR_{j_0})\ne 0$, choose $C_i= C_{\RN 5}(j_0i)$. Again one    calculates 
$$  M_{ij}=  \left. \frac{d}{dt} \Real(\Phi_{C_{i,t}}(\rho)(A_j))\right |_{t=0}=
 \begin{cases}
 \pm \sin \beta_{j_0}\Real(RPR_{j_0})\ne 0 &j=i\text{ or } j_0\\
 0& j\ne i, {j_0}. \end{cases}$$

\bigskip 

 It follows that, for the curves $C_i$ constructed according to Cases 1-5,  the matrix $[M_{ij}]$ has nonzero diagonal entries, and all other entries zero except for the $j_0$th column, so it  has nonzero determinant.   Thus, for ${\bf C'} = (C_1, \dots, C_n)$ so constructed, 
this matrix $[M_{ij}]$, which  is the differential at ${\bf t} = {\bf 0} $ of the composite
$$\RR^n\xrightarrow{\Phi_{\bf C'}(\rho, {\bf t})}\mathcal{M}(F)\xrightarrow{T} \RR^n,$$ has  full rank. This is an open condition, so it holds near ${\bf 0}$ as well.

  Reordering the curves in ${\bf C'}$ does not change the rank of this differential, and adding more perturbation curves cannot  decrease the rank, and hence the composite
$$\RR^K\xrightarrow{\Phi_{\bf C}(\rho, {\bf t})}\mathcal{M}(F)\xrightarrow{T} \RR^n$$ has  full rank, completing the proof.
\qed

\bigskip

\noindent{\em Proof of Proposition \ref{abund3}.} Assume that  $\rho:\pi_1F\to SU(2)$ is abelian and $\Real(\rho(A_i))=0$ for all $i=1,\dots , n$. Then we can choose a purely imaginary unit quaternion $P$ and real numbers  $\beta_i\in [0,2\pi )$   so that 
$\rho(A_i)=\pm P$  and $\rho(D_i)=e^{\beta_i P}$.  

For each  $i=1,\dots,n$, one of Cases 1 and 2 in the proof of Proposition \ref{abund2} applies, so one may choose $C_i=C_{\RN 1}(i)$ or $C_i=C_{\RN 3}(i)$ so that
$$M_{ij} = \left. \frac{d}{dt}\Real(\Phi_{C_i,t}(\rho)(A_j))  \right|_{t=0}=
\begin{cases} a_{i}\ne 0 & j=i\\
0& j\ne i\end{cases} $$
As in the proof of Proposition \ref{abund2}, this produces a sequence  ${\bf C'}=(C_1, \dots, C_n)$  such that the differential at ${\bf t}=0$ of the composite 
$$\RR^{n}\xrightarrow{\Phi_{\bf C'}(\rho,{\bf t})}\mathcal{M}(F)\xrightarrow{T} \RR^n$$
is represented by a diagonal matrix with nonzero diagonal entries, and hence is surjective. Again, this  shows that the differential near ${\bf t}=0$ of the composite  $$\RR^K\xrightarrow{\Phi_{\bf C}(\rho, {\bf t})}\mathcal{M}(F)\xrightarrow{T} \RR^n$$ has  full rank, as required.  

 That $  \mathcal{M}(F)^{U(1)}  \cap T^{-1} (0)$ is compact follows from the fact that  $  \mathcal{M}(F)^{SU(2)} \cup   \mathcal{M}(F)^{U(1)} $ is compact and that
$$ \mathcal{M}(F)^{U(1)}  \cap T^{-1} (0)= \left( \mathcal{M}(F)^{SU(2)} \cup \mathcal{M}(F)^{U(1)}\right)  \cap T^{-1} (0).$$ 
\qed

\medskip

  Finally, we return to the proof of Proposition \ref{abund1}.  Because this involves showing a map to $\mathcal{M}(F)^{\ZZ/2}$ is a submersion, and the tangent space of this manifold is identified with first cohomology, this will require identifying cocycles associated to paths of representations.    
  
  Referring back to Section \ref{tangentspaces} and \ref{snakesonaplane},  we fix the standard set $\{A_i,D_i\}_{i=1}^n$ of generators of $\pi_1(F)$,  represented by the embedded curves together with their indicated paths from the base point shown in Figure \ref{drawingfig},  so we can identify     $$C^1(F;su(2)_{\ad{\rho}})=\operatorname{Funct} (\{A_i,D_i\}_{i=1}^n,su(2)).$$
  If $\rho_t, t\in (-\ep,\ep)$ is a path of representations of $\pi_1(F)$ and $z\in\operatorname{Funct} (\{A_i,D_i\}_{i=1}^n,su(2))$  is a function so that 
$$\rho_t(E)=e^{tz(E)}\rho_0(E) \text{ for all } E\in \{A_i,D_i\}, $$ 
or, more generally, so that
$$z(E)=\left. \frac{d}{dt}~\rho_t (E)\right|_{t=0}  \left( \rho_0 (E) \right)^{-1},$$ then $z$ is a 1-cocycle, and the tangent vector to $\rho_t$ at $t=0$ is identified with this cocycle via the identification of Equation (\ref{andreweil}).

\medskip

Suppose that $C$ is a special perturbation curve.  Consider the 1-parameter family of perturbations $\pi_{C_t}=(N_C, t\sin(\alpha))$.  This determines an isotopy 
$t\mapsto \Phi_{\pi_{C_t}}:\mathcal{M}(F)\to \mathcal{M}(F)$ with $\Phi_{\pi_{C_0}}={\rm Id}$.   Given $E\in \{A_i,D_i\}$, let $\gamma_E$ denote its path from the base point. Proposition \ref{prop1} shows that
\begin{equation}
\label{path}\Phi_{\pi_{C_t}}(\rho)(E)=\begin{cases}\rho(E)&\text{if $C\cap E=\emptyset$ and $C\cap \gamma_E=\emptyset$},\\
e^{\pm t\sin\alpha  ~Q}\rho(E) &\text{if $C\cap E\ne\emptyset$ and $C\cap \gamma_E=\emptyset$,}\\
e^{\pm t\sin\alpha  ~Q}\rho(E)e^{\mp t\sin\alpha  ~Q}&\text{if $C\cap E=\emptyset$ but $C\cap\gamma_E\ne \emptyset$,}
\end{cases}
\end{equation}
where   $\lambda_C(E)$ is sent to $e^{\alpha Q}$ by $\rho$.  Thus the 1-cocycle $z_{C}$ corresponding to the path $\Phi_{\pi_{C_t}}(\rho)$ is given by

\begin{equation}
\label{cocycle}
z_C(E)=\begin{cases}0&\text{if $C\cap E=\emptyset$ and $C\cap \gamma_E=\emptyset$},\\
 \pm  \sin\alpha  ~Q  &\text{if $C\cap E\ne\emptyset$ and $C\cap \gamma_E=\emptyset$,}\\
\pm \sin\alpha (Q-\rho(E)Q\rho(E)^{-1})&\text{if $C\cap E=\emptyset$ but $C\cap\gamma_E\ne \emptyset$.}
\end{cases}
\end{equation}

\bigskip
 
  Next, we characterize traceless representations of $\pi_1(F)$ which are abelian on $\pi_1(S_0)$.   
\begin{lem} \label{ir-re}Suppose $\rho:\pi_1(F)\to SU(2)$  satisfies $\Real(\rho(A_i))=0$ for $i=1,\dots ,n$ and  restricts to an  abelian representation on $S_0$.  Let $P=\rho(A_1)$. Fix arbitrarily a purely imaginary unit quaternion $Q$ so that $\Real(PQ)=0$.  Then there exist signs $\ep_i,\delta_i\in \{\pm 1\}, ~ i=1,\dots,n$, with $\ep_1=1$, and $e^{\alpha_i\bbi}\in S^1,  ~ i=1,\dots,n$ so that 
 $$\rho(A_i)=\ep_i P, ~\rho(D_i)=\begin{cases} e^{\alpha_iP}&\text{ if } \epsilon_i=\delta_i,\\
Qe^{\alpha_iP} &\text{ if } \epsilon_i\ne\delta_i.
\end{cases}$$
If $\rho$ is irreducible, at least one index satisfies $\ep_i\ne \delta_i$.
\end{lem}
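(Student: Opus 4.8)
The plan is to extract everything from the hypothesis that $\rho$ restricted to $\pi_1(S_0)=\langle A_i,B_i\mid \prod A_iB_i^{-1}=1\rangle$ is abelian, which forces all of the $\rho(A_i)$ and all of the $\rho(B_i)=\rho(D_iA_iD_i^{-1})$ to commute with one another. First I would observe that since $\Real(\rho(A_i))=0$, each $\rho(A_i)$ is a purely imaginary unit quaternion; in particular so is $P=\rho(A_1)$. Because $\rho(A_i)$ commutes with $P$, and two purely imaginary unit quaternions commute precisely when they are equal up to sign, we get $\rho(A_i)=\ep_i P$ with $\ep_i\in\{\pm1\}$ and $\ep_1=1$. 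Likewise $\rho(B_i)=\rho(D_i)\rho(A_i)\rho(D_i)^{-1}$ is a conjugate of $\ep_iP$, hence again a purely imaginary unit quaternion, and it commutes with $P$, so $\rho(B_i)=\delta_i P$ for some $\delta_i\in\{\pm1\}$. Rearranging $\rho(D_i)\,\ep_iP\,\rho(D_i)^{-1}=\delta_iP$ yields
$$\rho(D_i)\,P\,\rho(D_i)^{-1}=\ep_i\delta_i\,P .$$

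Next I would classify the $g\in SU(2)$ with $gPg^{-1}=\pm P$. In the $+$ case $g$ commutes with $P$, so $g=e^{\alpha P}$ for some $\alpha\in\RR$. In the $-$ case, writing $g=\cos\theta+\sin\theta\,\hat g$ with $\hat g$ a unit purely imaginary quaternion and viewing conjugation by $g$ as the rotation of $su(2)$ through angle $2\theta$ about $\hat g$, the requirement that $P$ be sent to $-P$ forces $\theta=\tfrac\pi2$ and $\hat g\perp P$; thus $g$ is a purely imaginary unit quaternion orthogonal to $P$, and a short computation using $QP=-PQ$ (valid since $\Real(PQ)=0$, i.e.\ $P\perp Q$) shows that the set of such $g$ is exactly $\{Qe^{\alpha P}\mid \alpha\in\RR\}$. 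Applying this with $g=\rho(D_i)$, and noting that $\ep_i\delta_i=1\iff\ep_i=\delta_i$, gives $\rho(D_i)=e^{\alpha_i P}$ when $\ep_i=\delta_i$ and $\rho(D_i)=Qe^{\alpha_i P}$ when $\ep_i\neq\delta_i$, as claimed.

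Finally, for the last assertion: if $\ep_i=\delta_i$ for every $i$, then every $\rho(D_i)=e^{\alpha_iP}$ and every $\rho(A_i)=\pm P$, so the image of $\rho$ lies in the abelian circle subgroup $\{e^{\alpha P}\}$, contradicting the irreducibility of $\rho$ on $\pi_1(F)$; hence at least one index must satisfy $\ep_i\neq\delta_i$. None of these steps is deep; the only point needing a little care is the classification of the elements conjugating $P$ to $-P$ and the verification that $Qe^{\alpha P}$ parametrizes precisely that set, together with keeping straight the two signs $\ep_i$ and $\delta_i$.
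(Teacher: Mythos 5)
Your proposal is correct and follows essentially the same route as the paper: identify $\rho(A_i)=\ep_i P$ and $\rho(B_i)=\delta_i P$ from the abelian restriction, classify the unit quaternions conjugating $P$ to $\pm P$ as the circles $\{e^{\alpha P}\}$ and $\{Qe^{\alpha P}\}$, and note that $\ep_i=\delta_i$ for all $i$ would force the image of $\rho$ into a single circle, contradicting irreducibility. The only difference is that you spell out the classification of elements with $gPg^{-1}=\pm P$, which the paper simply asserts.
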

 \begin{proof}
  Since $\rho$ is abelian on $S_0$, there exist signs $\ep_i  \in \{\pm 1\}$ with $\ep_1=1$, so that 
 $\rho(A_i)=\ep_i P$.  Since $B_i=D_iA_iD_i^{-1}$, it follows that $\Real(\rho(B_i))=0$, and since 
 $\rho$ is abelian on $S_0$, there exist signs $\delta_i  \in \{\pm 1\}$ so that 
 $\rho(B_i)=\delta_i P.$ 
 
The set of  unit quaternions $R$ satisfying $RPR^{-1}=P$ is precisely the circle $\{e^{\alpha P}\}$. Furthermore, the set of unit quaternions $R$ satisfying $RPR^{-1}=-P$ is precisely the circle $\{Qe^{\alpha P}\}$ of purely imaginary unit quaternions orthogonal to $P$.
 Since $B_i=D_iA_iD_i^{-1}$, there exist $\alpha_i\in \RR$ so that
$$\rho(D_i)=\begin{cases} e^{\alpha_iP}&\text{ if } \epsilon_i=\delta_i,\\
Qe^{\alpha_iP} &\text{ if } \epsilon_i\ne\delta_i.
\end{cases}$$
If  $\epsilon_i=\delta_i$ for all $i$, then $\rho$ is not irreducible.
\end{proof}

\noindent{\em Proof of Proposition \ref{abund1}.}
 Assume that $\rho$ is an irreducible  representation of the form described in Lemma \ref{ir-re}.  For convenience, since   some $\ep_i\ne\delta_i$, reindex  so that $\ep_1=1$ and $\delta_1=-1$ and  conjugate $\rho$ if necessary so that $P=\bbi$.  Take $Q=\bbj$.   Thus
$$\rho(A_i)=\ep_i\bbi, ~\rho(B_i)=\delta_i\bbi,~ \rho(D_i)= \begin{cases} e^{\alpha_i\bbi}&\text{ if } \epsilon_i=\delta_i,\\
\bbj e^{\alpha_i\bbi} &\text{ if } \epsilon_i\ne\delta_i.
\end{cases}
$$

Recall from Section \ref{tangentspaces} that the tangent space of $\mathcal{M}(F)$ at $\rho$ is identified with the $(6n-6)$-dimensional cohomology $H^1(F;su(2)_{\ad \rho})$. The space $Z^1(F;su(2)_{\ad \rho})$ of 1-cocycles is a $(6n-3)$-dimensional subspace of  $$C^1(F;su(2)_{\ad \rho})=\operatorname{Funct} (\{A_i,D_i\}_{i=1}^n,su(2))\cong su(2)^{2n}.$$ The space of 1-coboundaries is 3-dimensional since $\rho$ is irreducible. 

We first find perturbation isotopies which produce a collection of cocycles 
projecting to a $(6n-6)$-dimensional subspace of 
 $\operatorname{Funct} (\{A_i,D_i\}_{i=2}^n,su(2))$ (note the indexing). We then find three more perturbation isotopies producing cocycles spanning a 3-dimensional subspace of $$\{z\in C^1(F;su(2)_{\ad \rho}))~|~ z(A_i)=0=z(D_i) \text{ for } i>1\}.$$
 Their union therefore span $Z^1(F;su(2)_{\ad \rho})$.  It follows that (the differentials of) these perturbation isotopies, acting on representations modulo conjugation,  span all of $H^1(F;su(2)_{\ad \rho})$.
 
 \medskip

 Take ${\bf C'}$   to be some ordering of the $9n-6$ perturbation curves:
 $$\{C_{\rm I}(j)\}_{j=1}^n \cup \{C_{\rm II}(j)\}_{j=1}^n\cup \{C_{\rm III}(j)\}_{j=1}^n\cup \{C_{\RN 8}(1j)\}_{j=2}^n\cup \{C_{\RN 8}(j1)\}_{j=2}^n
\hskip.5in $$
 $$\hskip.5in \cup \{C_{\RN 9}(1j)\}_{j=2}^n\cup \{C_{\RN {10}}(1j)\}_{j=2}^n\cup \{C_{\RN {11}}(1j)\}_{j=2}^n\cup\{C_{\RN {11}}(j1)\}_{j=2}^n
 $$
from among those  listed in Table \ref{table1} and illustrated in  Figures \ref{drawing3afig}, \ref{drawing3a1fig}, \ref{drawing3a2fig},   \ref{drawing3a3fig} and  \ref{drawing3a4fig}.
  For each $C\in   {\bf C'} $, we compute the value of the cocycle  $z_{C}$ on  $E\in \{A_i,D_i\}_{i=2}^n$.  
Recall that  $\lambda_{C}(E)$ denotes the longitude of $C$ with respect to $E$.

\medskip

Consider first $C_{\RN 1}(j)\in {\bf C'}$. We compute $z_{C_{\RN 1}(j)}(A_j)$.  Table \ref{table1} gives $\lambda_{C_{\RN 1}(j)}(A_j)=A_jD_jA_j^{-1}$,   from which one computes 
$$\rho(\lambda_{C_{\RN 1}(j)}(A_j))=  \begin{cases} e^{\alpha_j\bbi}&\text{ if } \epsilon_j=\delta_j,
\\
-\bbj e^{\alpha_j\bbi} = e^{-\frac\pi 2 \bbj e^{\alpha_j\bbi}}&\text{ if } \epsilon_j\ne\delta_j.
\end{cases}
$$
Since $A_j\cdot C_{\RN 1}(j)=-1$, Equation (\ref{cocycle}) yields
$$z_{C_{\RN 1}(j)}(A_j)=  \begin{cases} -\sin \alpha_j\bbi&\text{ if } \epsilon_j=\delta_j,
\\
  \bbj e^{\alpha_j\bbi}&\text{ if } \epsilon_j\ne\delta_j.
\end{cases}
$$
 Figure \ref{drawing3afig}  shows that the perturbation curve $C_{\RN 1}(j)$ misses $A_i, \gamma_{A_i}, D_{i},$ and $\gamma_{D_i}$ when $i\ne j$. It also misses $D_j$ and $\gamma_{D_j}$, and hence  $z_{D_j}(A_i)=0, i\ne j$, and $z_{C_{\RN 1}(j)}(D_i)=0$ for all $i$.

Next, consider $C_{\RN 2}(j)\in {\bf C'}$.  Since, from Table 1, $\rho(\lambda_{C_{\RN 2}(j)}(D_j))=\rho(D_jA_jD_j^{-1})=\delta_j\bbi$, hence $z_{C_{\RN 2}(j)}(D_j)=\pm\bbi.$  Henceforth we ignore signs, since it turns out they do not matter.  As in the previous case,  $z_{C_{\RN 2}(j)}$ vanishes on $D_i, i\ne j$ and on  $A_i $ for all $i$.  

As a third example, consider $C_{\RN 3}(j)\in {\bf C'}$. This time there are two intersecting curves, $D_j$ and $A_j$. For $D_j$, $ \lambda_{C_{\RN 3}(j)}(D_j)=D_jA_j$. When $\ep_j=\delta_j$, this is sent to $e^{\alpha_j\bbi}\ep_j\bbi= e^{(\alpha_j+\ep_j \frac\pi 2)\bbi}$ so that $z_{C_{\RN 3}(j)}(D_j)=\pm \sin (\alpha_j+\ep_j \frac\pi 2)\bbi$ which equals  $\cos(\alpha_j)\bbi$ up to sign.    When $\ep_j\ne \delta_j$, $ \rho(\lambda_{C_{\RN 3}(j)}(D_j))=\pm \bbk e^{\alpha_j\bbi}$, so that in this case $z_{C_{\RN 3}(j)}(D_j)=\pm \bbk  e^{\alpha_j\bbi}$.
Since $\rho(\lambda_{C_{\RN 3}(j)}(A_j))=\rho(A_jD_j)$, we obtain $z_{C_{\RN 3}(j)}(A_j)=\pm\cos(\alpha_j)\bbi$ or $\pm \bbk e^{\alpha_j\bbi}$ according to whether $\ep_j=\delta_j$ or $\ep_j\ne\delta_j$.   Again, $z_{C_{\RN 3}(j)}(E)=0$ for  $E\ne D_j, A_j$.

The calculation of $z_C$    for   perturbation curves $C$ indexed by both $i$ and $j$ is more involved. Consider, for example,  $C_{\RN {10}}(1j)$. First note that $z_{C_{\RN {10}}(1j)}(D_i)=0$ for all $i$  since $C_{\RN {10}}(1j)$ misses $D_i$ and $\gamma_{D_i}$. Similarly $z_{C_{\RN {10}}(1j)}(A_i)=0$ for all $i\ne 1,j$.  
Table \ref{table1} gives 
$\lambda_{z_{C_{\RN {10}}(1j)}}(A_j)$. The expression is complicated, but, upon applying $\rho$, it simplifies greatly because $\rho(A_iB_i^{-1})=\pm 1$. Hence (recalling that $\ep_1=-\delta_1$)
$$\rho(\lambda_{z_{C_{\RN {10}}(1j)}}(A_j))=\pm \rho(D_1D_j)=
\pm \begin{cases}\bbj e^{(\alpha_1+\alpha_j)\bbi}&\text{ if } \epsilon_j=\delta_j,
\\
e^{(\alpha_j-\alpha_1)\bbi}&\text{ if } \epsilon_j\ne\delta_j.
\end{cases}
$$
Therefore 
$$z_{C_{\RN {10}}(1j)}(A_j)=
\pm \begin{cases}\bbj e^{(\alpha_j+\alpha_1)\bbi}&\text{ if } \epsilon_j=\delta_j,
\\
\sin(\alpha_j-\alpha_1)\bbi&\text{ if } \epsilon_j\ne\delta_j.
\end{cases}
$$

The rest of the calculations, left to the reader, are carried out in the same manner.  The results are tabulated (with signs ignored) in Table \ref{table2}. The reader should keep in mind that $\epsilon_1=-\delta_1$.  Only the calculations we require are given;  other entries are left blank.

\begin{table}
\begin{center}
  \begin{tabular}{|c|c|c|c|}
  \hline
 Pert. curve $C$ & Int. curve $E$&$z_C(E)$ when $\ep_j=\delta_j$&$z_C(E)$ when $\ep_j\ne\delta_j$\\
   \hline \hline
   $C_{\RN 1}(j)$& $A_j$&$\sin\alpha_j\bbi $& $ \bbj e^{\alpha_j\bbi}$\\ \hline
    $C_{\RN 2}(j)$& $D_j $&$ \bbi $& $\bbi $\\ \hline
 $ C_{\RN 3}(j)$ & $D_j$ &  $ \cos\alpha_j\bbi $ &   \\ \cline{2-4}
   & $A_j$ & $ \cos\alpha_j\bbi $ & \\ 
  \hline
$ C_{\RN{8}}(1j)$, $j>1$ & $A_j$ & & $\bbk e^{\alpha_j\bbi}$\\ \hline
$ C_{\RN{10}}(1j)$, $j>1$ 
& $A_j$ &$\bbj e^{(\alpha_j+\alpha_1)\bbi}$ & $\sin(\alpha_j-\alpha_1)\bbi$\\ \hline
$ C_{\RN{9}}(1j)$, $j>1$  
& $A_j$ &$\bbk e^{(\alpha_j+\alpha_1)\bbi} $ & $\cos(\alpha_j-\alpha_1)\bbi$\\ \hline

$C_{\RN{8}}(j1)$, $j>1$&$D_j$&$\bbk e^{\alpha_1\bbi}$ & $\bbk e^{\alpha_1\bbi}$\\ \hline

$C_{\RN {11}}(1j)$, $j>1$&$D_j$&$ \bbj e^{\alpha_1\bbi}$& $ \bbj e^{\alpha_1\bbi}$\\
\hline

\end{tabular}
\medskip
 \caption {Values  of selected cocycles $z_C$ on selected intersecting curves, used in the proof of Proposition \ref{abund1}.}  \label{table2} 
\end{center}
\end{table}
 
 To complete the proof of Proposition \ref{abund1}, we next make several observations regarding linear independence of certain of these cocycles.  Fix $j>1$.    Consider the three cocycles
$$d_1(j)=z_{C_{\RN 2}(j)}, ~d_2(j)=z_{C_{\RN {11}}(1j)}, ~d_3(j)=z_{C_{\RN 8}(j1)}.$$ 
These take $D_j$ respectively to:
$$\bbi, ~\bbj e^{\alpha_1\bbi}, ~\bbk e^{\alpha_1\bbi},$$
a basis of $su(2)$. The three perturbation curves 
$C_{\RN 2}(j), ~C_{\RN {11}}(1j),$ and $C_{\RN 8}(j1)$ miss $\gamma_{A_i}A_i \gamma_{A_i}^{-1}$ for $i>1$ and miss $\gamma_{D_i}D_i\gamma_{D_i}^{-1}$ for $i\ne 1,j$ and therefore $d_1(j), d_2(j)$ and $d_3(j)$ vanish on $\{A_i,D_i\}_{i=2}^n\setminus \{D_j\} $.

Suppose that   $\ep_j=\delta_j$. Then define  
$$a_1(j)= \begin{cases} \tfrac{1}{\sin\alpha_j}z_{C_{\RN 1}(j)}&\text{ if } \sin\alpha_j\ne 0,\\ z_{C_{\RN  3}(j)}\pm z_{C_{\RN 2}(j)}&\text{ if } \sin\alpha_j= 0,\end{cases}$$
with the sign chosen, in the second case, so that $a_1(j)(D_j)=0$. 
Define $$ a_2(j)= z_{C_{\RN {10}}(1j)}, ~a_3(j)=z_{C_{\RN 9}(1j)}.$$ The 3-cocycles   map $A_j$ respectively to (up to signs)
$$\bbi, ~ \bbj e^{(\alpha_j+\alpha_1)\bbi}, ~\bbk e^{(\alpha_j+\alpha_1)\bbi}.$$
The perturbation curves $C_{\RN 1}(j), C_{\RN 2}(j), C_{\RN 3}(j),C_{\RN {10}}(1j),$ and $  C_{\RN 9}(1j)$ miss $\gamma_{A_i}A_i \gamma_{A_i}^{-1}$ for $i\ne 1, j$ and  $C_{\RN 1}(j), C_{\RN {10}}(1j),$ and $  C_{\RN 9}(1j)$  miss $\gamma_{D_i}D_i\gamma_{D_i}^{-1}$ for $i>1$ and therefore the three cocycles  $a_1(j), a_2(j), a_3(j)$ vanish on $\{A_i,D_i\}_{i=2}^n\setminus \{A_j\} $.

Suppose instead that  $\ep_j=-\delta_j$. 
Define 
$$a_1(j)= \begin{cases} z_{C_{\RN {10}}(1j)}&\text{ if } \sin(\alpha_j-\alpha_1)\ne 0,\\ z_{C_{\RN  9}(1j)} &\text{ if } \sin(\alpha_j-\alpha_1)= 0.\end{cases}$$
Then define 
$$ a_2(j)= z_{C_{\RN {1}}(j)}, ~a_3(j)=z_{C_{\RN 8}(1j)}.$$ 
Then a similar calculation to that used  in the first case shows that 
$a_1(j), a_2(j), a_2(j)$ map $A_j$ to a basis of $su(2)$ and vanish on 
$\{A_i,D_i\}_{i=2}^n\setminus \{A_j\} $.

 Hence the $(6n-6)$ cocycles $\{a_1(j), a_2(j), a_3(j), d_1(j), d_2(j), d_3(j)\}_{j=2}^{n}$  are linearly independent in 
$$\operatorname{Funct} (\{A_i,D_i\}_{i=2} ^n,su(2)) .$$

  We now add three more cocycles to our to our collection: $$z_1=z_{C_{\RN 1}(1)}, z_2=z_{C_{\RN 2}(1)},\text{ and}z_3=z_{C_{\RN 3}(1)}.$$ Since $\ep_1=1$ and $\delta_1=-1$, we see from Table \ref{table2} that these three cocycles map $\{A_1,D_1\}$  onto a 3-dimensional subspace of $$\{z\in \operatorname{Funct} (\{A_i,D_i\}_{i=1} ^n,su(2)) \mid z(A_j)=z(D_j)=0 \text{ for } j>1\}.$$
  
 Hence, the set of $6n-3$ cocycles
 $$\{z_1,z_2,z_3, a_1(j),a_2(j), a_3(j), d_1(j), d_2(j), d_3(j), ~2\leq j\le n\}$$
span a subspace of 
$$Z^1(F;su(2)_{\ad{\rho}})\subset C^1(F;su(2)_{\ad{\rho}})=\operatorname{Funct} (\{A_i,D_i\}_{i=1} ^n,su(2))$$
of dimension $6n-3$.  Each cocycle in this set is a linear combination of the $9n-6$ cocycles in the set $\{z_C~|~ c\in {\bf C'}\}$ and hence the span of $\{z_C~|~ c\in {\bf C'}\}$  is at least $(6n-3)$-dimensional.   As explained at the start of the proof, this shows that 
the cohomology classes of these $9n-6$ cocycles  span $H^1(F;su(2)_{\ad{\rho}})=T_\rho\mathcal{M}(F)$. 

That $\Phi_{{\bf C'} }(\rho,-):\RR^{9n-6}\to \mathcal{M}(F)$ is submersive near zero now follows from the fact that, by Equation (\ref{cocycle}), 
$$
\left. \frac{\partial}{\partial t_i} \Phi_{{\bf C'} }(\rho,(t_1, \dots, t_n)\right|_{\bf{t}=\bf {0}} =z_{C_i}.$$ 

 As in the proof of Proposition  \ref{abund2}, expanding ${\bf C'}$ to  ${\bf C}$ cannot decrease the rank of the differential, so that 
$$\Phi_{\bf C}(\rho,-):\RR^{K}\to \mathcal{M}(F)$$ is submersive near zero.    
 \qed

\section{Transversality near the singular points}\label{local}

Now let $Y$ be a 3-dimensional $\ZZ$-homology ball,  
equipped  with an identification of its boundary with $S^2$.  Suppose that  $L\subset Y$ a properly embedded $1$-manifold with $n$  components, and denote the boundary points of the $i$th arc by    $a_i, b_i$. We identify $\partial Y\setminus nbd(\partial L)$  with $S_0$.

Let $X$ denote the 3-manifold obtained by removing a tubular neighborhood of $L$ from $Y$
$$X=Y\setminus nbd(L).$$
Thus $\partial X$ is identified with $F$.

The proof of our main result, Theorem \ref{thm1}, involves a stratum-by-stratum transversality argument, using the abundance of perturbations established in Section \ref{sec2}, combined with some symplectic  arguments.  In this section we establish   the first of these transversality arguments, identifying the structure  of the  traceless character variety of $(Y,L)$ near the finitely many singular points, namely the abelian representations, 
for all small perturbations. 

\medskip

 We first recall a transversality result concerning the perturbed flat moduli space for a 3-manifold $X$ with genus $n$ boundary $F$.  Let $\mathcal{X}$ denote the vector space of perturbation functions defined in Equation (\ref{pertfun}).

\begin{thm}{\cite[Theorem 15]{herald1}}\label{herald} There exists a disjoint union of embeddings
$N_i:D^2\times S^1\to X,$ $i=1,\dots,  p,$ and a neighborhood $\mathcal{U}_1$ of $0\in \mathcal{X}^p$, such that for $\pi $ in   a residual  subset of $\mathcal{U}_1$,   if $n\geq 3$, then
 \begin{equation}
\label{strat1}
 \mathcal{M}_\pi(X)= 
  \mathcal{M}_\pi(X)^{\ZZ/2,\ZZ/2}\sqcup \mathcal{M}_\pi(X)^{U(1),U(1)} \sqcup\mathcal{M}_\pi(X)^{SU(2),SU(2)} ,
\end{equation}
and, if $n=2$, 
 \begin{equation}
\label{strat2}
 \mathcal{M}_\pi(X)=  \mathcal{M}_\pi(X)^{\ZZ/2,\ZZ/2}\sqcup\mathcal{M}_\pi(X)^{\ZZ/2,U(1)} \sqcup\mathcal{M}_\pi(X)^{U(1),U(1)}\sqcup\mathcal{M}_\pi(X)^{SU(2),SU(2)}. 
\end{equation}
Moreover, for such perturbations $\pi$, $\mathcal{M}_\pi(X)^{\ZZ/2,\ZZ/2}$ is a smooth manifold of dimension $3n-3$, 
$\mathcal{M}_\pi(X)^{U(1),U(1)} $ is a smooth manifold of dimension $n$,  $\mathcal{M}_\pi(X)^{SU(2),SU(2)}$ is a finite set, and, when $n=2$, 
$\mathcal{M}_\pi(X)^{\ZZ/2,U(1)}$ is a finite set.    $\mathcal{M}_\pi(X)$ is compact, as is $\mathcal{M}_\pi(X)^{\ZZ/2,U(1)} \sqcup\mathcal{M}_\pi(X)^{U(1),U(1)}\sqcup\mathcal{M}_\pi(X)^{SU(2),SU(2)}$,  and  $\mathcal{M}_\pi(X)^{U(1),U(1)}$ has a cone bundle neighborhood in   $\mathcal{M}_\pi(X)^{\ZZ/2,\ZZ/2} \cup  \mathcal{M}_\pi(X)^{U(1),U(1)} $.  

  The restriction map  
\begin{equation}\label{incluson}
j:\mathcal{M}_\pi(X)\to \mathcal{M}(F)
\end{equation}
Lagrangian  immerses $\mathcal{M}_\pi(X)^{\ZZ/2,\ZZ/2}$ into the   $(6n-6)$-dimensional symplectic manifold $\mathcal{M}(F)^{\ZZ/2}$. 

\qed
 
\end{thm}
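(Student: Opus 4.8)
The plan is to prove Theorem~\ref{herald} by a gauge-theoretic transversality argument of the kind pioneered by Taubes and Floer, carried out stratum by stratum. First I would recall from Section~\ref{tangentspaces} that for $\rho\in\mathcal{M}_\pi(X)$ the Zariski tangent and obstruction spaces are $H^1_\pi(X;su(2)_{\ad{\rho}})$ and $H^2_\pi(X;su(2)_{\ad{\rho}})$, fitting into the Fredholm complex of \cite{herald1} obtained by deforming the twisted de~Rham complex of the pair $(X,F)$, and that there is a $\Stab(\rho)$-equivariant Kuranishi map~(\ref{kurpi}) with $\mathcal{M}_\pi(X)\cong K^{-1}(0)/\Stab(\rho)$ locally. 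The one topological input that holds for \emph{every} perturbation is the ``half lives, half dies'' phenomenon for $(X,F)$: Poincar\'e--Lefschetz duality forces the image of the restriction $H^1_\pi(X;su(2)_{\ad{\rho}})\to H^1(F;su(2)_{\ad{\rho}})$ to be isotropic for the cup-product symplectic form, and Lagrangian once the boundary map $H^2_\pi(X)\to H^2(F)$ vanishes. Consequently the Lagrangian-immersion assertion about $j$ is automatic once the top stratum is made smooth of the expected dimension; the real content is achieving that transversality on all strata at once and compatibly with the boundary restriction.

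Next I would dispose of the reducible and central loci. For a $\ZZ$-homology ball one has $H_1(X;\ZZ)\cong\ZZ^n$, so $\mathcal{M}_\pi(X)^{SU(2),SU(2)}=\Hom(H_1(X),\{\pm1\})$ is a $2^n$-point set and the abelian representations already form a compact $n$-dimensional family; the conditions ``stabilizer on $X$ (resp.\ on $F$) is at least $U(1)$'' are closed, so by compactness of $\mathcal{M}_\pi(X)=R_\pi^{-1}({\bf 1})/\text{\rm conj}$ (immediate from~(\ref{compact})) these loci are compact, and by upper semicontinuity of $\dim H^i_\pi$ no new components appear for small $\pi$. An equivariant Kuranishi analysis on the weight-two summand $H^1_\pi(X;\CC_{\ad{\rho}})$ of the splitting~(\ref{cohosplit}), combined with a small generic perturbation supported away from $L$, then gives that $\mathcal{M}_\pi(X)^{U(1),U(1)}$ is a smooth $n$-manifold, $\mathcal{M}_\pi(X)^{SU(2),SU(2)}$ is finite, and (for $n=2$) $\mathcal{M}_\pi(X)^{\ZZ/2,U(1)}$ is finite.

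For the open top stratum $\mathcal{M}_\pi(X)^{\ZZ/2,\ZZ/2}$, whose expected dimension is $\tfrac12\dim\mathcal{M}(F)^{\ZZ/2}=3n-3$ by index theory together with the Lagrangian property, I would set up the universal perturbed moduli space over the Banach space $\mathcal{X}^p$ of perturbation functions attached to a fixed, sufficiently rich family of embedded solid tori $N_i\subset\Int(X)$, and show the defining section is transverse to zero in this universal family. The crucial lemma --- the analogue, interior to $X$, of Propositions~\ref{abund1}--\ref{abund3} --- is an \emph{abundance} statement: the infinitesimal effects of holonomy perturbations along a carefully chosen finite set of curves span $H^1_\pi(X;su(2)_{\ad{\rho}})$, and the relevant equivariant summands on the reducible strata. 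Granting abundance, Sard--Smale yields a residual set of $\pi\in\mathcal{U}_1$ making $\mathcal{M}_\pi(X)^{\ZZ/2,\ZZ/2}$ a smooth $(3n-3)$-manifold whose tangent space maps isomorphically onto a Lagrangian subspace of $H^1(F;su(2)_{\ad{\rho}})$, so $j$ restricts there to a Lagrangian immersion into $\mathcal{M}(F)^{\ZZ/2}$. The cone-bundle neighborhood of $\mathcal{M}_\pi(X)^{U(1),U(1)}$ I would read off from the $U(1)$-equivariant local model: in~(\ref{cohosplit}) the $U(1)$ acts trivially on $H^1_\pi(X;\RR)$ (the $n$-dimensional tangent to the abelian stratum) and with weight two on $H^1_\pi(X;\CC_{\ad{\rho}})$, which after perturbation has complex dimension $n-1$ with vanishing obstruction, so $K^{-1}(0)$ is locally $(\text{open in }\RR^n)\times(\text{complex cone in }\CC^{\,n-1})$ and quotienting by the weight-two action turns the second factor into a cone on $\CC P^{\,n-2}$.

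The hard part will be the abundance statement: one must exhibit holonomy perturbations supported in small solid tori \emph{inside} $X$ whose derivatives span the deformation spaces of every stratum at once while respecting the stabilizer actions --- the three-dimensional counterpart of the explicit perturbation-curve bookkeeping done on the collar in Sections~\ref{snakesonaplane2}--\ref{abundancesec}, and where essentially all of the technical weight of \cite{herald1} lies. A secondary difficulty is controlling compactness and the stratification under perturbation so that the residual-set and cone-bundle conclusions persist on a full neighborhood $\mathcal{U}_1$ of $0$ rather than merely pointwise.
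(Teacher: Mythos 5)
There is nothing in this paper to compare your argument against: Theorem \ref{herald} is not proved here at all, but is quoted verbatim from \cite[Theorem 15]{herald1} (note the immediate \qed), and the authors use it as a black box. So the relevant question is whether your outline would actually constitute a proof of the cited result. Its skeleton is sound and matches the standard strategy (and, as far as the present paper indicates, the strategy of \cite{herald1}): Kuranishi models from the perturbed Fredholm complex, the Poincar\'e--Lefschetz ``half lives, half dies'' argument showing that once $H^2_\pi(X;su(2)_{\ad\rho})=0$ at irreducibles the restriction to $H^1(F;su(2)_{\ad\rho})$ is injective with Lagrangian image, the $U(1)$-equivariant model $\RR^n\oplus\CC^{n-1}$ at abelian points giving the cone-bundle neighborhood, and compactness from Equation (\ref{compact}).

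The gap is that the step carrying all the content is only named, not carried out. Your ``abundance'' lemma for holonomy perturbations supported in solid tori in the interior of $X$ --- that their infinitesimal effects span $H^1_\pi(X;su(2)_{\ad\rho})$ (and the relevant equivariant summands along the reducible strata) at every point of the compact moduli space --- is precisely the technical heart of \cite{herald1}, and you give no construction of the curves, no computation of the induced cocycles, and no argument that spanning can be achieved simultaneously on all strata; the collar-supported computations of Sections \ref{snakesonaplane2}--\ref{abundancesec} of this paper do not substitute for it, since they only control the boundary restriction, not $H^1_\pi(X)$ itself. Likewise the Sard--Smale setup over $\mathcal{X}^p$, the persistence of the stratification and of compactness over a full neighborhood $\mathcal{U}_1$ (rather than at $\pi=0$), and the dimension counts for the mixed strata are asserted rather than proved; in particular the finiteness of $\mathcal{M}_\pi(X)^{\ZZ/2,U(1)}$ for $n=2$ does not follow from a naive transversality count ($(3n-3)+2n-(6n-6)=1$ when $n=2$), so it needs the more careful equivariant normal-form analysis near boundary-abelian points that \cite{herald1} supplies. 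As written, your text is a correct plan for reproving the cited theorem, not a proof of it.
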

 
\medskip

  Proposition \ref{lin} says that  the real-algebraic variety $R(S^2,\punctures)$ consists of two strata: a finite set $R(S^2,\punctures)^{U(1)}$  and a smooth $(4n-6)$-manifold $R(S^2,\punctures)^{\ZZ/2}$.  It follows from general results about algebraic varieties   that each singular point has a neighborhood homeomorphic to a cone on a $(4n-7)$-dimensional manifold \cite{King}. Multiplying a representation by a central character $\pi_1(S^2\setminus\punctures)\to \{\pm 1\}$ defines an action  on $R(S^2,\punctures)$  which is transitive on the singular points.  Hence there exists a   manifold $M(n)^{4n-7}$ such that each singular point $\rho\in R(S^2,\punctures)^{U(1)}$
has a neighborhood homeomorphic to $\text{cone}(M(n))$.  For example, when $n=2$, $R(S^2,\{a_1,b_1,a_2,b_2\})$ is a pillowcase and $M(2)=S^1$.

\medskip

The following proposition  is a generalization of Proposition 8.1 in our previous article \cite{HHK2}, which treated the case $n=2$ by a different method, passing to a branched cover and $SO(3)$ and making use of the fact that the only traceless representations near an abelian one are binary dihedral. There are non-binary dihedral representations near the abelian representations when $n>2$, and hence a different proof is required.    In the last part of the proposition, we use the term {\em radial embedding} to denote a map between cones that is an embedding away from the cone point, and sends the cone point to the cone point.

\begin{prop}\label{near singular} Let $(Y,L)$ be an $n$-tangle in a $\ZZ$-homology ball $Y$.   Given any disjoint union of embeddings $N_i:D^2\times S^1\to X, i=1,\dots,  p$, there exists a  neighborhood $\mathcal{U}_2$ of $0$ in $\mathcal{X}^p$ so that given any $(f_1,\dots,f_p)$ in $\mathcal{U}_2$, determining perturbation data $\pi=\{N_i, f_i\}_{i=1}^p$,  
\begin{enumerate} 
\item  the abelian stratum $R_\pi(Y,L)^{U(1),U(1)}$ is a finite set of $2^{n-1}$ points, and  
\item each point in $\rho \in R_\pi(Y,L)^{U(1),U(1)}$ has neighborhood in $R_\pi(Y,L)$    homeomorphic to a cone on $\CC P^{n-2}$ on which the restriction map $R_\pi(Y,L)\to R(S^2,\punctures)$ is a   radial embedding  
 $$\text{\rm cone}(\CC P^{n-2} )\to \text{\rm cone}(M(n))$$
 near the cone point.  
 \end{enumerate}
  The second claim implies, in particular,  that a small  deleted  neighborhood of $\rho$  in $R_\pi(Y,L)\setminus\{\rho\}$ contains only $(\ZZ/2,\ZZ/2)$ points.  In other words,  every traceless $\pi$-perturbed representation of $\pi_1(Y\setminus L)$ near $\rho$ is irreducible and restricts irreducibly to $\pi_1(S_0)$.  
 \end{prop}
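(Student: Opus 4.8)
\textbf{Proof proposal for Proposition \ref{near singular}.}

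The plan is to reduce the analysis near an abelian representation to a local model built from the perturbed cohomology groups of $X$ and a Kuranishi-type argument, exactly as in the strategy that worked for $n=2$ in \cite{HHK2} but now without the branched-cover shortcut. First I would fix an abelian traceless representation $\rho_0$ of $\pi_1(Y\setminus L)$; since $Y$ is a $\ZZ$-homology ball, $H_1(Y\setminus L;\ZZ)\cong \ZZ^n$ generated by the meridians $m_1,\dots,m_n$, and the traceless condition forces each $\rho_0(m_i)$ to be conjugate into a fixed maximal torus with $\Real = 0$, i.e. $\rho_0(m_i)=\pm\bbi$ up to simultaneous conjugation, with one overall sign fixed by the relation coming from $\partial L$. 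This gives the count $2^{n-1}$ for the abelian stratum of the \emph{unperturbed} variety, and I would note that for $f_i$ small the perturbation condition (\ref{pert}) applied to an abelian representation forces $\rho(\mu_i)=F_i(\rho(\lambda_i))$ with both sides lying in the same maximal torus, so abelian perturbed representations are in bijection with the unperturbed ones; hence $R_\pi(Y,L)^{U(1),U(1)}$ has exactly $2^{n-1}$ points, proving (i). Actually the content of (i) for small $\pi$ is just upper semicontinuity of $\dim H^0_\pi$ together with this torus-valued rigidity.

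For (ii) I would work with the splitting (\ref{cohosplit}): at such a $\rho_0$, which takes values in the circle $S^1\subset SU(2)$, the perturbed complex splits as $H^i_\pi(X;\RR)\oplus H^i_\pi(X;\CC_{\ad\rho_0})$, with the stabilizer $U(1)$ acting trivially on the real part and with weight two on the complex part. The traceless conditions and the homology-ball hypothesis should pin down these groups: the real part contributes the tangent directions to the abelian stratum (which we have just seen is $0$-dimensional after imposing the trace conditions, so $H^1_\pi(X;\RR)$ restricted to the traceless locus vanishes), while the complex part $H^1_\pi(X;\CC_{\ad\rho_0})$ should have complex dimension $n-1$ after cutting down by the $n$ traceless meridian conditions — here I would use the half-lives-half-dies principle for the boundary surface $F$ together with a Mayer--Vietoris / Poincaré--Lefschetz duality computation on $(X,F)$, invoking that $Y$ is a homology ball to control $H^*(X;\CC_{\ad\rho_0})$ in terms of the meridians. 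The Kuranishi map (\ref{kurpi}) is $U(1)$-equivariant, so after the trace cut the local model of $R_\pi(Y,L)$ near $\rho_0$ is $K^{-1}(0)/U(1)$ where $K:\CC^{n-1}\to H^2_\pi$ is $U(1)$-equivariant of weight two; the obstruction space $H^2_\pi(X;\CC_{\ad\rho_0})$ should vanish (again by duality on the homology ball, the weight-two part of $H^2$ pairs with the weight-two part of $H^1$ of the double $DX$, which is controlled), so $K\equiv 0$ and the local model is $\CC^{n-1}/U(1) = \text{cone}(\CC P^{n-2})$, with the weight-two circle action. This is the cone structure in (ii).

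Finally, for the radial-embedding claim, I would observe that the restriction map $R_\pi(Y,L)\to R(S^2,\punctures)$ sends $\rho_0$ to an abelian (singular) point $\bar\rho_0\in R(S^2,\punctures)^{U(1)}$, whose local model by \cite{King} (or by the same split-Kuranishi argument applied to the surface $S_0$, using Proposition \ref{lin}) is $\text{cone}(M(n))$. Compatibility of the two Kuranishi pictures — the one on $X$ restricts to the one on $S_0$ — shows the restriction map is, in these models, the map induced on quotients by the restriction $H^1_\pi(X;\CC_{\ad\rho_0})\to H^1(S_0;\CC_{\ad\bar\rho_0})$; this is injective on the complex part because a perturbed cocycle on $X$ that is a coboundary on $S_0$ and satisfies the meridian conditions must already be a coboundary on $X$ (this is where one uses that $L$ is a tangle in a homology ball: there is no extra first homology to absorb the discrepancy). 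An injective $U(1)$-equivariant linear map of weight-two representations descends to a radial embedding of cones, giving (ii); and since away from the apex the cone $\text{cone}(\CC P^{n-2})\setminus\{\rho_0\}$ consists entirely of irreducible representations whose restrictions are also irreducible (the weight-two $U(1)$ action is free off the origin, so the stabilizer drops to $\ZZ/2$ and the trace conditions force the restriction to be non-abelian), we get the final sentence about deleted neighborhoods containing only $(\ZZ/2,\ZZ/2)$ points.

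\textbf{Main obstacle.} The crux is the cohomology computation: showing that after imposing the $n$ traceless meridian conditions the perturbed group $H^1_\pi(X;\CC_{\ad\rho_0})$ is exactly $(n-1)$-complex-dimensional and that the obstruction $H^2_\pi(X;\CC_{\ad\rho_0})$ (weight-two part) vanishes, so that the Kuranishi map is identically zero and the local model is genuinely a cone on $\CC P^{n-2}$ with no higher-order correction. This is delicate because $X$ is a general homology-ball complement, not aspherical, and the perturbations deform the de Rham complex; the argument will need the duality and half-lives-half-dies input for the boundary $F$ from \cite{herald1} together with the $\ZZ$-homology-ball hypothesis to rule out unwanted cohomology, and care is required to see that genericity of $\pi$ is \emph{not} needed here — the statement holds for \emph{all} small $\pi$, which forces the argument to be cohomological/semicontinuity-based rather than a transversality argument.
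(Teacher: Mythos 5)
Your overall strategy is the same as the paper's: count the abelian traceless representations explicitly, show they persist in bijection for all small $\pi$, split the coefficients as $\RR\oplus\CC_{\ad\rho_0}$, compute $H^1_\pi(X)\cong\RR^n\oplus\CC^{n-1}$ and $H^2_\pi(X)=0$, and then use the $U(1)$-equivariant Kuranishi/slice picture (trace conditions killing the $\RR^n$, weight-two action on $\CC^{n-1}$) together with injectivity of the restriction on $H^1$ to obtain the cone$(\CC P^{n-2})$ model and the radial embedding into cone$(M(n))$. The final freeness statement also comes out of your setup, though your stated reason (``the trace conditions force the restriction to be non-abelian'') is not the operative one: what is used is that an injective $U(1)$-equivariant map cannot increase stabilizers, which you have once the restriction is a local equivariant embedding.

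The genuine gap is exactly at the step you flag as the crux, and your proposed route does not close it. The vanishing $H^2(X;\CC_{\ad\rho_0})=0$ (equivalently $\dim_\CC H^1(X;\CC_\tau)=n-1$, where $\tau$ sends each meridian to $-1$) cannot be extracted from Poincar\'e--Lefschetz duality, half-lives-half-dies, and the homology-ball hypothesis alone: with $H^0=H^3=0$, the Euler characteristic gives $\dim H^1(X;\CC_\tau)-\dim H^2(X;\CC_\tau)=n-1$, and the long exact sequence of $(X,F)$ plus duality gives $\dim H^2(X;\CC_\tau)=\dim H^1(X;\CC_\tau)-\dim\operatorname{im}\bigl(H^1(X;\CC_\tau)\to H^1(F;\CC_\tau)\bigr)$, which is the \emph{same} relation once the image is known to be Lagrangian; these constraints are consistent with $\dim H^1=n-1+k$, $\dim H^2=k$ for every $k\ge 0$. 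The paper supplies the missing input by Shapiro's lemma, identifying $H^2(X;\CC_\tau)$ with $H^2(X_2;\CC)$ for the induced double cover, and then invoking the Casson--Gordon fact \cite{CG} that the double branched cover of a homology sphere along a knot is a $\ZZ/2$- (hence rational) homology sphere, so that $H^2(Y_2;\CC)=0$ traps $H^2(X_2;\CC)=0$ in the sequence of the pair $(Y_2,X_2)$; some such topological input about the branched cover is unavoidable. Your reference to pairing against ``the double $DX$'' does not substitute for it. A secondary, smaller gap: for claim (i) you appeal to semicontinuity of $H^0_\pi$ plus ``torus-valued rigidity,'' but semicontinuity does not give the count or even rule out new abelian traceless solutions for small $\pi$; the paper instead writes the abelian perturbation equations explicitly (Lemma \ref{U(1)reps}) and shows the relevant solution map is a trivial covering for small $(f_1,\dots,f_p)$ and $\alpha_i$ near $\tfrac{\pi}{2}$, which is the kind of implicit-function/covering argument your sketch would need to spell out. (Also, the count $2^{n-1}$ comes from normalizing the overall conjugation, e.g.\ $\rho(A_1)=\bbi$, not from a relation imposed by $\partial L$.)
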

\begin{proof} 
 Since $Y$ is a $\ZZ$-homology ball and $X=Y\setminus nbhd( L)$, $H_1(X)$ is isomorphic to $\ZZ^n$, generated by the meridians $A_1, \dots , A_n$.  Thus any $\rho\in R(Y,L)^{U(1),U(1)}$ may be uniquely conjugated so that $\rho(A_1)=\bbi$, and $\rho(A_i)=\epsilon_i \bbi$ for some  of signs $(\epsilon_2, \dots, \epsilon_n)$.   Hence there are $2^{n-1}$  points in  $\rho\in R(Y,L)^{U(1),U(1)}$.   The structure of $R(Y,L)$ near these points can be determined by computing the first and second cohomology, and using the Kuranishi model.  In order to show that the number of points and the local structure nearby persist for small perturbations, however, it is more convenient to first establish the correspondence between the points in $R(Y,L)^{U(1),U(1)}$ and $R_\pi(Y,L)^{U(1),U(1)}$, so we can show that the cohomology calculations work for small perturbation as well.

Fix  a collection of embeddings $N_i:D^2\times S^1\to X, ~i=1,\dots ,p$. Then each choice of a $p$-tuple $(f_1,\dots ,f_p)\in \mathcal{X}^p$ of perturbation functions determines the perturbation data $$\pi=\{(N_i, f_i)\}_{i=1}^p.$$

The group $H_1(X\setminus \sqcup_i N_i)$
is isomorphic to $\ZZ^{n+p}$, generated by the  $A_1, \dots , A_n$ and the $p$   
 meridians $\mu_i$ of the perturbation solid tori $N_i$.  
The longitude $\lambda_i$ of $N_i$ can be expressed in $H_1(X\setminus \sqcup_i N_i)$ as a linear combination
 $$\lambda_i=\sum_j a_{i,j} A_j + \sum_k b_{i,k} \mu_k$$  for some integers $a_{i,j}, b_{i,k}$.  
 
 We omit the proof of the following simple lemma, which describes the perturbation condition in the context of abelian representations.
\begin{lem}\label{U(1)reps}  Let perturbation data $\pi=\{(N_i, f_i)\}_{i=1}^p$ be given.
 Any  $\rho \in\mathcal{M}_\pi(X)^{U(1)}$ satisfying $\rho(A_i)\ne \pm 1, ~i=1,\dots,n$ may be conjugated so that 
 \begin{equation}\label{form1}\rho(A_i)=e^{\ep_i\alpha_i\bbi},~ i=1,\dots,n,~ \rho(\lambda_i)=e^{\theta_i\bbi},~ i=1,\dots,p,\end{equation}
where 
 \begin{equation}\label{form2}
 0<\alpha_i<\pi,  \ep_1=1, \ep_i\in \{\pm 1\},\text{~and~} e^{\theta_i \bbi}\text{~satisfy ~}  f_i\big( \sum_{j=1}^n a_{i,j}\ep_i \alpha_i + \sum_{k=1}^p b_{i,k} \theta_k \big) =\theta_i ~{\rm mod} ~2\pi.
\end{equation}
 Conversely,   any choice of $\alpha_i\in(0,\pi), \ep_i\in\{\pm 1\}, \ep_1=1,e^{\theta_i\bbi}\in U(1)$ satisfying {\rm (\ref{form2})} and   determines, via  {\rm (\ref{form1})}, a unique conjugacy class in $\mathcal{M}_\pi(X)^{U(1)}$ satisfying $\rho(A_i)\ne \pm 1$.
 Such representations are traceless, and hence $\rho\in R_\pi(Y,L)^{U(1)}$, exactly when $\alpha_i=\tfrac\pi 2,~ i=1,\dots, n$. \qed
\end{lem}

 Consider the   $C^k$  map
 $$C: \{\pm 1\}^{n-1}\times \mathcal{X}^p\times (0,\pi)^n\times U(1)^p\to U(1)^p$$
whose $i$th coordinate is given by
$$C_i(\ep, f,\alpha, \theta)= f_i\big(a_{i,1}\alpha_1  + \sum_{j=2}^n a_{i,j} \ep_j\alpha_j + \sum_{k=1}^p b_{i,k} \theta_k \big) -\theta_i ~\text{ mod }2\pi,$$  
where $\ep=(\ep_2,\dots,\ep_n),~ f=(f_1\dots, f_p),~\alpha=(\alpha_1,\dots,\alpha_n), ~$ and $\theta=(\theta_1,\dots, \theta_p)$.

Lemma \ref{U(1)reps} shows that  the fiber over $(1,\dots ,1)$ of the restriction    
\begin{equation}\label{les1}
C:  \{\pm 1\}^{n-1}\times \{(0, \dots,0)\}\times \{(\tfrac\pi 2,\dots,\tfrac\pi 2)\}\times U(1)^p\to U(1)^p
\end{equation}
  is identified with $R(Y,L)^{U(1),U(1)}$.  More generally, the fiber over $(1,\dots ,1)$ of the restriction  
\begin{equation}\label{les2}
C: \{\pm 1\}^{n-1}\times \{(f_1, \dots,f_p)\}\times \{(\tfrac\pi 2,\dots,\tfrac\pi 2)\}\times U(1)^p\to U(1)^p
\end{equation}  
is identified with $R_\pi(Y,L)^{U(1),U(1)}$, where $\pi$ is the perturbation associated to the $p$-tuple 
$(f_1,\dots,f_p)$. Finally, for a fixed $\alpha=(\alpha_1,\dots,\alpha_n)$, 
 the fiber over $(1,\dots ,1)$ of the restriction  
 \begin{equation}\label{les3}
 C: \{\pm 1\}\times \{(f_1, \dots,f_p)\}\times \{(\alpha_1,\dots,\alpha_n)\}^{n-1}\times U(1)^p\to U(1)^p 
\end{equation}  
 is identified with the subset of $\mathcal{M}_\pi(X)^{U(1)}$ consisting of those representations satisfying (\ref{form1}).

The restriction   (\ref{les1}) is obviously a trivial $(n-1)$-fold covering map.  Hence  so is the restriction  (\ref{les3}) whenever    
$(f_1,\dots,f_p)\in \mathcal{X}^p$ is sufficiently small  with respect to the $C^k$ metric on each factor and $(\alpha_1,\dots,\alpha_n)$  and each $\alpha_i$ is sufficiently close enough to $\frac\pi 2$.   Thus we can choose a neighborhood $\mathcal{U}_2$ of $0$ in $\mathcal{X}^p$ and a neighborhood $\mathcal{V}$ of $(\tfrac\pi 2,\dots,\tfrac\pi 2)$ in $(0,\pi)^n$ so that 
each choice $( \ep,f, \alpha)$ in $\{\pm 1\}^{n-1}\times\mathcal{U}_2\times \mathcal{V} $ determines a unique $\rho_{\ep,\pi,\alpha}\in\mathcal{M}_\pi(X)^{U(1)}$.  
When $\alpha=(\tfrac\pi 2,\dots,\tfrac\pi 2)$ we write  $$\rho_{\ep, \pi}\in R_\pi(Y,L)^{U(1),U(1)}.$$ In particular,  $R_\pi(Y,L)^{U(1),U(1)}$ contains $2^{n-1}$ points for each $(f_1,\dots,f_p)\in \mathcal{U}_2$, indexed by $\ep\in\{\pm 1\}^n$. Near each $\rho_{\ep,\alpha}$, $\mathcal{M}_\pi(X)^{U(1)}$ is homeomorphic to a neighborhood of $(\tfrac\pi 2,\dots,\tfrac\pi 2)$ in $(0,\pi)^n$, equivalently to an open set in $\RR^n$.

\medskip

We turn now to a description of  the local structure of $R_{\pi} (Y,L)$ and $\mathcal{M}_\pi(X)$ near $\rho_{\ep,\pi} $ for $\ep=(\ep_2,\dots, \ep_n)$ in $\{\pm 1\}^n$ and $\pi=\{N_i, f_i\}_{i=1}^p$ with $(f_1,\dots, f_p)$ in   $ \mathcal{U}_2$, using the Kuranishi model.  We begin by summarizing some results about the relevant cohomology groups.

 Since $\rho_{\ep, \pi}$ is reducible,   the conjugation action of $\rho_{\ep,\pi}  $ on $su(2)$ splits as 
  $su(2)_{ad~\rho_{\ep,\pi}}=\RR\oplus \CC_{\ep,\pi}$
and for any path connected subspace $Z$ of $X\setminus \sqcup_i N_i$,
\begin{equation}
\label{split11}H^i_\pi(Z;su(2)_{ad~\rho_{\ep,\pi}  })=H^i_\pi(Z;\RR)\oplus H^i_\pi(Z;\CC_{\ep,\pi} )
\end{equation}
(see Section \ref{tangentspaces} and  esp. Equation (\ref{cohosplit})).
We calculate $H^i_\pi(Z;su(2)_{ad~\rho_{\ep,\pi}  })$  for $Z=X, F,$ and $ S_0$ by first calculating the terms in the splitting (\ref{split11}) in the case when the perturbation is trivial, that is, for $\rho_{\ep,0}$.  We then use the upper semicontinuity property to show this remains stable for small perturbations $\pi$. The following lemma treats the unperturbed case (all $f_i=0$).

\begin{lem} \label{dimcomp}
\hfill \begin{enumerate} 
 \item $H^1(X;su(2)_{\ad{ \rho_{\ep,0}}})=H^1(X;\RR)\oplus H^1(X;\CC_{\ep,0})=\RR^n\oplus \CC^{n-1}$, 
 \item $H^2(X;su(2)_{\ad {\rho_{\ep,0}}})=H^2(X;\RR)\oplus H^2(X;\CC_{\ep,0})=0,$  
 \item $H^2(S_0;su(2)_{\ad{\rho_{\ep,0}}})= 0,$ $H^1(S_0;su(2)_{\ad{\rho_{\ep,0}}})=
 H^1(S_0;\RR)\oplus H^1(S_0;\CC_{\ep,\pi})=\RR^{2n-1}\oplus \CC^{2n-2}, $ and 
 \item the restriction map $H^1 (X; su(2) _{\ad{\rho_{\ep, 0}}} ) \to H^1 (S_0; su(2)_{\ad{\rho_{\ep, 0}}})$ is injective.  \end{enumerate} \end{lem}   
 \begin{proof} 
 
 Since $X$ is the complement of an $n$-tangle in  a $\ZZ$-homology 3-ball, $H^0(X;\RR)=\RR, ~H^1(X;\RR)=\RR^{n} , $ and $H^i(X;\RR)=0, $ for $i>1.$

The reducible representation $\rho_{\ep, 0}$ has image in the 4-element subgroup $\{\pm 1,\pm\bbi\}$ since $f_i=0$ implies that $\rho(\mu_i)=0$, and hence $ \rho_{\ep, 0}:H_1(X\setminus \sqcup_iN_i)\to U(1)\subset SU(2)$ factors through $H_1(X\setminus \sqcup_iN_i)\to H_1(X)$, spanned by the $A_i$, which are sent to $\pm \bbi$.

The $ad\rho_{\ep, 0}$ action on $\CC$ factors through a $\{\pm 1\}$ action. In fact,  $-1$ acts  trivially and $\pm \bbi$ as multiplication by $-1$ via the weight 2 representation on $\CC$. This gives an identification of cohomology groups
$$H^i(X;  \CC_{\ep, 0}  ) \cong H^i(X;  \CC_{\tau}  ) $$
where $\tau:\pi_1(X)\to \{\pm 1\}$ is the unique representation taking each $A_i$ to $-1$. 
   The group ring $\CC[\ZZ/2]$ splits as a $\ZZ/2=\{\pm 1\}$ module into the sum of $\CC_{\tau}$, spanned by $1-t$, and a trivial factor spanned by $1+t$, i.e., 
$\CC[\ZZ/2]=\CC_{\tau}\oplus\CC$.  Thus $$H^i(X;\CC[\ZZ/2])=H^i(X;\CC_{\tau})\oplus H^i(X;\CC),$$ and Shapiro's lemma \cite{brown} shows that $H^i(X;\CC[\ZZ/2])=H^i(X_2;\CC)$, where $X_2\to X$ is the induced 2-fold cover.

\medskip
  
Next, we claim that
$$H^0(X;\CC_{\tau})=0, ~H^1(X;\CC_{\tau})=\CC^{n-1}, ~H^i(X;\CC_{\tau})=0 ~i>1.$$
To see this, first recall  that $$H^0(X;\CC_{\tau})=\{z\in \CC~|~ (1-\tau(\gamma))z=0 \text{ for all }\gamma\in \pi_1(X)\},$$ which is  $0$ since $\tau(A_1)=-1$.
The 3-manifold with nonempty boundary $X$ has Euler characteristic $n-1$, and hence the assertion that $H^1(X;\CC_{\tau})=\CC^{n-1}$ follows once we show that $H^2(X;\CC_{\tau})=0.$ Since $H^2(X;\CC)=H^2(Z;\RR)\otimes\CC=0$, it follows that $H^2(X;\CC_{\tau})=H^2(X;\CC_{\tau})\oplus H^2(X;\CC)=H^2(X;\CC[\ZZ/2])=H^2(X_2;\CC)$.

Let $Y_2$ be the branched cover of $Y$, branched along $T$,  corresponding to the 2-fold cover $X_2\to X$. Consider the two groups adjacent to $H^2 (X_2; \CC)$  in the cohomology sequence for the pair $(Y_2,X_2)$: 
\begin{equation} 
\dots \to H^2(Y_2;\CC) \to H^2(X_2;\CC) \to H^3(Y_2,X_2;\CC)\to\dots \label{leq_pair} 
\end{equation}
 The excision isomorphism shows $H^3(Y_2,X_2;\CC)=H^3(\sqcup_{i=1}^n I\times D^2,I\times S^1;\CC)=0$.
The fact that $H^2(Y_2;\CC)=0$ follows from the fact that $Y_2$ is the 2-fold branched cover of a homology ball, and hence a $\ZZ/2$-homology ball.
In more detail, one can take a trivial $n$-tangle in a 3-ball, $(B^3,T)$, and attach it to $(Y,L)$ to obtain a knot $K=L\cup T$ in a homology sphere $\Sigma=Y\cup B^3$.  The 2-fold branched cover of $\Sigma$ branched over $K$ is a $\ZZ/2$-homology sphere (\cite{CG}), and, since the 2-fold branched over of $(B^3,T)$ is a handlebody, $Y_2$ is the complement of a handlebody in a $\ZZ/2$-homology sphere, hence $H^2(Y_2;\CC)=0$.  This shows that the exact sequence (\ref{leq_pair}) 
traps $H^2(X_2;\CC)$ between two vanishing groups. 
Thus $0=H^2(X_2;\CC)=H^2(X;\CC_{\tau}).$   By Euler characteristic considerations,  it follows that $H^1 (X; \CC_\tau)=\CC^{n-1}$, as asserted.  
Hence $$H^1(X;su(2)_{\ad{\rho_{\ep,0}}})=H^1(X;\RR)\oplus H^1(X;\CC_\tau)=\RR^n\oplus \CC^{n-1}$$ and
$$H^2(X;su(2)_{\ad{ \rho_{\ep,0}}})=H^2(X;\RR)\oplus H^2(X;\CC_\tau)=0\oplus 0=0.$$     This proves claims (i) and (ii).

Recall that $ F=\partial X$ is a surface of genus $n$ and $S_0\subset F$ is a $2n$-punctured 2-sphere.  
The Euler characteristic shows that 
$$H^0(F;\RR)=\RR=H^2(F;\RR), ~H^1(F;\RR)=\RR^{2n},
$$
$$H^0(F;\CC_\tau)=0=H^2(F;\CC_\tau), ~H^1(F;\CC_\tau)=\CC^{2n-2},
$$
and 
$$H^0(S_0;\RR)=\RR, ~H^1(S_0;\RR)=\RR^{2n-1},~H^2(S_0;\RR)=0,
$$
$$H^0(S_0;\CC_\tau)=0=H^2(S_0;\CC_\tau), ~H^1(S_0;\CC_\tau)=\CC^{2n-2}.
$$
 This proves claim (iii). 

Since $H_1(X)$ is freely generated by $A_1,\dots, A_n$ and $H_1(S_0)$ by $A_1,\dots, A_n, B_1,\dots, B_{n-1},$  the restriction
$$H^1(X;\RR)\to H^1(S_0;\RR)$$ is injective.
 The inclusion 
$$H^1(X;\CC_\tau)\to H^1(F;\CC_\tau)$$  is injective since, by the usual Poincar\'e duality argument, its image is a Lagrangian (in particular,  half-dimensional) subspace, and hence the kernel is trivial.  The inclusion 
$$H^1(F;\CC_\tau)\to H^1(S_0;\CC_\tau)$$
 is also injective; this follows from the long exact sequence for the pair $(F,S_0)$, excision, and the calculation $H^*(S^1\times I, S^1\times\{0,1\});\CC_\tau)=0$.  Hence the composite
  $H^1(X;\CC_\tau)\to H^1(F;\CC_\tau)\to H^1(S_0;\CC_\tau)$ is injective. 
 From the splitting of Equation (\ref{split11}) with $Z=S_0$, we conclude that
 \begin{equation}
\label{inj}H^1(X;su(2)_{\ad{\rho_{\ep,0}}})\to H^1(S_0;su(2)_{\ad{\rho_{\ep,0}}})
\end{equation}
 is injective, establishing claim (iv). \end{proof} 
 
   We now return to the proof of Proposition \ref{near singular}.  
We begin by analyzing  $H^i_\pi(X;su(2)_{\ad{\rho_{\ep,\pi}}})$  for small perturbations $\pi$. Since $\rho_{\ep,\pi}$ takes values in the diagonal $U(1)$ but is non-central, using Equation (\ref{h0pi}) one computes $$H^0_\pi(X;su(2)_{\ad{\rho_{\ep,\pi}}})=H^0_\pi(X;\RR)\oplus H^0_\pi(X;\CC_{\ep,\pi})=\RR\oplus 0=\RR.$$   

Next, $H^i_\pi(X;su(2)_{\ad {\rho_{\ep,\pi}}})=0$ for $i>3$, since $X$ is a 3-manifold. Also $H^3 (X;su(2)_{\ad {\rho_{\ep,0}}})=0$ since $X$ is homotopy equivalent to a 2-complex.    The upper semicontinuity property of $\dim H^3_\pi (X;su(2)_{\ad {\rho_{\ep,\pi}}})$ with respect to varying $\pi$ shows that after perhaps shrinking the neighborhood $\mathcal{U}_2$ of $0$ in $\mathcal{X}^p$, $H^3_\pi (X;su(2)_{\ad {\rho_{\ep,\pi}}})=0$ for all $\pi\in \mathcal{U}_2$.  

The upper semicontinuity property and Lemma \ref{dimcomp} shows that, after  shrinking the neighborhood $\mathcal{U}_2$ if needed, 
$H^2_\pi (X;su(2)_{\ad{ \rho_{\ep,\pi}}})=0$ for all $\pi\in \mathcal{U}_2$.  An Euler characteristic argument, Lemma \ref{dimcomp}, and Equation (\ref{split11}) now imply that for $\pi\in \mathcal{U}_2$ 
 \begin{equation}
\label{split12}
H^1_\pi (X;su(2)_{\ad {\rho_{\ep,\pi}}})=H^1_\pi (X;\RR)\oplus H^1_\pi (X;\CC_{\rho_{\ep,\pi}})=\RR^n\oplus \CC^{n-1}.
\end{equation}

 Now we utilize the identification of the perturbed character variety with the perturbed flat moduli space in gauge theory.  Since $H^2 _\pi (X; su(2)_{\ad{\rho_{\ep,\pi}}})=0$, the    Kuranishi model  for the perturbed flat moduli space (see \cite{herald1})  implies that 
 a neighborhood of $\rho_{\ep,\pi}$ in $\mathcal{M}_\pi(X)$ is identified  with the quotient of a slice manifold, parameterized by  a neighborhood of $0$ in $H^1(X;su(2)_{\ad{\rho_0}})\cong \RR^n\oplus \CC^{n-1}$,   by    the   action of the stabilizer    $U(1)=\Stab(\rho_{\ep, \pi}),$ 
which   acts trivially on $\RR^n$ and  with weight 2 on $\CC^{n-1}$.

    In terms of representations, this means the following.   Every $\rho\in \mathcal{M}_\pi(X)$ near $\rho_{\ep, \pi}$ can be conjugated so that $\rho:\pi_1(X\setminus\sqcup_iN_i)\to SU(2)$ satisfies $\rho(A_1)=\bbi e^{\bbi t}$ for some $t$ close to 0.
Choose   an invariant metric $d$  on  $\Hom(\pi_1(X\setminus \sqcup_i N_i), SU(2)) = SU(2)^ q $ and define
 \begin{equation}\label{OX}
\mathcal{O}_{\rho_{\ep, \pi}}(X)=\{\rho\in  \Hom(\pi_1(X\setminus \sqcup_i N_i),SU(2))~|~ \rho(A_1)\in U(1) , d(\rho,\rho_{\ep, \pi})<  \delta_X  , \rho(\mu_i)=F_i(\rho(\lambda_i)\}.
\end{equation}
  Each point in $\mathcal{O}_{\rho_{\ep, \pi}}(X)$ has stabilizer contained in $U(1)$, and the only $SU(2)$ elements mapping points in this subset back into this subset are $U(1)$ elements, so a neighborhood of $\rho_{\ep, \pi}$ in $\mathcal{M}_\pi(X)$ may be described by $\mathcal{O}_{\rho_{\ep, \pi}}(X)/U(1)$.    From the above Kuranishi argument,   we can choose $\delta_X$ sufficiently small so that $\mathcal{O}_{\rho_{\ep, \pi}}(X)$ is an open ball of dimension $3n-2$, and the tangent space $T_{\rho_{\ep,\pi}}\mathcal{O}_{\rho_{\ep, \pi}}(X)$ is canonically identified with $H^1_\pi(X;su(2)_{\ad{\rho_{\ep,\pi}}})$.  It follows that a  neighborhood of $\rho_{\ep, \pi}$ in $\mathcal{M}_\pi(X)$ is homeomorphic to $\RR^n\times \text{cone}(\CC P^{n-2})$.

We next focus our attention on the model for a neighborhood of the restriction of $\rho_{\ep, \pi}$ to $\pi_1(S_0)$.  Notice that this restriction  is independent of $\pi$, since   $\rho_{\ep,\pi}(B_i)=\rho_{\ep,\pi}(A_i)=\ep_i\bbi=\rho_{\ep,0}(A_i)$.  Denote this restriction by $\rho_\ep$.

Define 
$$\mathcal{O}_{\rho_\ep}(S_0)=\{\rho\in  \Hom(\pi_1(S_0),SU(2))~|~ \rho(A_1)\in U(1) , d(\rho,\rho_\ep)<\delta_{S_0} \},
$$
using an invariant metric $d$ on $\Hom(\pi_1 (S_0), SU(2))$.   Then, after perhaps adjusting the size of   $\delta_X$, we obtain a $U(1)$ equivariant restriction map  $\mathcal{O}_{\rho_{\ep,\pi}}(X)\to \mathcal{O}_{\rho_\ep}(S_0)$. 

 As is the case for  $\mathcal{O}_{\rho_{\ep, \pi}}(X)$,  $\mathcal{O}_{\rho_\ep}(S_0)$ gives a slice of the conjugation action of $SU(2)$ on $\Hom(\pi_1 (S_0), SU(2))$ near $\rho_\ep$.   The tangent space of $\mathcal{O}_{\rho_\ep}(S_0)$ at $\rho_\ep$ is canonically identified with $H^1 (S_0;su(2)_{\ad{\rho_{\ep,\pi}}})=\RR^{2n-1} \oplus \CC^{2n-2}$.  
 One can see these facts explicitly as follows.  
Recall that $\pi_1(S_0)$ is free on the $2n-1$ generators $A_1, A_2,B_2,\dots, A_{n}, B_{n}$.  Consider the map ${\bf r}:\RR^{2n-1} \oplus \CC^{2n-2}\to \Hom(\pi_1(S_0),SU(2))$ which sends
$(t_1,\dots, t_{2n-1}, z_2,\dots, z_{2n-1})$ to the unique homomorphism satisfying
 \begin{equation}
\label{mapr}A_1\mapsto \bbi e^{t_1\bbi}, \text{ and }A_i\mapsto \ep_i\bbi e^{t_i\bbi + z_i\bbj},~
B_i\mapsto \ep_i\bbi e^{t_{n-1+i}\bbi + z_{n-1+i}\bbj} \text{ for } i=2,\dots, n.
\end{equation}

 Then ${\bf r}$ sends a neighborhood of $0$ diffeomorphically and $U(1)$ equivariantly to a neighborhood of $\rho_\ep $ in $\mathcal{O}_{\rho_\ep}(S_0)$, since every representation near $\rho_\ep$ can be conjugated to this form, uniquely up to the $U(1)$ action.

 Using this  slice for the $SU(2)$ action on $\Hom(\pi_1(S_0), SU(2))$ we conclude  that  $\rho_\ep$ has  a neighborhood  in $\mathcal{M}(S_0)$ is homeomorphic to   $\RR^{2n-1}\times \text{cone}(\CC P^{2n-3})$.  
   Furthermore, we have arranged the two slices in such a way that the  restriction map sends one to the other     $U(1)$ equivariantly.    The restriction map $\mathcal{O}_{\rho_{\ep,\pi}}(X)\to \mathcal{O}_{\rho_\ep}(S_0)$ has injective differential when $\pi=0$, because 
$$H^1(X;su(2)_{\ad{\rho_{\ep,0}}})\to H^1(S_0;su(2)_{\ad{\rho_\ep}})$$ is injective, by 
Lemma \ref{dimcomp}.  

It follows that, by choosing $\delta_X$ smaller if needed, we may assume that $\mathcal{O}_{\rho_{\ep,\pi}}(X)\to \mathcal{O}_{\rho_\ep}(S_0)$ is an $U(1)$  {\em  equivariant embedding}.  In particular, we conclude that  every $\rho$ in $\mathcal{O}_{\rho_{\ep,\pi}}(X)$ on which $U(1)$ acts non-trivially  restricts to a point in $\mathcal{O}_{\rho_\ep}(S_0)$ on which $U(1)$ acts non-trivially.  In other words, all   $\rho\in \mathcal{M}_\pi(X)^{\ZZ/2}$ close enough to $\rho_{\ep,\pi}$  restrict  to irreducible representations of $\pi_1(S_0)$.

\medskip

  The restriction of the map
  $$T:\Hom(\pi_1(X),SU(2))\to \RR^n,~ T(\rho)=(\Real(\rho(A_1),\dots,
  \Real(\rho(A_n)))$$
  to $  \mathcal{O}_{\rho_{\ep,0}}(X)$ is submersive near $\rho_{\ep,0}$ since, defining 
  \begin{equation}
\label{mapm}
m:\RR^n\to \Hom(H_1(X),SU(2))\subset \Hom(\pi_1(X), SU(2)),~
m(t_1,\dots, t_n)(A_j)=\ep_i \bbi e^{t_j\bbi}, 
\end{equation}
  the 
 composite 
 $$\RR^n\xrightarrow{m} \Hom(H_1(X),SU(2))\xrightarrow{T}\RR^n$$  is the submersion $(t_1,\dots, t_n)\mapsto (\sin(t_1),\dots, \sin(t_n))$.
 Again shrinking $\mathcal{U}_2$ if necessary, we may assume that 
  \begin{equation}\label{submersive at ep}
\left. T  \right|_{\mathcal{O}_{\rho_{\ep,\pi}}(X)}   :\mathcal{O}_{\rho_{\ep,\pi}}(X)\to\RR^n,~ T(\rho)=(\Real(\rho(A_1),\dots,
  \Real(\rho(A_n))
\end{equation}
is submersive for all $\pi\in \mathcal{U}_2$. This map is clearly invariant with respect to the $U(1)$ action, and therefore its restriction to the fixed set of the $U(1)$ action is submersive.  The fixed set is locally equivariantly homeomorphic to the fixed set of the $U(1)$ action $H^1_\pi(X;su(2)_{\ad{\rho_{\ep,\pi}}})$, which is precisely the summand $H^1_\pi(X;\RR)=\RR^n$ in Equation (\ref{split12}).  

It follows that  $T^{-1}(0)\cap  \mathcal{O}_{\rho_{\ep,\pi}}(X)$ is a smooth $(2n-2)$-dimensional submanifold passing through $\rho_{\ep,\pi}$, invariant under the $U(1)$ action and transverse to 
the fixed point set of the $U(1)$ action.   Hence every point in $T^{-1}(0)\cap   \mathcal{O}_{\rho_{\ep,\pi}}(X)$ except $\rho_{\ep,\pi}$ has trivial stabilizer, and is therefore irreducible.   Moreover, by making $\delta_X$ even smaller if needed, we may assume that $T^{-1}(0)\cap  \mathcal{O}_{\rho_{\ep,\pi}}(X)$ is $U(1)$ equivariantly diffeomorphic to $\CC^{n-1}$ with the weight 2 action.   
 
  Since $T^{-1}(0)\cap \mathcal{O}_{\rho_{\ep,\pi}}(X)$ consists precisely of those traceless representations near $\rho_{\ep, \pi}$, gauge fixed so that $A_1$ is sent to $\bbi$, 
$$\big(T^{-1}(0)\cap  \mathcal{O}_{\rho_{\ep,\pi}}(X)\big)/ U(1)$$   
 is, on the one hand, homeomorphic to a neighborhood of $\rho_{\ep,\pi}$ in  $R_\pi(Y,L)$, and, on the other hand, 
 homeomorphic to ${\rm cone}(\CC P^{n-2})$, as asserted.

\medskip
 
  Similarly, the map $\widetilde T:\Hom(\pi_1(S_0),SU(2))\to \RR^{2n-1}$ given by $$\widetilde T(\rho)=(\Real(\rho(A_1)),\dots, \Real(\rho(A_n)),\Real(\rho(B_2)),\dots,\Real(\rho(B_{n})))$$  is $U(1)$ equivariant and submersive,  as is its restriction to the $(6n-5)$-ball $\mathcal{O}_{\rho_\ep}(S_0)$.   
One sees this by precomposing with the map   ${\bf r}$ defined  in Equation  (\ref{mapr})  (note that we have excluded $\Real(\rho(B_1))$).  

Thus the $U(1)$ equivariant restriction map
 \begin{equation}
\label{local1}T^{-1}(0)\cap  \mathcal{O}_{\rho_{\ep,\pi}}(X) \to \widetilde T^{-1}(0)\cap  \mathcal{O}_{\rho_\ep}(S_0)
\end{equation}
has injective differential, since the restriction  $H^1(X;\CC_{\ep,\pi})=\CC^{n-1}\to H^1(S_0;\CC_\ep)=\CC^{2n-2}$ is injective.  
Hence (after perhaps making the neighborhoods smaller)   the map (\ref{local1}) is an embedding which descends to a radial embedding
$${\rm cone}(\CC P^{n-2})\to {\rm cone}(\CC P^{2n-3}).$$
 
The $(4n-4)$-ball $\widetilde T^{-1}(0)\cap  \mathcal{O}_{\rho_\ep}(S_0)$ contains the subset of all  traceless representations of $\pi_1(S_0)$ near $\rho_\ep$ which send  $A_1$ to $\bbi$.
Indeed, this subset is the preimage of $0$ for  the map
 \begin{equation}
\label{Tb1} T_{B_1}:\widetilde T^{-1}(0)\cap  \mathcal{O}_{\rho_\ep}(S_0)\to \RR,~ T_{B_1}(\rho)=\Real(\rho(B_1)). 
\end{equation}

  As mentioned before, is known (and elementary to show) that the map (\ref{Tb1})  is   singular at $\rho_\ep$ and a submersion  at all other points of $T_{B_1} ^{-1}(0)$.  Denote by $\widetilde M(n)$ the link of this singularity,  obtained by intersecting $T_{B_1}^{-1}(0)$ with a small transverse $(4n-5)$-sphere in $\widetilde T^{-1}(0)\cap  \mathcal{O}_{\rho_\ep}(S_0)$.  The image of the embedding (\ref{local1}) lies in $T_{B_1}^{-1}(0)$,
and hence $\widetilde{M}(n)$ is non-empty, and therefore is a smooth manifold of dimension $4n-6$. 

Since the map (\ref{Tb1}) is $U(1)$ invariant, $U(1)$ acts on $T_{B_1}^{-1}(0)\cap \widetilde T^{-1}(0)\cap  \mathcal{O}_{\rho_\ep}(S_0)$, freely away from $\rho_\ep$,   with orbit space a neighborhood of $\rho_\ep$ in 
$R(S^2,\punctures)$.  In particular, this neighborhood is   a cone on the manifold $M(n)\subset \CC P^{2n-3}$ obtained as the orbit space $\widetilde{M}(n)$ by the  free $U(1)$ action. 

Since the image of the map (\ref{local1}) lies in $T_{B_1}^{-1}(0)$, we conclude that near $\rho_{\ep, \pi}$, the restriction 
$R_\pi(Y,L)\to R(S^2,\punctures)$ is a radial embedding
$${\rm cone}(\CC P^{n-2})\to {\rm cone}(M(n)),$$
completing the proof. 
  \end{proof}

 \section{The main result}\label{main} 
In this section, we combine the earlier transversality results in the paper to obtain a global description of the structure of $R_\pi (Y,L)$ for generic small perturbations $\pi$.

\begin{thm}\label{thm1}  Assume $Y$ is a $\ZZ$-homology 3-ball containing an $n$-stranded tangle, with $n\geq 2$.     
There exist arbitrarily small perturbations $\pi$ such that
  $R_\pi(Y,L)$  is compact and  is the union of two strata
$$  R_\pi(Y,L)= 
  R_\pi(Y,L)^{\ZZ/2,\ZZ/2}\sqcup R_\pi(Y,L)^{U(1),U(1)}, $$ 
with the following properties:  
\begin{itemize} 
\item 
 $  R_\pi(Y,L)^{\ZZ/2,\ZZ/2}$  a smooth manifold of dimension $2n-3$, and $R_\pi(Y,L)^{U(1),U(1)}$ a finite set of $2^{n-1}$ points.

\item The restriction map  $  R_\pi(Y,L) \to R(S^2,\punctures)$  takes  the 0-manifold $R_\pi(Y,L)^{U(1),U(1)}$  into the 0-manifold $R(S^2,\punctures)^{U(1)}$,  and Lagrangian immerses the $(2n-3)$-manifold $R_\pi(Y,L)^{\ZZ/2,\ZZ/2}$ in the symplectic $(4n-6)$-manifold $R(S^2,\punctures)^{\ZZ/2}$.

 \item Each point in $R_\pi(Y,L)^{U(1),U(1)}$ has a neighborhood in $R_\pi(Y,L)$ homeomorphic to a cone on $\CC P^{n-2}$. Each point in $R(S^2,\punctures)^{U(1),U(1)}$ has a neighborhood in $R(S^2,\punctures)$ homeomorphic to a cone on a $(4n-5)$ manifold $M(n)$. The restriction 
$R_\pi(Y,L)\to R(S^2,\punctures)$ is a radial embedding
 ${\rm cone}(\CC P^{n-2})\to {\rm cone}(M(n)).$ \end{itemize} 
 
\end{thm}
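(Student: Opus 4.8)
The plan is to assemble Theorem \ref{thm1} from the three transversality ingredients already developed: Theorem \ref{herald} (the structure of $\mathcal{M}_\pi(X)$ and its Lagrangian restriction to $\mathcal{M}(F)^{\ZZ/2}$), Propositions \ref{abund1}, \ref{abund2}, \ref{abund3} (abundance of holonomy perturbations supported in the collar $F\times I$), and Proposition \ref{near singular} (the local cone structure near the abelian points). First I would fix a perturbation $\pi_0$ of the interior of $X$ as in Theorem \ref{herald} so that $\mathcal{M}_{\pi_0}(X)$ has the stated stratified structure and $j:\mathcal{M}_{\pi_0}(X)^{\ZZ/2,\ZZ/2}\to \mathcal{M}(F)^{\ZZ/2}$ is a Lagrangian immersion; then I would add a further collar perturbation $\pi_{\bf t,\bf C}$ supported in $F\times I\subset X$, parameterized by ${\bf t}\in\RR^K$, which by Theorem \ref{hamiltonian} post-composes $j$ with a Hamiltonian isotopy $\Phi_{\bf C}(-,{\bf t})$ of $\mathcal{M}(F)^{\ZZ/2}$ (and a stratum-preserving homeomorphism on the lower strata by Proposition \ref{stratpreserving}). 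Since Hamiltonian isotopies preserve the Lagrangian condition, the composite $\Phi_{\bf C}(-,{\bf t})\circ j$ remains a Lagrangian immersion for every ${\bf t}$, so the only thing left to arrange is transversality of this map to $T^{-1}(0)$ in a stratified sense.

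Next I would carry out the stratum-by-stratum transversality argument. By Proposition \ref{near singular}, after shrinking to a small perturbation we already know $R_\pi(Y,L)^{U(1),U(1)}$ consists of exactly $2^{n-1}$ points, each with a cone-on-$\CC P^{n-2}$ neighborhood on which the restriction to $R(S^2,\punctures)$ is a radial embedding into $\text{cone}(M(n))$ — and that a deleted neighborhood of each such point contains only $(\ZZ/2,\ZZ/2)$ points. So I would excise small invariant neighborhoods of these finitely many abelian points and work on the complement, which is compact (using the compactness of $\mathcal{M}_\pi(X)$ from Theorem \ref{herald}, equivalently Equation (\ref{compact})). On the $(\ZZ/2,\ZZ/2)$ stratum of $\mathcal{M}_{\pi_0}(X)$, the composite $T\circ j:\mathcal{M}_{\pi_0}(X)^{\ZZ/2,\ZZ/2}\to\RR^n$ need not have $0$ as a regular value, but Proposition \ref{abund2} (applied at points where the restriction to $S_0$ is irreducible) together with Proposition \ref{abund1} (near the locus $r^{-1}(R(S^2,\punctures)^{U(1)})\cap\mathcal{M}(F)^{\ZZ/2}$, which is a compact union of tori, where we instead need $\Phi_{\bf C}(\rho,-)$ itself to be submersive onto $\mathcal{M}(F)^{\ZZ/2}$) furnishes, for a residual set of ${\bf t}$ near $0$, a perturbation making $T\circ\Phi_{\bf C}(-,{\bf t})\circ j$ transverse to $0$ on the top stratum. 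A standard parametric transversality (Sard--Smale) argument over the compact region, combined with the fact that adding perturbation curves does not decrease the rank of the differential, produces a single arbitrarily small $\pi$ doing all of this simultaneously. Then $R_\pi(Y,L)^{\ZZ/2,\ZZ/2}=(T\circ\Phi_{\bf C}(-,{\bf t})\circ j)^{-1}(0)$ is a smooth manifold; its dimension is $(3n-3)-n=2n-3$ since $\mathcal{M}_\pi(X)^{\ZZ/2,\ZZ/2}$ has dimension $3n-3$ and we cut down by $n$ trace conditions.

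Then I would run the symplectic reduction step. Having arranged that $\Phi_{\bf C}(-,{\bf t})\circ j:\mathcal{M}_\pi(X)^{\ZZ/2,\ZZ/2}\to\mathcal{M}(F)_0^{\ZZ/2}\cap r^{-1}(\mathcal{M}(S_0)^{\ZZ/2})$ is a Lagrangian immersion transverse to $(\mu')^{-1}(0)$ — which follows because the level sets of $T$ and $\mu$ coincide (Equation (\ref{Tregular})) and $0$ is a regular value of $T$ restricted to that stratum — Corollary \ref{lagreg} immediately gives that its symplectic reduction $R_\pi(Y,L)^{\ZZ/2,\ZZ/2}=(T\circ\Phi_{\bf C}(-,{\bf t})\circ j)^{-1}(0)$ Lagrangian immerses into $R(S^2,\punctures)^{\ZZ/2}$, a symplectic manifold of dimension $4n-6$ by Theorem \ref{surj}(iv) and Proposition \ref{lin}. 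The restriction map sends $R_\pi(Y,L)^{U(1),U(1)}$ into $R(S^2,\punctures)^{U(1)}$ because abelian-on-$S_0$ representations restrict to abelian representations of $\pi_1(S_0)$, and the cone statement is exactly Proposition \ref{near singular}(ii) together with the identification of $M(n)$ there.

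The main obstacle I anticipate is the transversality near the abelian stratum, i.e., reconciling Proposition \ref{near singular} with the global argument: one must ensure that the small perturbation $\pi$ chosen to achieve stratified transversality on the (noncompact, but relatively compact after excision) top stratum lies inside the neighborhood $\mathcal{U}_2$ of Proposition \ref{near singular}, so that the cone structure at the $2^{n-1}$ abelian points is simultaneously in force; and one must check that no new abelian points can be created by the perturbation and that the $(\ZZ/2,\ZZ/2)$ stratum does not accumulate incorrectly onto them, which is handled by the ``deleted neighborhood contains only $(\ZZ/2,\ZZ/2)$ points'' clause and the cone-bundle neighborhood statement of Theorem \ref{herald}. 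The bookkeeping of intersecting perturbation neighborhoods $N_i$ (the interior ones from Theorem \ref{herald} versus the collar ones from Definition \ref{bigphi}) also needs care, but is routine: one simply takes their disjoint union as the support of the combined perturbation and shrinks all the relevant neighborhoods $\mathcal{U}_1,\mathcal{U}_2,U_1,U_3$ to a common one.
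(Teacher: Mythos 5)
Your outline follows the paper's own strategy quite closely (Theorem \ref{herald} for the interior perturbation, collar perturbations and the isotopies $\Phi_{\bf C}$, the abundance propositions, Proposition \ref{near singular} for the cone structure, and Corollary \ref{lagreg} for the reduction), but as written it has a concrete gap at $n=2$. For $n=2$, Theorem \ref{herald} allows a nonempty finite stratum $\mathcal{M}_\pi(X)^{\ZZ/2,U(1)}$, consisting of representations that are irreducible on $X$ but restrict to \emph{abelian} representations of $\pi_1(F)$. If such a point lies in $T^{-1}(0)$ it yields an element of $R_\pi(Y,L)^{\ZZ/2,U(1)}$, destroying the claimed two-stratum decomposition, and neither tool you invoke applies there: Propositions \ref{abund1} and \ref{abund2} both hypothesize that the boundary restriction is an irreducible representation of $\pi_1(F)$, while your excision step only removes neighborhoods of the $(U(1),U(1))$ points of $R_\pi(Y,L)$. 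The missing ingredient is Proposition \ref{abund3}, which you list but never use: since $T\circ\Phi_{\bf C}(\rho,-)$ is submersive at abelian boundary restrictions with $T=0$, a Sard argument over the finite set $\mathcal{M}_\pi(X)^{\ZZ/2,U(1)}\times U$ shows that a residual set of ${\bf t}$ pushes this stratum off $T^{-1}(0)$; this is exactly the paper's Lemma \ref{n=2case}. Relatedly, the conclusion you should draw from Proposition \ref{abund1} is not (only) transversality of $T\circ\Phi_{\bf C}(-,{\bf t})\circ j$ to $0$, but that for generic ${\bf t}$ the boundary image of the top stratum misses $E=r^{-1}(R(S^2,\punctures)^{U(1)})\cap\mathcal{M}(F)^{\ZZ/2}$ altogether (submersivity onto $\mathcal{M}(F)^{\ZZ/2}$ plus the codimension count $5n-5>3n-3$, as in Lemma \ref{missE}); this avoidance is what makes $R_\pi(Y,L)^{\ZZ/2,U(1)}$ empty and places the image inside the locus $r^{-1}(\mathcal{M}(S_0)^{\ZZ/2})$ where Corollary \ref{lagreg} is applicable.

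The second soft spot is the one you yourself flag as the main obstacle, and it is where real work remains rather than routine bookkeeping: the paper first fixes ${\bf t}_1$ eliminating the bad strata, then perturbs again in a small neighborhood of ${\bf t}_1$ to achieve transversality, splitting the zero set into the equivariant slice neighborhoods of the $2^{n-1}$ abelian points (where the submersivity in the $\RR^n$-directions of the slice, inherited from Proposition \ref{near singular}, persists) and a compact complementary region handled by Proposition \ref{abund2}; crucially it then proves by a compactness/limiting argument (Lemma \ref{yellowsub}) that for ${\bf t}$ close to ${\bf t}_1$ the \emph{entire} set $(T\circ j_{\bf t})^{-1}(0)$ stays inside the union of these two controlled neighborhoods, so no uncontrolled solutions appear elsewhere. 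Your sketch asserts this can be arranged but does not supply that argument. With the $n=2$ lemma and this containment argument added, your proposal coincides with the paper's proof.
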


\begin{proof}  The fact that $R_\pi(Y,L)$ is compact for any perturbation data $\pi$ is  explained in Section \ref{tangentspaces}, following  Equation (\ref{compact}).

  Choose a collection of perturbation curves $\sqcup_{i=1}^p N_i$ in $X$ as in Theorem \ref{herald}. 
 Next,  choose a small collar neighborhood of $F=\partial X$   in $X$ which misses these $p$ perturbation curves  $\sqcup_{i=1}^p N_i$.
 Let ${\bf C}=(C_1,\dots, C_K)$    denote an ordering  of  the special perturbation curves  $\mathcal{C}$, as described in Section  \ref{abundancesec}. Use the collar variable  to push them to disjoint curves into this collar. As in  Section \ref{sec2},  this determines a family of isotopies $$ \Phi_{\bf C}:\RR^K\to \text{Homeo}(\mathcal{M}(F),\mathcal{M}(F)), $$ with $\Phi_{\bf C}(0)=\text{Id}$,
 associated to these perturbation curves and perturbation functions $t_i\sin(\alpha),$ where  ${\bf t}=(t_1, \dots, t_K) \in \RR^K$.
More explicitly,   ${\bf t}\in \RR^K$ determines perturbation data $$\pi_{\bf t}=\{C_i, t_i\sin(\alpha)\}_{i=1} ^K$$ so that for any $f\in  \mathcal{X}^p$ determining perturbation data $\pi=\{N_i,f_i\}$, 
$$ \mathcal{M}_{\pi}(X)\cong \mathcal{M}_{\pi\cup \pi_{\bf t}}(X)$$
by a homeomorphism  such that the two maps 
  $$
   \Phi_{\bf C}({\bf t})\circ j: \mathcal{M}_{\pi}(X)\to \mathcal{M}(F)
  \text{ and } j_{\bf t}:\mathcal{M}_{\pi\cup \pi_{\bf t}}(X)\to \mathcal{M}(F)  $$
coincide,   where both $j:\mathcal{M}_\pi(X) \to \mathcal {M}(F)$ and $j_t:\mathcal{M}_{\pi \cup \pi_t } (X) \to \mathcal{M}(F)$ represent restriction to the boundary.

  \medskip
  
 Let $\mathcal{U}\subset \mathcal{X}^p$ and $U\subset \RR^K$ be neighborhoods of zero such the conclusion of Theorem \ref{herald} holds for $\pi$ in a residual $\mathcal R\subset  \mathcal{U}$ and such that the conclusion of Proposition \ref{near singular} holds for $R_{\pi\cup \pi_{\bf t}}(Y,L)$, for any $(\pi, \pi_{\bf t})\in \mathcal{U}\times U$.  
  Assume, furthermore,  that $U$ lies in the intersection $U_1\cap  U_3$ of the neighborhoods given in Propositions \ref{abund1} and  \ref{abund3}.  
 Fix any $\pi \in \mathcal {R}$.

 All further perturbations required to complete the proof are of type $\pi_{\bf t}$. In particular, these additional perturbations change neither the homeomorphism type of $\mathcal{M}_\pi(X)$ nor, by Proposition \ref{stratpreserving}, the decompositions of Equations (\ref{strat1}) and (\ref{strat2}).

\bigskip

 The space $R_{\pi\cup \pi_{\bf t}}(Y,L)$ is a subspace of $\mathcal{M}_{\pi\cup \pi_{\bf t}}(X)$, namely, 
$$R_{\pi\cup \pi_{\bf t}}(Y,L)=(T\circ j_{\bf t})^{-1}(0)$$
where $T:\mathcal{M}(F)\to \RR^n$ is the map defined in Equation (\ref{Tmap}).     It will be convenient to define the map 
$$J :\mathcal{M}_\pi(X)\times U \to\mathcal{M}(F), ~J(\rho,{\bf t})= j_{\bf t}(\rho).$$  
\medskip 

The case when $n=2$ requires a special treatment since $\mathcal{M}_\pi(X)^{\ZZ/2,U(1)}$ need not be empty. 
\begin{lem}\label{n=2case} 
When $n=2$, there exists a residual subset    $V'\subset U$ such  that, for each  ${\bf t}$ in this subset, 
$$T^{-1}(0) \cap \left( \mathcal{M}_{\pi\cup \pi_{\bf t}}(X)^{\ZZ/2, U(1)} \right) =\emptyset .$$
\end{lem}
\begin{proof} The set $\mathcal{M}_{\pi\cup \pi_{\bf t}}(X)^{\ZZ/2, U(1)}$ is finite by Theorem \ref{herald}.

Since $U\subset U_3$, Proposition \ref{abund3} implies that 
 the restriction 
 $$\left. T\circ J\right| _{ \mathcal{M}_{\pi}(X)^{\ZZ/2, U(1)}\times U } :\mathcal{M}_{\pi}(X)^{\ZZ/2, U(1)}\times U\to \RR^2$$ has $0$ as a regular value.   
 Thus 
 $$\Sigma_1=(T\circ J)^{-1}(0)\cap (\mathcal{M}_{\pi}(X)^{\ZZ/2,U(1)}\times U)$$ is a smooth submanifold of $\mathcal{M}_{\pi}(X)^{\ZZ/2,U(1)}\times U$ of codimension $2$.  Hence the  set $V'\subset U$ of regular values of the projection map $\Sigma_1 \to U$ is a residual set, by Sard's Theorem.  But since $\Sigma_1$ has smaller dimension than $U$, the preimage of any regular value ${\bf t} \in V'$ in  $\Sigma_1$ is empty, so  $j_{\bf t}(\mathcal{M}_{\pi}(X)^{\ZZ/2,U(1)})$ misses $T^{-1}(0)$.  This is equivalent to saying that 
 $$T:\mathcal{M}_{\pi\cup \pi_{\bf t}}(X)^{\ZZ/2,U(1)}\to \RR^2$$
 misses zero. 
\end{proof}

For convenience, denote $$E=r^{-1}(R(S^2,\punctures)^{U(1)})\cap \mathcal{M}(F)^{\ZZ/2}.$$ Theorem  \ref{surj} shows that $E$ is a finite union of $(n-1)$-dimensional tori, namely, those orbits of the ${\bf T}$ action with non-trivial stabilizer.  

\begin{lem}\label{missE} For any $n\geq 2$, there exists a residual set $V''\subset U$ such that if ${\bf t}\in V''$, $\mathcal{M}_{\pi \cup \pi_{\bf t}} (X) ^{\ZZ/2, \ZZ/2}$ misses $E$ and so $R_{\pi\cup \pi_{\bf t}}(Y,L)^{\ZZ/2,U(1)}$ is empty.
\end{lem}
\begin{proof}
 By the transversality assumptions for the perturbation $\pi$, $\mathcal{M}_{\pi}(X)^{\ZZ/2,\ZZ/2}$ is  a smooth $3n-3$ manifold.
Recall that  $\mathcal{M}(F)^{\ZZ/2}$  is a smooth manifold of dimension $6n-6$, so $E$ has codimension $5n-5$. 

Since,  by assumption, $U$ is contained in the neighborhood $U_1$ provided in Proposition \ref{abund1}, we have that   
$$\left. \Phi_C\right|_{E\times U} : E\times U \to \mathcal{M}(F)^{\ZZ/2}$$ 
is a submersion.  In particular, it is transverse to $E$.   
Thus the preimage 
  $$\Sigma_2= J^{-1}(E)\cap (\mathcal{M}_{\pi}(X)^{\ZZ/2,\ZZ/2}\times U)$$
  is a submanifold of codimension $5n-5$, and hence of dimension $K+3n-3-(5n-5)=K-2n+2<K$.       The set $V'' $ of regular values for the projection $\Sigma_2\subset \mathcal{M}_{\pi}(X)^{\ZZ/2,\ZZ/2}\times U\to U$ is residual,  by Sard's theorem.    Since the dimension of $\Sigma_2 $ is less than that of $U$, the preimage in $\Sigma_1$ of any  regular value ${\bf t}\in V''$ is empty.  This means that   $ j_{\bf t}(\mathcal{M}_\pi(X)^{\ZZ/2, \ZZ/2})$ misses $E$, or, equivalently, that $j_0(\mathcal{M} _{\pi \cup \pi_{\bf t}} (X) ^{\ZZ/2, \ZZ/2})$ misses $E$.   It follows that, for ${\bf t}\in V''$, $R_{\pi \cup \pi_{\bf t} } (Y,L)^{\ZZ/2, U(1)} $ is empty. \end{proof}

To complete the proof of Theorem \ref{thm1}, we now fix a ${\bf t}_1 \in V'\cap V''$, or, if $n>2$, simply ${\bf t}_1  \in V''$.

\begin{lem}\label{yellowsub}  There exists a neighborhood $U_2\subset \RR^K$ of ${\bf t}_1 $ and a residual subset $V_2\subset U_2$ such that, for ${\bf t}\in V_2$, the conclusion of Theorem \ref{thm1} applies to $R_{\pi \cup \pi_{\bf t}} (Y,L).$    

\end{lem}  
\begin{proof}
From the assumptions on $\mathcal U$ and $U$  at the beginning of the proof of Theorem \ref{thm1},  
$\mathcal{M}_{\pi \cup \pi_{{\bf t}_1}}(X)^{U(1),U(1)} $ consists of $2^{n-1}$  points, each with a neighborhood in $\mathcal{M}_{\pi \cup \pi_{{\bf t}_1}}(X)$ which is a quotient of a slice    $\mathcal{O}_{\rho_{\pi\cup \pi_{{\bf t}_1} ,\ep}}$.  
 
For each $\ep \in \{ \pm 1 \}^{n-1} $, let $\phi_\ep: \RR^n \oplus \CC^{n-1} \to \mathcal{O}_{\rho_{\pi \cup \pi_{{\bf t}_1 } }, \ep}$ be a $U(1)$ equivariant parameterization.  For ease of notation, we define $\mathcal{O} =\sqcup_{\ep} \mathcal{O}_{\rho_{\pi \cup \pi_{{\bf t}_1 } }, \ep}$ and $W_{\mathcal{O}}=\mathcal{O}/U(1)$.  

Let $$\mathcal{P}:\{ \pm 1\} ^{n-1} \times \left( \RR^n  \oplus \CC^{n-1} \right) \times \RR^K \to \RR^n$$ by 
$$\mathcal{P}\left( \ep, ({\bf v}, {\bf z}), {\bf t} \right) = T\circ j_{\bf t} \left( \phi_\ep ({\bf v}, {\bf z}) \right).$$  Because $(\pi, \pi_{{\bf t}_t} ) \in \mathcal{U}\times U_2$, the differential $d\mathcal{P}_{(\ep, (0,0), {\bf t}_1 )} $ maps the $\RR^n\oplus \{0\} \subset \RR^n \oplus \CC^{n-1}$ onto $\RR^n$.  Since surjectivity of the differential is an open condition, we can find neighborhoods  $U_0 \subset \RR^n\oplus \CC^{n-1}$ of $0$ and $U_2 '\subset \RR^K$ of ${\bf t}_1$ so that $\mathcal{P}$ restricted to $\{\pm 1\}^{n-1} \times U_0 \times U_2 '$ is a submersion.  Rather than complicate the notation with restriction to $U_0$, change the parameterizations $\phi_\ep$ by precomposition with a diffeomorphism from $\RR^n \oplus \CC^{n-1}$ onto a small open ball, so that we can assume $\mathcal{P}$ is a submersion on $\{\pm 1\}^{n-1} \times (\RR^n\oplus \CC^{n-1} )  \times U_2 '$, and $\mathcal{O}$ is the union of the images after these changes.  

Let $\mathcal{K}$ denote the compact set 
$$\mathcal{K} = \left( (T\circ j_0)^{-1} (0) \cap \mathcal{M}_{\pi \cup \pi_{{\bf t}_1}} (X) \right) \setminus W_{\mathcal{O}}.$$  Proposition \ref{abund2} implies that the map 
$$
\mathcal{M}_{\pi \cup \pi_{{\bf t}_1}} (X) ^{\ZZ/2, \ZZ/2} \times \RR^K \to \RR^n$$ obtained by restricting $T\circ J$ is a submersion along $\mathcal{K} \times \{ {\bf t}_1 \}$.  Therefore there is a neighborhood $W_{\mathcal{K}} \times U_2 ''$ on which this map is a submersion.  Set $U_2=U_2 ' \cap U_2 ''$.  Then 
$$\Sigma_2 ' = (T\circ J)^{-1} (0) \cap W_{\mathcal K} \times U_2 \mbox{ and } \Sigma_2 '' = 
\mathcal{P}^{-1} (0) \cap \left( \{ \pm 1\}^{n-1} \times (\RR^n \oplus \CC^{n-1}) \times  U_2\right) .   $$ 
 are submanifolds of dimensions  $2n-3+K$ and $2n-2+K$,    respectively, and $\Sigma_2 ''$ is $U(1)$ invariant, where we view the $U(1)$ action on the second factor as trivial.

Let $V_2 '\subset U_2$ be the set of regular values for the projection $\Sigma_2 '\to U_2$, and $V_2 '' \subset U_2$ the set of regular values for the projection $\Sigma_2 ''\to U_2$.  The intersection $V_2=V_2' \cap V_2 ''$ is a residual subset of $U_2$.  For ${\bf t}\in V_2$, 
$$
(T\circ j_{\bf t} )^{-1} (0) \cap \mathcal{M}_{\pi \cup \pi_{\bf t}} (X) \cap \left( W_{\mathcal{O} } \cup W_{\mathcal{K}} \right) $$ is a manifold, except at the cone points  where the $U(1)$ action on $\mathcal{O}$ creates $\operatorname{cone}({\CC P^{n-2} })$ singularities in the quotient.  

Next, we claim that by shrinking $U_2$, we can insure that whenever ${\bf t}\in U_2$, $((T\circ j_{\bf t} )^{-1} (0) \cap \mathcal{M}_{\pi \cup \pi_{\bf t}} (X)) \subset W_{\mathcal{K}} \cup W_{\mathcal{O}}. $  To see this, suppose that there exists a sequence ${\bf s}_k$  in $\RR^K$ converging to ${\bf t}_1$, and $\rho_k \in \mathcal{M}_{\pi}(X) \not\in  W_{\mathcal{K}} \cup W_{\mathcal{O}} $ such that 
$T \left( j_{{\bf s}_k} (\rho_k)\right) =0.$  By compactness of $\mathcal{M}_{\pi}(X)$, we can choose replace the sequence by a subsequence (which we reindex to avoid multiple subscripts) such that $\rho_k \to \rho_\infty \in \mathcal{M}_{\pi} (X) $.  By continuity, $0=\lim_{k\to \infty} T\left( \Phi_C (j(\rho_k) , {\bf s}_k)\right) =T\left( \Phi_C(\rho_\infty, {\bf t}_1 ) \right) = 0$  so $\rho_\infty \in (T\circ j_{t_1} ) ^{-1}(0) \cap \mathcal{M}_{\pi \cap \pi_{{\bf t}_1}} (X) $.  But this contracts the fact that none of the $\rho_k$ are in the neighborhood $W_{\mathcal{K}} \cup W_{\mathcal{O}} $ of this limit point. Therefore, there cannot be such a sequence $({\bf s}_k, \rho_k)$, which proves our claim about shrinking $U_2$. \end{proof}

  The only  assertion in Theorem \ref{thm1} remaining to be proven   is that $R_{\pi\cup \pi_{\bf t}}(Y,L)^{\ZZ/2,\ZZ/2}\to R(S^2,\{a_i,b_i\}_{i=1}^n)^{\ZZ/2}$ is a Lagrangian immersion.
  
  Theorem \ref{herald} asserts that $j_0:\mathcal{M}_{\pi\cup \pi_{\bf t}}(X)^{\ZZ/2,\ZZ/2}\to \mathcal{F}^{\ZZ/2}$ is a Lagrangian immersion, as is its further restriction to the neighborhood $p(\mathcal{N})$ of $R_{\pi\cup {\bf t}}(Y,L)$.  Since $T$ and the moment map $\mu$ have the same level sets, 
\begin{equation}
\label{op1}j:p(\mathcal{N})^{\ZZ/2,\ZZ/2}\to \mathcal{M}(F)^{\ZZ/2}
\end{equation}
 is a Lagrangian immersion transverse to the level set $\mu^{-1}(0)$ of the moment map for the torus action, and therefore the composite 
$$p(\mathcal{N})^{\ZZ/2,\ZZ/2}\cap (\mu\circ j)^{-1}(0)=R_\pi(Y,L)^{\ZZ/2,\ZZ/2}\to R(S^2,\{a_i,b_i\}_{i=1}^n)^{\ZZ/2}$$ is the symplectic reduction of the Lagrangian immersion (\ref{op1}) with respect to $\mu$ and the torus action, and hence is again a  Lagrangian immersion by Corollary \ref{lagreg}.\end{proof}

\section{Adding an earring}\label{piercedear}

As before, let $L=L_1\cup\dots\cup L_n$ be an $n$-tangle in a $\ZZ$-homology ball $Y$. Following \cite{KM2}, one can associate a variant $R^\nat_\pi(Y,L)$ of $R_\pi(Y,L)$ as follows  (for more details see \cite[Section 4.3]{HHK1}). Let $D\subset Y$ be a small normal disk to the $n$th arc $L_n$ in the interior of $Y$,  obtained by pushing disk neighborhood of the endpoint $a_{n}$,   in $\partial Y$ slightly into the interior of $Y$.   Let $H=\partial D$ (we call $H$ an {\em earring}) and let $W$ be an arc on $D$ joining $L_{n}$ to $H$.  Let $w:S^1\subset Y\setminus (L\cup H\cup W)$  be a small meridian circle to $W$.  The notation is illustrated in Figure \ref{earringfig}. One can think of $H$ as a slight push in of $A_n$ and $W$ as a push in of an arc in $\partial Y$ from $a_n$ to $A_n$.

     \begin{figure}[h]
\begin{center}
\def\svgwidth{2.7in}
 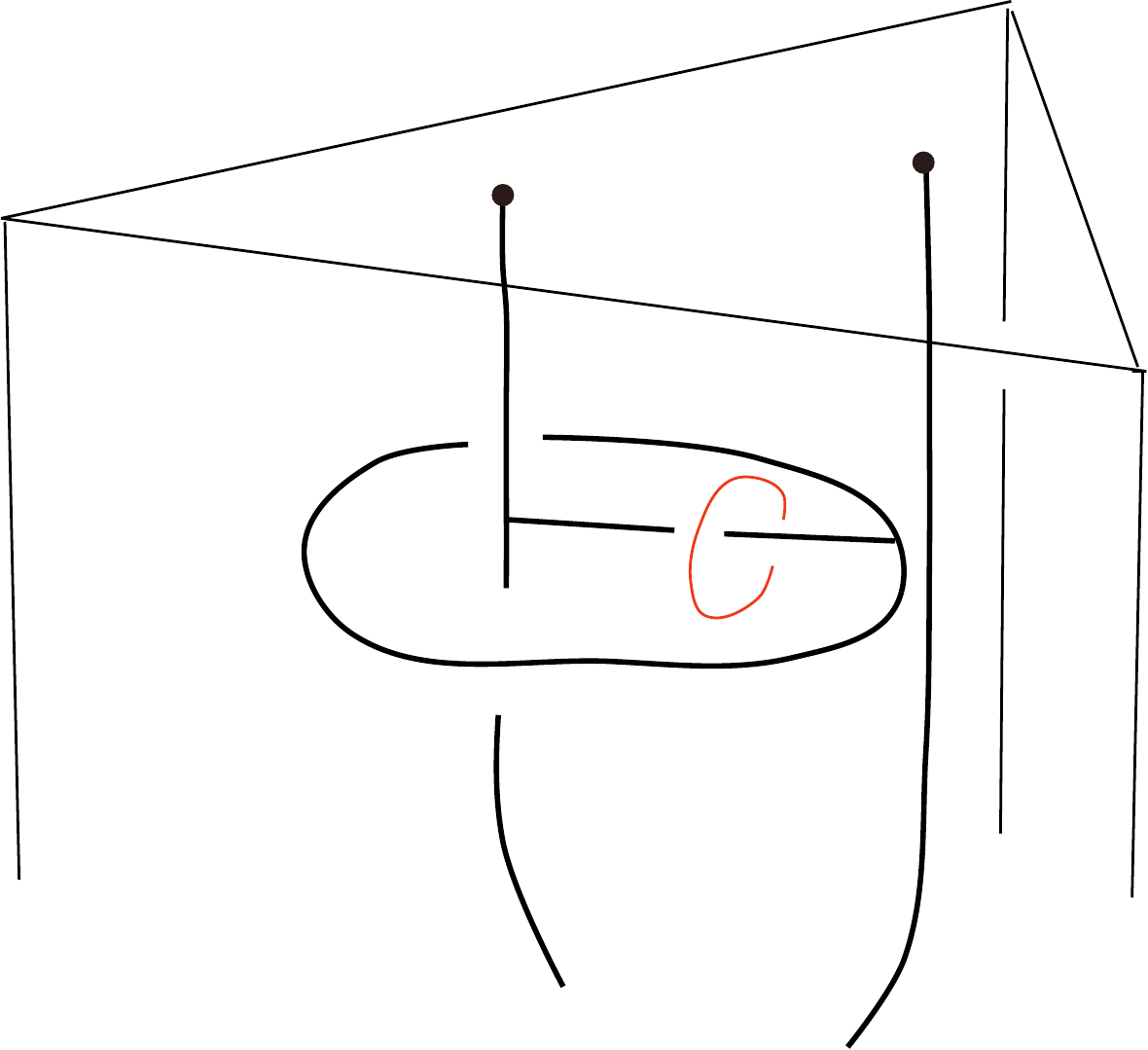
 \caption{\label{earringfig} }
\end{center}
\end{figure}

One then defines 
$R^\nat(Y,L)$ to be the space of conjugacy classes of representations
$$\rho:\pi_1(Y\setminus(L\cup H\cup W))\to SU(2)$$ satisfying
\begin{equation}
\label{ear}
\Real(\rho(m_{L_i}))=0, ~~ \Real(\rho(m_H))=0,  ~~\rho(w)=-1.
\end{equation}
More generally, given a perturbation $\pi=\{N_i,f_i\}$,  we can assume the earring misses the solid tori $N_i$ and define $R_\pi^\nat(Y,L)$  to be those $SU(2)$ representations of $\pi_1(Y\setminus(L\cup H\cup W\cup_iN_i))\to SU(2)$ satisfying the perturbation conditions (\ref{pert}) in addition to (\ref{ear}).

\begin{thm}\label{earthm} Assume $Y$ is a $\ZZ$-homology ball containing an $n$-strand tangle $L$, with $n\geq 2$.   There exist arbitrarily small perturbations $\pi$ so that  the space $R_\pi^\nat(Y,L)$ is a closed manifold and the restriction map 
$R_\pi^\nat(Y,L)\to R(S^2,\punctures)$ is an immersion into the smooth stratum $R(S^2,\punctures)^{\ZZ/2}$, and  hence misses the singular points.
 
\end{thm}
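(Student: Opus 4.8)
I would follow the same template as the proof of Theorem \ref{thm1}, with one genuine simplification and one new wrinkle. Set $X^\nat$ to be the complement of a tubular neighborhood of $L\cup H\cup W$ in $Y$; the $2n$--punctured sphere $S_0$, and hence the genus $n$ surface $F$, sit in the boundary as before, and $R^\nat_\pi(Y,L)$ is the subspace of the perturbed character variety $\mathcal{M}_\pi(X^\nat)$ cut out by the traceless conditions on the meridians of $L$ and of the earring $H$ together with $\rho(w)=-1$. By the general argument of Section \ref{tangentspaces} (the remark after Equation (\ref{compact})), $R^\nat_\pi(Y,L)$ is compact for every $\pi$. The meridian $w$ of $W$ is freely homotopic to the commutator of the earring meridian $m_H$ with the meridian $m_{L_n}$ of the strand carrying the earring, so $\rho(w)=-1$ says precisely that $\rho(m_H)$ and $\rho(m_{L_n})$ anticommute; being traceless, they are then orthogonal purely imaginary unit quaternions, so $\langle\rho(m_H),\rho(m_{L_n})\rangle$ is the quaternion group $Q_8$. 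In particular every point of $R^\nat_\pi(Y,L)$ is irreducible, there is no abelian stratum, and no analogue of the singular--point analysis of Section \ref{local} (Proposition \ref{near singular}) is needed. This is exactly why the target is a closed manifold rather than a stratified space.

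First I would invoke the natural $\nat$--analogue of Theorem \ref{herald}, obtained by the same construction of \cite{herald1}: for a suitable finite family of perturbation tori $\sqcup N_i\subset X^\nat$ disjoint from $H\cup W$, and all $\pi$ in a residual subset of a small neighborhood of $0$, the earring--constrained perturbed character variety $\mathcal{M}^\nat_\pi(X)$ (those $\rho\in\mathcal{M}_\pi(X^\nat)$ with $\rho(w)=-1$) is a smooth manifold whose restriction map $j$ to $\mathcal{M}(F)$ Lagrangian immerses it into the $(6n-6)$--dimensional symplectic manifold $\mathcal{M}(F)^{\ZZ/2}$, of the dimension ($3n-3$) that makes the trace fibre $(T\circ j)^{-1}(0)$ have dimension $2n-3$; here $T$ is the trace map of Equation (\ref{Tmap}). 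Since $\Real(\rho(m_H))=\Real(\rho(m_{L_n}))$ by conjugation invariance, the traceless condition on $H$ is already contained in $T\circ j=0$, while the anticommutativity of $\rho(m_H)$ with $\rho(m_{L_n})$ is equivalent to the vanishing of a trace around a single auxiliary curve joining the two meridians; I would fold this coordinate in alongside $T$ (this is the ``modification of $\mu$'' mentioned in the introduction). By Theorem \ref{surj} the level sets of $T$ coincide with those of a moment map $\mu$ for a Hamiltonian ${\bf T}^n$--action on $\mathcal{M}(F)^{\ZZ/2}$ whose symplectic quotient $\mu^{-1}(0)/{\bf T}^n$ is $R(S^2,\punctures)^{\ZZ/2}$.

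Next I would run the stratified general--position argument of Section \ref{main}. Using holonomy perturbations $\pi_{\bf t}$ supported on the special perturbation curves of Section \ref{snakesonaplane2}, pushed into a collar $F\times I$ disjoint from the $N_i$ and from $H\cup W$ — which by Proposition \ref{stratpreserving} and Theorem \ref{hamiltonian} (Definition \ref{bigphi}) act on $\mathcal{M}(F)^{\ZZ/2}$ by symplectomorphisms isotopic to the identity — Proposition \ref{abund2}, extended to cover the extra anticommutator coordinate, makes the collar--pushed $j$ transverse to $\mu^{-1}(0)$ along the locus with irreducible boundary restriction, while Proposition \ref{abund1} together with a Sard--theorem argument as in Lemma \ref{missE} pushes $j(\mathcal{M}^\nat_\pi(X)^{\ZZ/2,\ZZ/2})$ off the preimage of the reducible locus $R(S^2,\punctures)^{U(1)}$, eliminating all $(\ZZ/2,U(1))$ points. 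For such generic small ${\bf t}$, $R^\nat_{\pi\cup\pi_{\bf t}}(Y,L)=(\mu\circ j)^{-1}(0)$ is a smooth compact $(2n-3)$--manifold without boundary, and Corollary \ref{lagreg} (symplectic reduction of the Lagrangian immersion $j$, transverse to $\mu^{-1}(0)$) shows that the composite $R^\nat_{\pi\cup\pi_{\bf t}}(Y,L)\to \mu^{-1}(0)/{\bf T}^n=R(S^2,\punctures)^{\ZZ/2}$ is a Lagrangian immersion. Since no point of the domain is reducible, its image misses the singular points, which completes the proof.

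The step I expect to be the real work is the abundance argument in the presence of the earring. One must verify that the special perturbation curves — confined to a collar of the $S_0$--part of $F$ and disjoint from $H\cup W$ — still move the $n$ traces around the $A_i$ together with the anticommutator trace around the auxiliary curve independently, i.e. that the case analysis proving Proposition \ref{abund2} carries over with one additional coordinate, and that this auxiliary curve can be taken disjoint from the $A_i$ so that the Goldman--Jeffrey--Weitsman moment--map framework of Theorem \ref{surj} and the ensuing symplectic--reduction bookkeeping apply verbatim. A secondary point to check is that the boundary restriction of an earring representation becomes irreducible after the Lemma \ref{missE}--type perturbation — it need not be so automatically, since $\rho(w)=-1$ constrains $\rho(m_H)$ but not the $\rho(B_i)$ — which is precisely what guarantees that the image lands in the smooth stratum $R(S^2,\punctures)^{\ZZ/2}$.
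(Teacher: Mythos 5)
Your outline captures the broad strategy (convert the earring condition into trace conditions, fold them into the map whose zero set is cut out on the boundary, then run collar perturbations, abundance, Sard, and symplectic reduction), but it is set on the wrong surface, and the step you take as an input is exactly the part that has to be proved. The complement $X'$ of $L\cup H\cup W$ has boundary a genus $n+1$ surface $F'$ (the annulus around the pierced strand becomes a genus-one piece); the genus $n$ surface $F$ does \emph{not} sit in $\partial X'$, so there is no restriction map from your $\mathcal{M}^\nat_\pi(X)$ to $\mathcal{M}(F)^{\ZZ/2}$, and the ``natural $\nat$--analogue of Theorem \ref{herald}'' you invoke -- a smooth $(3n-3)$--manifold Lagrangian immersing into the genus $n$ character variety -- is not available: \cite{herald1} says nothing about the cut $\rho(w)=-1$, and the paper states explicitly that the traceless and anticommutativity conditions are not addressed there. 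The paper's device is to keep the honest boundary $F'$ (so $\dim\mathcal{M}_\pi(X')^{\ZZ/2,\ZZ/2}=3n$, $\dim\mathcal{M}(F')^{\ZZ/2}=6n$) and to re-express $\rho(w)=-1$ via the relation $[ap^{-1},h]\sim w$ and the fact that $[x,y]=-1$ iff $\Real x=\Real y=\Real(xy)=0$, as three traceless conditions along simple closed curves $h$, $ap^{-1}$, $ap^{-1}h$ on $F'$; the cut is then the $(n+3)$--coordinate map $S$ of Equation (\ref{defS}), and abundance (Proposition \ref{abund4}, an extension of Proposition \ref{abund2} to $F'$ with extra perturbation curves meeting $Y_1,Y_2$) makes $S\circ j$ transverse to $0$. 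Your count of ``one additional coordinate'' rests on the claim that $\Real(\rho(m_H))=\Real(\rho(m_{L_n}))$ by conjugation invariance, which is false: $m_H$ is the meridian of the earring circle, not conjugate to the strand meridian (it is $\rho(w)=-1$ itself that forces $\rho(m_H)$ and $\rho(ap^{-1})$ to be traceless), and with the perturbation curve inside the earring ball the conditions $\Real\rho(A_n)=0$ and $\Real\rho(ap^{-1})=0$ are genuinely different.

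The second gap is the elimination of points with abelian boundary restriction. Your plan to reuse Proposition \ref{abund1} and a Lemma \ref{missE}--type Sard argument again lives on $\mathcal{M}(F)$; an analogue for $F'$ with the earring constraints would have to be formulated and proved, with new dimension counts. The paper instead glues on a standard earring ball $(B,U)^\nat$ carrying the specific perturbation $\pi_1$ of \cite{HHK1}, whose Theorem 7.1 gives the explicit normal form (\ref{values}); this shows abelian points are absent outright and that any $(\ZZ/2,U(1))$ point restricts on the tangle side to a finite set $\mathcal{F}\subset R(S^2,\punctures)^{\ZZ/2}$, which a further collar perturbation of $(Y,L)$ (Proposition \ref{noreducibles}) pushes the image of $R_\pi(Y,L)^{\ZZ/2}$ off of. That irreducibility of the boundary restriction is then what makes the abundance argument of Proposition \ref{abund4} (and hence the final Sard/symplectic-reduction step into $R(S^2,\punctures)^{\ZZ/2}$) go through; none of this machinery is supplied by the steps you list, so as written the proposal assumes the two results that constitute the actual content of the theorem.
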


\begin{proof}
We will make use of the main result of \cite{HHK1}, Theorem 7.1, which  treats the case when $(Y,L)$ the trivial 2-tangle in the 3-ball.  Figure \ref{earring3fig} illustrates a tangle $U$ in a 3-ball $B$ with four strands, an earring $H\cup W$, and a perturbation curve $N_1$ with meridian labelled $p$ is indicated. Seven based loops in $B\setminus(U\cup H\cup W\cup_iN_i)$, $a,b,c,d,h,p,w$ are indicated.   Pick an $\ep>0$ as small as desired, and let $\pi_1$ denote the perturbation data $(N_1, \ep\sin(x))$.
 \begin{figure}[h]
\begin{center}
\def\svgwidth{4.3in}
 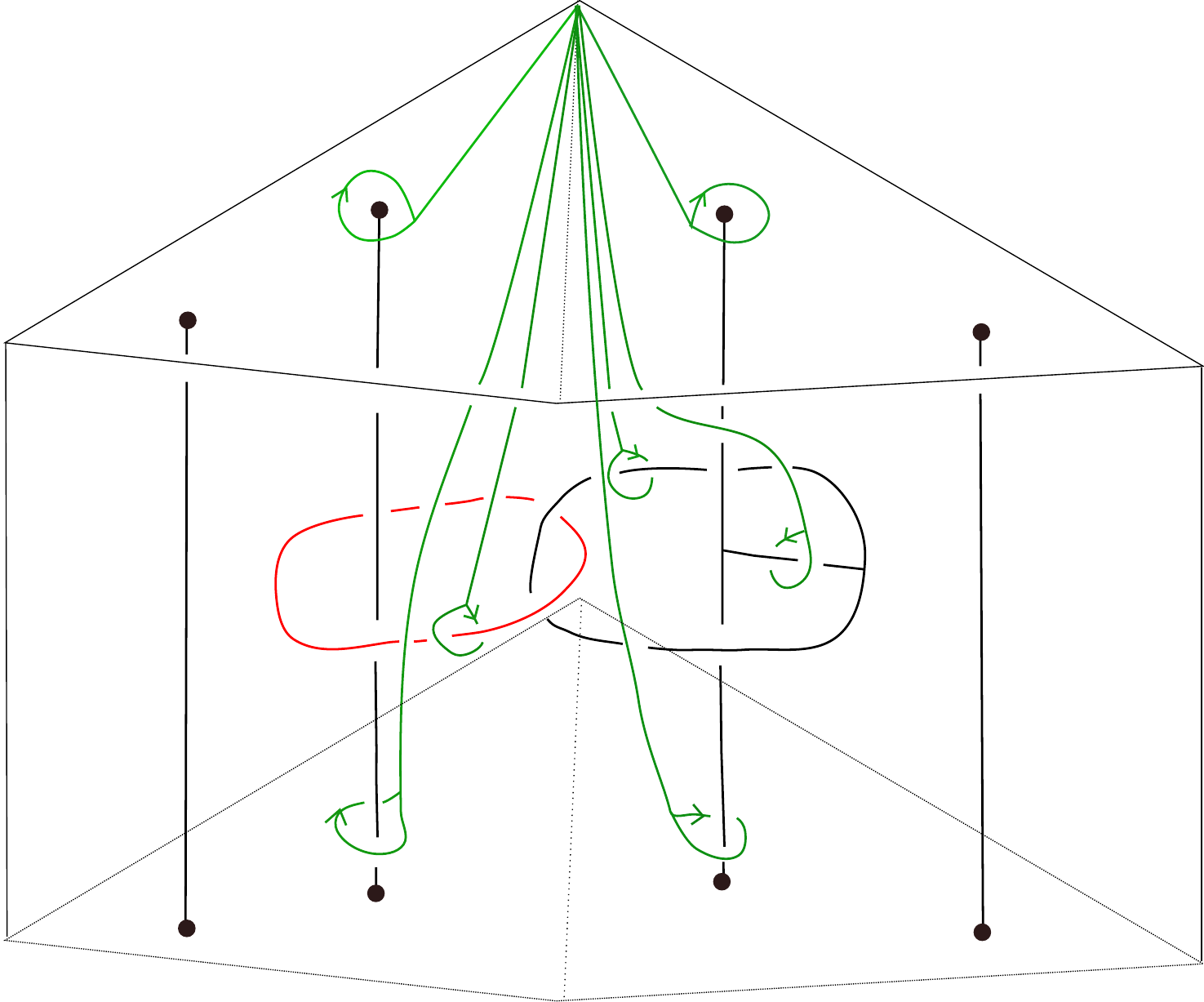
 \caption{\label{earring3fig} }
\end{center}
\end{figure} 

Denote by $(B,U)^\nat$ the tangle, with all the data as illustrated, of Figure \ref{earring3fig}.  Consider the new tangle
$$(Y',L')=(Y,L)\cup_{S_{n-1}\cup S_n}(B,U)^\nat$$
where $S_{n-1}$ and $S_n$ denote the $n$th and $(n+1)$st sectors, as defined in Section \ref{snakesonaplane} and Figure \ref{drawingfig}.
Ignoring the earring and perturbation curve, $(Y',L')$ and $(Y, L)$ are homeomorphic tangles. 

Denote by $S_0'$ the $2n$-punctured sphere in the boundary of $(Y',L')$.  Denote the corresponding generators  of $\pi_1(S_0')$ (analogously to Figure \ref{drawingfig}) by $A_1',B_1', \dots, A_n', B_n'.$  Then (using a vertical arc to connect the base points)
 \begin{equation}\label{eq11.1}
A_1'=A_1, ~B_1'=B_1,\dots, ~A_{n-2}'=A_{n-2}, ~B_{n-2}'=B_{n-2}, \mbox{ and } B_n=B_n'.
\end{equation}
With our labeling of the $A_i$ curves in the sectors,  $$A_{n-1}'=b^{-1}\text{ and } A_n'=a^{-1}.$$
Moreover, it is elementary to check that 
 \begin{equation}\label{eq11.2}
A_{n-1}=(pb)^{-1} A_{n-1} ' pb,
~ B_{n-1}=[b^{-1},p^{-1}] B_{n-1}'[b^{-1},p^{-1}]^{-1},~ A_n=[b^{-1},p^{-1}]A_n'.
\end{equation}

Let $\pi$ denote any perturbation data for $(Y,L)$ satisfying the conclusion of Theorem \ref{thm1}. The union $\pi\cup \pi_1$ is perturbation data for $(Y',L')$  so that the only perturbation curve in $(B,U)$ is $N_1$.

Theorem 7.1 of \cite{HHK1}   implies that given any $\rho\in R_{\pi\cup \pi_1}^\nat(Y',L')$, there exists a $\beta\in [0,2\pi)$,  so that, setting $\nu=\ep\sin(\beta)$ for convenience, $\rho$ has a unique representative in its conjugacy class which takes the form:
 \begin{equation}
\label{values}
\rho: a\mapsto \bbi, ~b\mapsto e^{(\beta+\nu)\bbk}\bbj,~ c\mapsto e^{(\beta-\nu)\bbk}\bbj,~d\mapsto e^{-2\nu\bbk}\bbi,~ h\mapsto-\bbj e^{-\nu\bbk},~p\mapsto e^{\nu\bbk}, w\mapsto-1.
\end{equation}
This immediately implies that $R^\nat_{\pi\cup\pi_1}(Y',L')^{U(1)}$ is empty for any $\pi$ as above.

Suppose that $\rho\in R^\nat_{\pi\cup\pi_1}(Y',L')^{\ZZ/2,U(1)}$. Hence $\rho(A'_j)=\pm \bbi$ and $\rho(B_j')=\pm \bbi$ for all $j$.   In particular
$\beta+\nu=\pm \frac\pi 2$. Since $\nu=\ep\sin \beta$, one can check that there are two such $\beta$ in $S^1$, one near each of $\pm \frac \pi 2$.

Then $\rho(A_j)=\pm \bbi$ and $\rho(B_j)=\pm \bbi$ for all $j<n-1$ and,  restricting $\rho$ to $(Y,L)$,  Equations (\ref{eq11.1}), (\ref{eq11.2}),  and (\ref{values}) imply that 
\begin{equation}
\label{eq11.3}
\rho:A_{n-1}\mapsto \pm e^{-2\nu\bbk}\bbi,~ 
B_{n-1}\mapsto \pm e^{-4\nu\bbk}\bbi,~ 
A_{n}\mapsto \pm e^{-2\nu\bbk}\bbi,~ 
B_{n}\mapsto \pm  \bbi,~ A_j,B_j\mapsto \pm \bbi, j<n-2
\end{equation}
The set $\mathcal{F}$ of all $\rho\in R(S^2,\punctures)$ satisfying (\ref{eq11.3}) is finite (in fact contains $2^{2n}$ points).  Note that $\mathcal{F}\subset R(S^2,\punctures)^{\ZZ/2}$.

\begin{prop}\label{noreducibles} There exists a residual set of perturbations $\pi_2$ in a neighborhood of the trivial perturbation, supported in a collar of $S_0$ in $(Y,L)$, so that the restriction to the boundary $R_{\pi\cup \pi_2}(Y,L)^{\ZZ/2} \to R(S^2,\punctures)^{\ZZ/2}$ misses $\mathcal{F}$. Hence $R^\nat_{\pi\cup\pi_1\cup \pi_2}(Y',L')^{\ZZ/2, U(1)}$ is empty and so $R^\nat_{\pi\cup\pi_1\cup \pi_2}(Y',L')=R^\nat_{\pi\cup\pi_1\cup \pi_2}(Y',L')^{\ZZ/2, \ZZ/2}$.
\end{prop}
\noindent{\em Sketch of proof.}  We spare the reader the description of a collection ${\bf C}$ of $K$ perturbation curves in $S_0$ and unenlightening calculations    required to show that there is a family of isotopies 
$\Phi_{\bf C}:\RR^K\times  R(S^2,\punctures)\to R(S^2,\punctures)$
so that for each $\rho\in \mathcal{F}$, the map $\Phi_{\bf C}(\rho,-):\RR^K\to R(S^2,\punctures)$ is a submersion. (This can be gleaned easily from Theorem 9.2 of \cite{HHK2} when $n=2$.  The general case uses the same approach as the proof of Proposition \ref{abund1}.)

Thus the (by now familiar) map $R_{\pi}(Y,T)^{\ZZ/2}\times \RR^K\to  R(S^2,\punctures)^{\ZZ/2}$ is transverse to $\mathcal{F}$. Since the dimensions of $R_{\pi}(Y,T)^{\ZZ/2}$ and $R(S^2,\punctures)^{\ZZ/2}$  are  $2n-3$ and $4n-6$, respectively, Sard's theorem gives a residual set of ${\bf t}\in \RR^k$  near $0$ so that $R_{\pi}(Y,T)^{\ZZ/2}\times 
\{{\bf t }\}\to  R(S^2,\punctures)^{\ZZ/2}$ misses $\mathcal{F}$. Let $\pi_2$ denote the  perturbation corresponding to such a ${\bf t}$.
\qed
\medskip

 Choose $\pi_2$ as in Proposition \ref{noreducibles}.
To simplify the notation, we henceforth use $\pi$ to again denote $\pi\cup \pi_2$. Thus, in the new notation,  
$R^\nat_{\pi_1\cup \pi}(Y',L')=R^\nat_{\pi\cup \pi_1}(Y',L')^{\ZZ/2,\ZZ/2}$, and
 $R^\nat_{\pi_1\cup \pi}(Y',L')$ is compact.
It remains to show that $R^\nat_{\pi\cup \pi_1}(Y',L')$ can be further perturbed to make it a smooth $2n-3$-manifold which Lagrangian immerses into $R((S^2)' ,\punctures)$.

\medskip

Denote by  $X'$ the complement 
$$X'=Y'\setminus (L'\cup H\cup W)=\big( (Y\setminus L)\cup_{D^2\setminus
\{a_{n-1}, b_{n-1}, a_n , b_n\}}(B\setminus (U\cup H\cup W))\big).$$ Thus $X'$ has boundary a surface $F'$ of genus $n+1$ and 
$F'$ contains the $2n$-punctured sphere $S_0$.  The perturbation $\pi$ equals $\{N_i, f_i\}_{i=2}^\ell$ for some solid tori $N_i$ in $Y\setminus L$ and $\pi_1=(N_1, \ep\sin(x))$ for $N_1\subset B\setminus  (U\cup H\cup W)$.

The relation $[ap^{-1},h]=(ha)w(ha)^{-1}$ holds in $\pi_1(X'\setminus(\cup_iN_i))$.  Hence a homomorphism $\rho:\pi_1(X'\setminus(\cup_iN_i))\to SU(2)$ satisfies $\rho(w)=-1$ if and only if $\rho([ap^{-1},h])=-1$. Since a pair $x,y$ of unit quaternions satisfy $[x,y]=-1$ if and only if $\Real(x)=\Real(y)=\Real(xy)=0$, one concludes that $\rho([ap^{-1},h])=-1$ if and only if 
$$\Real(\rho(h))=\Real(\rho(ap^{-1}))=\Real(\rho(ap^{-1}h))=0.$$
Figure \ref{tube1fig} shows that the three curves $ap^{-1}$, $ap^{-1}h$, and $h$ can be homotoped rel base point in $X'$ to simple closed curves which lie on $F'$. We denote the curves $ap^{-1}$ and $ap^{-1}h$ on $F'$ by $Y_1$ and $Y_2$, respectively.

 Define a map
$$S:\mathcal{M}(F')^{\ZZ/2}\to \RR^{n+3}$$
by 
\begin{equation}
\label{defS}
S(\rho)=\big(\Real(\rho(A_1)),\dots,\Real(\rho(A_n)),\Real(\rho(h)),\Real(\rho(ap^{-1})),\Real(\rho(ap^{-1}h))\big).
\end{equation}
 Then  $S^{-1}(0)$ is compact, since it is closed in the compact space $\mathcal{M}(F')$, and it lies in $\mathcal{M}(F')^{\ZZ/2}.$  As before, let $j:\mathcal{M}_{\pi_1\cup\pi}(X')\to \mathcal{M}(F')$ denote the restriction map. Then it follows from the definitions that $$R^\nat_{\pi_1\cup\pi}(Y',L')=(S\circ j)^{-1}({\bf 0}).$$

   \begin{figure}[h]
\begin{center}
\def\svgwidth{6in}
 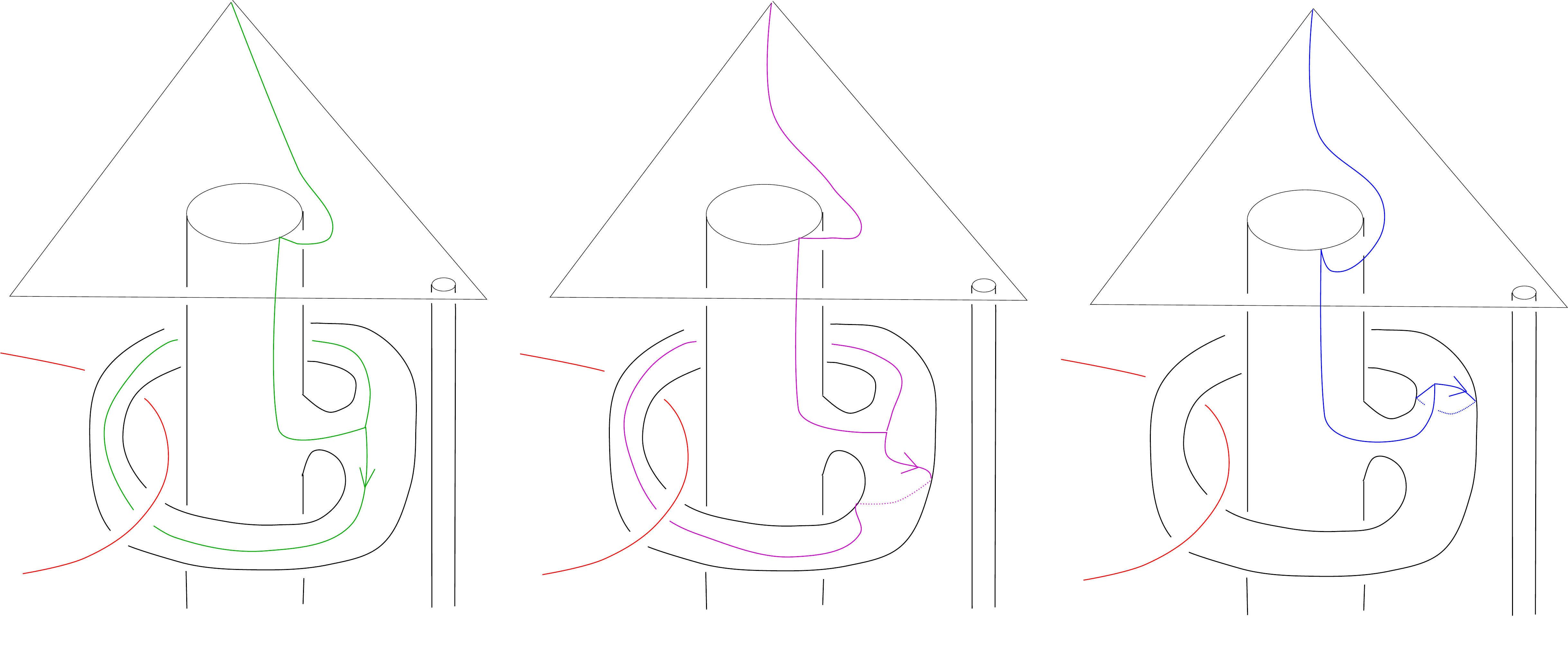
 \caption{\label{tube1fig} }
\end{center}
\end{figure}

\begin{prop}\label{abund4} There exist an integer $K$ and a collection ${\bf C}$ of $K$ perturbation curves on $F'$ inducing a $K$ parameter family of isotopies $\Phi_{\bf C,t},~t\in \RR^K$ of $\mathcal{M}(F')$  so that for each $\rho\in S^{-1}(0)$, the 
map 
$$S\circ \Phi_{\bf C,-}(\rho):\RR^K\to \RR^{n+3}$$
is a submersion near zero. 

Since $S^{-1}(0)$ is compact, there exists a neighborhood $U$ of $0$ in $\RR^K$ and a neighborhood of $S^{-1}(0)$ in $\mathcal{M}(F')$ so that $S\circ \Phi_{\bf C,-}(\rho)$ is a submersion for ${\bf t}, \rho$  in these neighborhoods.
 
\end{prop}
\noindent{\em Sketch of proof.}  That there exists a collection of perturbations such that the rank of the composite is (at least) $n+1$ is exactly Proposition \ref{abund2}, applies to $F'$ and the $n+1$ curves $A_1,\dots, A_n, h$.  Therefore, we simply need to demonstrate the existence of  extra perturbation curves  to boost the rank to $n+3$.  To  increase the rank, one considers perturbation curves that are disjoint from $A_1,\dots, A_n, h$ (although they may intersect the paths connecting them to the base point), and intersect $Y_1$ and $Y_2$ transversely in one point each.

Calculations similar to those in the proof of Proposition \ref{abund2} ensure that sufficiently many such perturbation curves can be found, provided the restriction of $\rho$ to $\pi_1(S_0)$ is irreducible. This is guaranteed by the fact that $R^\nat_{\pi_1\cup\pi}(Y',L')=R^\nat_{\pi_1\cup\pi}(Y',L')^{\ZZ/2, \ZZ/2}$.\qed

 \medskip
We can now complete the proof of Theorem \ref{earthm}.   Let $\pi_3$ be a small  perturbation such that $\mathcal{M}_{\pi_1\cup \pi\cup \pi_2}(X)$ satisfies the conclusion of Theorem \ref{herald}. Then
$\mathcal{M}_{\pi_1\cup \pi\cup \pi_2}(X)^{\ZZ/2,\ZZ/2}\to \mathcal{M}(F)^{\ZZ/2}$ is a Lagrangian immersion, and the composite
$$\mathcal{M}_{\pi_1\cup \pi\cup \pi_2}(X)^{\ZZ/2,\ZZ/2}\times \RR^K\xrightarrow{j} \mathcal{M}(F)^{\ZZ/2}\times \RR^K\xrightarrow{S\circ\Phi_{\bf C}}\RR^{n+3}$$ 
has zero as a regular value.   Then, just as in the proof of Theorem \ref{thm1}, there exists an arbitrarily small ${\bf t}\in \RR^K$ so that 
$$R^\nat_{\pi_1\cup\pi\cup\pi_2\cup\pi_{\bf t} }(Y',T') =
(S\circ \Phi_{\bf C,-}\circ j)^{-1}(0)\cap (\mathcal{M}_{\pi_1\cup \pi\cup \pi_2}(X)^{\ZZ/2,\ZZ/2}\times  \{{\bf t}\})$$
is a smooth compact  manifold (with empty boundary) which  Lagrangian immerses into $R((S^2)',\punctures)^{\ZZ/2}$.   Since $(Y,T)=(Y',T')$ as $n$-tangles, the theorem is proved. 
\end{proof}

\section{Epilogue}
We finish this article by explaining the context which motivates it. We make a few definitions, conjectures, and calculations.

Fix $n$ and let $P_n=R(S^2,\punctures)$. Consider a pair $(j_1:R_1\to P_n,j_2:R_2\to P_n)$ where $R_1$ is a smooth compact $2n-3$ manifold,  $R_2$ is a singular $2n-3$ manifold whose finitely many isolated singularities have neighborhoods which are cones on $\CC P^{n-2}$.  Assume that $j_1, j_2$ are Lagrangian immersions and that $j_2$ preserves the singular stratum and the radial direction near the singular points.

\medskip

\noindent{\bf Conjecture 1.} There is a well-defined Lagrangian-Floer theory associated to $(j_1:R_1\to P_n,j_2:R_2\to P_n)$, with homology 
$FH(j_1:R_1\to P_n,j_2:R_2\to P_n)$ depending on $j_i$ only up to Hamiltonian isotopy.

\medskip

A precise statement of Conjecture 1 would require additional restrictions, for example that $j_1, j_2$ are assumed to be {\em unobstructed} in the sense of \cite{FOOO}.  For the problems we discuss below we    want some additional structure such as a choice of Lagrangian subbundle $\Lambda\subset R(S^2,\punctures)^{\ZZ/2}$ and require $j_1$ and $j_2$ to be $\ZZ/4$ graded (in the sense of \cite{Seidel}, see \cite{HHK2}) with respect to $\Lambda$, so that $FH(R_1,R_2 )$ is a $\ZZ/4$ graded abelian group. For the special case of $n=2$, this conjecture is proved in detail in \cite{HHK2}, based on work of Abouzaid \cite{abouzaid}.

The identification of the spaces $R(S^2,\punctures)$ is unknown to the authors when $n>2$. When $n=2$ it is the pillowcase \cite{HHK1}.
\medskip
Now suppose that one is given a link $L$ in a closed oriented 3-manifold $Y$.  Assume further that a distinguished point is placed on one of the components of $L$ and that a separating 2-sphere $S^2\subset Y$ meets $L$ transversally in $2n$ points, missing the distinguished point. Call the component of $Y\setminus S^2$ containing this point $Y_1$, so that we have a decomposition 
\begin{equation}
\label{decomp}
(Y,L)=(Y_1,L_1)\cup_{(S^2, \punctures)}(Y_2,L_2).
\end{equation}
Then the results of this article show that, after placing an earring near the distinguished point and choosing appropriate small perturbations $\pi_1, \pi_2$, one obtains a pair of Lagrangian immersions.
$$R^\nat_{\pi_1}(Y_1,L_1)\to P_n\leftarrow R_{\pi_2}(Y_2,L_2)$$
\medskip

\noindent{\bf Conjecture 2.} These Lagrangian immersions are unobstructed and $\ZZ/4$ graded with respect to a certain Lagrangian field on $T_*P_n^{\ZZ/2}$, and the resulting Lagrangian-Floer homology 
$FH(R^\nat_{\pi_1}(Y_1,L_1),R_{\pi_2}(Y_2,L_2))$ is independent of $\pi_1$ and $\pi_2$ for small enough $\pi_1,\pi_2$.

\medskip

Theorem 7.1 of \cite{HHK1} shows that when $(Y_1,L_1)$ is a trivial 2-tangle, then with $\pi_1$ the perturbation curve of Figure \ref{earring3fig}, $R^\nat_{\pi_1}(Y_1,L_1)$ is a circle and the immersion 
$R^\nat_{\pi_1}(Y_1,L_1)\to P_2$ is unobstructed.  The circle is parameterized in Equation (\ref{values}) above, with a single double point, corresponding to  $\beta=0,\pi$.   It is straightforward to check that adding a single unknotted, unlinked strand to $L_1$ replaces $R^\nat_{\pi_1}(Y_1,L_1)$ by its product with $S^2$, corresponding to which traceless element the new meridian is sent to.  Hence, for a   trivial $n$-stranded tangle $(Y_1,L_1)=(D^2, n \text{ points})\times I$,
$$R^\nat_{\pi_1}(Y_1,L_1)=S^1\times (S^2)^{n-2}.$$
The restriction $R^\nat_{\pi_1}(Y_1,L_1)\to P_n$ is not an embedding.  The pair of points with the same image when $n=2$ becomes a pair of $(S^2)^{n-1}$ which map to the same $ (S^2)^{n-1}$ in $P_n$.   However, it seems likely that this immersion is unobstructed, and in fact we expect that the methods of this article may be used to find an arbitrarily small further perturbation $\pi$ so that $R^\nat_{\pi_1\cup \pi}(Y_1,L_1)\to P_n$ is an embedding.

In any case, by restricting to decompositions (\ref{decomp}) with $(Y_1,L_1)$ unknotted, one may define an invariant of $n$-tangles $(Y_2, L_2)$ taking $(Y_2,L_2)$ to $FH(R^\nat_{\pi_1}(Y_1,L_1),R_{\pi_2}(Y_2,L_2))$.  Conjecture 2 then implies the following.

\medskip

\noindent{\bf Conjecture 3.} The assignment 
$(Y_2,L_2)\mapsto FH(R^\nat_{\pi_1}(Y_1,L_1),R_{\pi_2}(Y_2,L_2))$ is a well-defined tangle invariant.
\medskip

Ample evidence of Conjecture 2 is provided in \cite{HHK2}, which builds on calculations in \cite{HHK1,FKP}, for   certain 2-tangle decompositions of  2-bridge knots and many torus knots, with $(Y_1,L_1)$ the trivial tangle.  For torus knots,  the unperturbed space $R (Y_2,L_2)$ is singular, and its singularities can be resolved by perturbations in topologically distinct ways, but in all computed examples, all sufficiently small perturbations result in isomorphic Lagrangian-Floer homology.

\medskip

The intersection of $R^\nat_{\pi_1}(Y_1,L_1)$ and $R_{\pi_2}(Y_2,L_2)$ (when transverse) is the finite, regular flat moduli space $R_\pi^\nat(Y,L)$.  This space is precisely the set of critical points of a certain $\pi$-perturbed Chern-Simons function $cs_\pi:\mathcal{A}\to S^1$, defined by Kronheimer-Mrowka \cite{KM1}, whose $\ZZ/4$ graded Morse homology they denote by $I^\nat(Y,L)$. Kronheimer-Mrowka  prove $I^\nat(Y,L)$ is an invariant of $(Y,L)$, independent of the choice of (sufficiently small) perturbation $\pi$.

It is not true in general that $FH(R^\nat_{\pi_1}(Y_1,L_1),R_{\pi_2}(Y_2,L_2))$ is independent of the  decomposition (\ref{decomp}) of  $(Y,L)$. It seems likely, however,  based on calculations,  that it depends only on $(Y,L)$  provided the fundamental groups of $Y_i\setminus L_i,~i=1,2$ are free. This happens, for example, if both $(Y_1,L_1)$ and $(Y_2,L_2)$ are trivial tangles (that is, (\ref{decomp}) is a $n$-bridge presentation of a link $L$ in $S^3$). Moreover, one can, in certain special cases, establish exact triangles for $FH(R^\nat_{\pi_1}(Y_1,L_1),R_{\pi_2}(Y_2,L_2))$ associated to skein triples $L_+, L_-, L_0$, and so it seems reasonable to conjecture:   
 \medskip

\noindent{\bf Conjecture 4.} For some class of nice decompositions  (\ref{decomp}), including   $n$-bridge decompositions   of  links in $S^3$, the assignment of the $\ZZ/4$ graded homology
$$(Y,L)\mapsto FH(R^\nat_{\pi_1}(Y_1,L_1),R_{\pi_2}(Y_2,L_2))$$ is a well-defined topological invariant of $(Y,L)$.

\medskip

When $(Y_2,L_2)$ is the trivial $n$-tangle, $R(Y_2,L_2)$ satisfies the conclusion of Theorem \ref{thm1} without the need for perturbations. In fact, $R(Y_2,L_2)=(S^2)^{n-1}/S^1$, where $S^1$ acts diagonally and by rotation on each factor. The $2^{n-1}$ singular points correspond to the representations which send each $A_i$ to $\pm\bbi$. The restriction map $R(Y^2,S^2)\to R(S^2,\punctures)$ is an embedding.

\medskip

Since all the perturbations considered in this article are holonomy perturbations, which apply to the Chern-Simons function as well,  the following  {\em Atiyah-Floer}-type  conjecture implies Conjecture 4, and would provide a link between the analytical approach of \cite{KM1,KM2} and the algebraic approach of the current article and \cite{HHK1,HHK2}.

\medskip

\noindent{\bf Conjecture 5.} For some class of nice decompositions  (\ref{decomp}), including  $n$-bridge  decompositions of  links in $S^3$, the assignment of the $\ZZ/4$ graded homology
$$(Y_2,L_2)\mapsto FH(R^\nat_{\pi_1}(Y_1,L_1),R_{\pi_2}(Y_2,L_2))$$ is naturally isomorphic to  $I^\nat(Y,L)$.

\medskip

Conjecture 5 is true for the 2-bridge knots and certain decompositions of torus knots, as described in \cite{HHK2}. We refer the reader to that article for examples, as well as for concrete illustrations of the   theorems in the present article. 

 In this context, we mention the article  \cite{JR} by Jacobsson and Rubinsztein  which takes a  different approach  towards the construction of a Lagrangian-Floer theory for knots and links in $S^3$  using traceless $SU(2)$  representation varieties.  In that article they work in the {\em extended traceless moduli space} of a $2n$-punctured 2-sphere, defined in \cite{GHLW} as a subset of the character variety  of the surface obtained by adding one more  puncture. 
To an element of the braid group, Jacobsson-Rubinsztein associate the graph of its induced action on this extended moduli space. This graph is Lagrangian, and by pairing it with the graph associated to the trivial braid, they produce a pair of Lagrangians of the
extended moduli space associated to a braid presentation of a knot or link.


\end{document}